\newtheorem{theorem}{Theorem}[section]
\newtheorem{lemma}[theorem]{Lemma}
\newtheorem{proposition}[theorem]{Proposition}
\newtheorem{corollary}[theorem]{Corollary}
\newtheorem{remark}[theorem]{Remark}
\definecolor{bbm}{RGB}{51,153,0}
\definecolor{above}{RGB}{128,0,128}
\definecolor{below}{RGB}{102,0,204}
\definecolor{cascade}{RGB}{204,0,0}
\definecolor{iid}{RGB}{153,51,0}
\def\paragraph#1{\noindent \textbf{#1}}
\numberwithin{equation}{section}
\def\Var{\mathop{\rm Var}\nolimits}
\def\Cov{\mathop{\rm Cov}\nolimits}
\def\<{\langle}
\def\>{\rangle}
 \def \ba {\begin{array}}
 \def \ea {\end{array}}
 \newcommand{\be}{\begin{equation}}
 \newcommand{\ee}{\end{equation}}
\newcommand{\bea}{\begin{eqnarray}}
 \newcommand{\eea}{\end{eqnarray}}
\def\TH(#1){\label{#1}}\def\thv(#1){\ref{#1}}
\def\Eq(#1){\label{#1}}\def\eqv(#1){(\ref{#1})}
 \def \1{\mathbbm{1}}
\def\wt {\widetilde}
\def\wh{\widehat}
\newcommand{\bbG}{\mathbb{G}}
\newcommand{\bbH}{\mathbb{H}}
\newcommand{\bbL}{\mathbb{L}}
\newcommand{\bbT}{\mathbb{T}}
\newcommand{\bbE}{\mathbb{E}}
\newcommand{\bbW}{\mathbb{W}}
\newcommand{\bbV}{\mathbb{V}}
\newcommand{\bbU}{\mathbb{U}}
\newcommand{\bbN}{\mathbb{N}}
\newcommand{\bbZ}{\mathbb{Z}}
\newcommand{\bbR}{\mathbb{R}}
\newcommand{\ol}{\overline}
\newcommand{\ul}{\underline}
\newcommand{\cI}{\mathcal I}
\newcommand{\cG}{\mathcal G}
\newcommand{\cE}{\mathcal E}
\newcommand{\cN}{\mathcal N}
\newcommand{\cF}{\mathcal F}
\newcommand{\rmc}{{\rm c}}
\newcommand{\rmd}{{\rm d}}
\newcommand{\rme}{{\rm e}}
\newcommand{\rmP}{{\rm P}}
\newcommand{\rmQ}{{\rm Q}}
\newcommand{\rmE}{{\rm E}}
\newcommand{\rmVar}{{\rm Var}}
\newcommand{\rmCov}{{\rm Cov}}
\newcommand{\sgn}{\mathop {\rm sgn}}
\newcommand{\eqd}{\overset{\rm d}{=}}
\begin{document}

 %%%%%%%%% TITLE PAGE %%%%%%%%%%%%%%%%%%%%%%%%%%

 \title[DGFF Subject to a hard wall / bounds]
{Gaussian Free Field on the Tree Subject to a Hard Wall \\I: BOUNDS}
\author[M. Fels]{Maximilian Fels}
\address{M. Fels, 
	Technion - Israel Institute of Technology. Haifa, 3200003,
	Israel.}
\email{felsm@campus.technion.ac.il}
\author[L. Hartung]{Lisa Hartung}
\address{L. Hartung, Universit\"at Mainz. 
	Staudingerweg 9,
	55128 Mainz, Germany.}
\email{lhartung@uni-mainz.de}
\author[O. Louidor]{Oren Louidor}
\address{O. Louidor,
	Technion - Israel Institute of Technology. Haifa, 3200003,
	Israel.}
\email{oren.louidor@gmail.com}

\date{\today}

\begin{abstract}
This is the first in a series of two works which study the discrete Gaussian free field on the binary tree when all leaves are conditioned to be positive. In this work, we obtain sharp asymptotics for the probability of this ``hard-wall constraint'' event, and identify the repulsion profile followed by the field in order to achieve it. We also provide estimates for the mean, fluctuations and covariances of the field under the conditioning, which show that in the first log-many generations the field is localized around its mean. These results are used in the sequel work~\cite{Work2} to obtain a comprehensive asymptotic description of the law of the field under the conditioning.
\end{abstract}

\maketitle

\tableofcontents

\section{Introduction and results}
\subsection{Setup, motivation and previous results}
The discrete Gaussian free field (DGFF) on the infinite binary tree $\bbT$ rooted at $0$ (with Dirichlet boundary conditions) is a centered Gaussian process $h = (h(x) :\: x \in \bbT)$ with covariances given by
\begin{equation}
\label{e:01.1}
\rmE\big[h(x) h(y)\big] = \frac12 \big(|x| + |y|) - \rmd_{\bbT}(x,y)\big) = |x \wedge y|\,.
\end{equation}
Above $\rmd_{\bbT}$ is the graph distance on the tree, $|x| := \rmd_{\bbT}(x,0)$ is the depth of $x \in \bbT$ and $x \wedge y$ denotes the deepest common ancestor of $x,y \in \bbT$. We shall often also write $[x]_k$ for the ancestor of $x$ in generation $k \leq |x|$.
Alternatively $h$ can be seen as a branching random walk (BRW) with fixed binary branching and standard Gaussian steps, in which case we shall use the term generation instead of depth. We shall refer to $h$ as both a DGFF and a BRW interchangeably.

Denoting by $\bbT_n$, resp. $\bbL_n$, the sub-graph of $\bbT$ which includes all vertices at depth at most, resp. equal,  to $n \geq 0$, the goal of this manuscript and the follow-up work~\cite{Work2} is to study the realization by the field of the event
\begin{equation}
\label{e:1}
\Omega_n^+ := \big\{ h(x) \geq 0 :\: x \in \bbL_n \big\} \,.
\end{equation}
The motivation for studying this event, comes from the area of random surfaces. Indeed, $h$ is the $\bbV = \bbT$, $\bbU = \{0\}$ case of the general class of discrete Gaussian free fields on (finite, non-empty) graph $\bbV$ with Dirichlet boundary conditions on $\bbU \subset \bbV$, whose laws can be expressed in terms of the Gibbs formulation:
\begin{equation}
\label{e:1.3a}
\begin{split}
	\rmP_{\bbV, \bbU}(\rmd h) = \frac{1}{Z_{\bbV, \bbU}} \exp \bigg(-\frac12 \big\|\nabla h\big\|_2^2 \bigg) \, 
	\prod_{x \in \bbU} \delta_0 \big(\rmd h(x)\big)\, \prod_{x \in \bbV \setminus \bbU} \rmd h(x) \,.
\end{split}
\end{equation}
Above $\nabla h(x) = \big(h(y) - h(x) :\: y \sim x\big)$, where $\sim$ is the neighboring relation on $\bbV$ and $Z_{\bbV, \bbU}$ is the normalizing constant.

When $\bbV \subset \bbZ^d$, the value of $h(x)$ can be interpreted as the height at $x \in \bbV$ 
of a random discrete surface or interface which lies above/below a domain $\bbV$. In this context, the event
\begin{equation}
	\Omega_{\bbW}^+ = \big\{h(x) \geq 0 :\: x \in \bbW \big\} \,,
\end{equation}
can be seen as reflecting the existence of a hard wall, which prevents the interface from penetrating the wall placed at the domain $\bbW$. The effect of this {\em hard wall constraint} constitutes a major research direction in the study of Gibbs-gradient fields (namely, random interfaces whose law is governed by their gradient; also known as Ginzburg-Landau fields), of which the discrete Gaussian free field is a canonical and prominent member (see, e.g., \cite{dembo2005survey, giacomin2001survey, bolthausen2002survey, velenik2006survey} for excellent surveys on the subject).

The most challenging, yet arguably physical, dimension is two.
This was treated in the seminal work of Bolthausen, Deuschel and Giacomin~\cite{bolthausen2001entropic}, who considered the DGFF on 
$\ol{\bbV_N}$ with zero boundary conditions at $\partial \bbV_N$, where
$\bbV_N := N V \cap \bbZ^2$ for $V = [0,1]^2 \subset \bbR^2$ and $N \in \bbN$. Here and after we shall denote by $\partial \bbU$ the outer boundary of $\bbU$ in $\bbZ^d$ and $\ol{\bbU} := \bbU \cup \partial \bbU$. Taking an open $W \subset V$ with smooth boundary and (to avoid boundary effects of a different probabilistic nature) positive distance to $\partial V$ and letting $\bbW_N = N W \cap \bbZ^2$, they showed 
\begin{equation}
\label{e:01.5}
\frac{\log \rmP_{\ol{\bbV}_N, \,\partial \bbV_N} \big(\Omega_{\bbW_N}^+)}{(\log N)^2} \underset{N \to \infty} \longrightarrow -4g\, {\rm cap}_V(W) \,,
\end{equation}
where $g=2/\pi$ and ${\rm cap}_V(W)$ is the relative capacity of $W$ with respect to $V$, namely
\begin{equation}
\label{e:101.6}
	\inf \Big\{\tfrac12 \|\nabla f\|_2^2 :\: f \in \bbH^{1}_0(V)\,,\,\, f_W \geq 1 \Big\}\,.
\end{equation}
Above $\bbH^1_0$ is the Sobolev space of functions with a square integrable weak derivative, vanishing at $\partial V$, and here and after $\varphi_A$ denotes the restriction of a function $\varphi$ to a subset $A$ of its domain.

The paper also shows that
\begin{equation}
\label{e:01.7}
	\sup_{x \in \bbW_N} \rmP \Big(\big|h(x) - 2\sqrt{g} \log N \big| > \epsilon \log N \,\Big|\, \Omega_{\bbW_N}^+ \Big) \underset{N \to \infty}\longrightarrow 0 \,.
\end{equation}
This shows that under the positivity conditioning, the field does not only stay above the wall but is, in fact, repelled away from it by $2\sqrt{g} \log N$ on first order. As the authors explain (and the proof shows), this repulsion is an entropy effect. Under the constraint, the field ``makes room'' for its global minima to stay positive, which unconditionally obeys
\begin{equation}
\label{e:01.8}
\frac{-\min_{x \in \bbW_N} h(x)}{\log N} \underset{N \to \infty}{\overset{\rmP_{\ol{\bbV}_N, \,\partial \bbV_N}}\longrightarrow} 2 \sqrt{g} \,.
\end{equation}
Above the limiting arrow denotes convergence in probability.

While (as far as the authors are aware of) the above first order results remain the state-of-the art concerning the hard-wall problem in two dimensions, considerable progress has been made in the theory of extreme order statistics for the field in this case. In particular, a more recent seminal work of Bramson, Ding and Zeitouni~\cite{BraDiZei16} improved upon~\eqref{e:01.8} by showing
\begin{equation}
\label{e:01.9}
-\min_{x \in \bbV_N} h(x) - m^{(\bbZ^2)}_N \underset{N \to \infty}{\overset{\rmd}\longrightarrow	}
	G \oplus \tfrac{\sqrt{g}}{2} \log Z^{(\bbZ^2)}
\end{equation}
where
\begin{equation}
\label{e:01.10}
m^{(\bbZ^2)}_N := 2\sqrt{g} \log N- \tfrac{3}{4} \sqrt{g} \log \log N \,,
\end{equation}
$G$ has a Gumbel law with rate $2/\sqrt{g}$ and $Z^{(\bbZ^2)} \in (0,\infty)$ is a non-trivial random variable, which is distributed as the total mass of the so-called Liouville quantum gravity measure associated with the continuous Gaussian free field on $[0,1]^2$. Above $\oplus$ stands for the sum of two independent random variables.
The asymptotic joint law of all min-extremes of the field, which is commonly recorded via the so-called (full) extremal process of the field, was derived by Biskup and the last author in~\cite{BisLou16, BisLou18, BisLou20}.

As the extremal behavior of the DGFF in 2D is now essentially fully understood, one hopes that sharper results beyond those in~\eqref{e:01.5},~\eqref{e:01.7} for the hard wall problem could be obtained. Indeed, as was argued above, the statistics of extremes constitute a determining factor in the realization of this event. This paper can be thought of as a step in this direction, albeit for the tree graph instead of a dimensional domain.

While seemingly very different at first sight, the DGFF on two dimensional domains and that on the tree graph $\bbT$ bare substantial similarity. As both are centered Gaussian fields, this can be seen by comparing covariances, which in the 2D case are given by 
\begin{equation}
	\rmE_{\bbV_N, \partial \bbV_N} h(x) h(y) = G_{\bbV_N, \partial \bbV_N}(x,y) 
	= g \big(\log N - \log \|x-y\|_2\big) + O(1)
	\ , \quad x,y \in \bbV_N \,.
\end{equation}
Above $G_{\bbV_N, \partial \bbV_N}$ is the (negative) discrete Green function on $\bbV_N$ (w.r.t. the simple random walk on $\bbZ^2$), and the second equality holding ``in the bulk'' of $\bbV_N$.
Indeed, for $x,y \in \bbL_n$, the covariance in~\eqref{e:01.1} compares to the above if one replaces $n$ by $\log_2 N$ (for $N=2^n$) and $\rmd_{\bbT}$ by the logarithm of the Euclidean distance. To see that the latter are analogous, one may embed dyadically the vertices of $\bbL_n$ in the interval $[0,N) \cap \bbZ$. Then for ``typical'' pairs $x,y \in \bbL_n$, 
\begin{equation}
	\frac12 \rmd_{\bbT}(x,y) \cong \log_2 \|{\rm x}-{\rm y}\|_2 \,,
\end{equation}
with ${\rm x}$, ${\rm y}$ on the right being the corresponding points of $x$,$y$ in $[0,1]$ under the embedding.
The DGFF on planar domains and on the tree are thus both instances of a large class of fields with correlations which decay logarithmically, or approximately logarithmically, with the distance between test points, under some intrinsic metric on the domain. There is a growing body of works showing the universality of extreme value behavior for such fields. See, e.g.~\cite{DRZ17, LP19, SZ24}.

The advantage of working with the tree is that the geometry generated by the graph distance is much simpler to analyze than that of the Euclidean one, while at the same time, the extreme value theory for this graph is equally advanced. In particular, preceding the work~\cite{BraDiZei16}, the following tree analog of~\eqref{e:01.9} was shown by Aidekon~\cite{AidekonBRW13} in another seminal paper in this area:
\begin{equation}
\label{e:01.13}
-\min_{x \in \bbL_n} h(x) - m_n
\underset{n \to \infty}{\overset{\rmd} \longrightarrow}
	G \oplus c_0^{-1} \log Z \,,
\end{equation}
where
\begin{align}\label{Lisa.1}
m_n=c_0 n- \tfrac{3}{2}c_0^{-1}\log n \quad ; \qquad c_0=\sqrt{2\log 2} \,,
\end{align}
$G$ has a Gumbel law with rate $c_0$ and $Z \in (0,\infty)$ is now the limit of the so-called derivative martingale associate with $h$.

Asymptotics for the probability of the event $\Omega_n^+$ on the tree was treated recently by both Chen and He~\cite{Chen2018} and Roy~\cite{Roy_2024}. In~\cite{Chen2018}, the authors study the lower deviation events 
\begin{equation}
\label{e:01.14}
	\Omega_n(u) := \Big\{	\min_{\bbL_n} h \geq -m_n + u \Big\}
	\quad ; \qquad u \in \bbR
\end{equation}
at $u = O(n)$, for various types of BRWs, including that in the present manuscript. Their work shows
\begin{equation}
\label{e:01.16}
	-\log \rmP(\Omega_n(u)) = \frac12 u^2 \,(1+o(1)) \,,
\end{equation}
for $u = O(n)$ but tending to $\infty$ with $n$. In particular, this gives
$-\frac12 m_n^2 \,(1+o(1))$ as the asymptotics for the logarithm of the probability of $\Omega_n^+$, which is the tree version of~\eqref{e:01.5}.

A sharper version,
\begin{equation}
\label{e:01.17}
	-\log \rmP(\Omega_{n}^+) = \frac12 
	\big(m_n - c_0 \log_2 n\big)^2 + \Theta(n) \,,
\end{equation}
with $\Theta(n)/n$ - a bounded sequence in $n$, was given in~\cite{Roy_2024}, in which it is also shown that
\begin{equation}
\label{e:01.18}
m_n - 3c_0 \log n \leq 
	\rmE \big( h(x) \,\big|\, \Omega_n^+\big) \leq m_n - c_0 \log n\,,
\end{equation}
for $x \in \bbL_n$.
The second result shows, in analog to~\eqref{e:01.7}, that a similar entropic repulsion phenomenon occurs also in the case of the tree. However, it also already shows that the repulsion level is strictly smaller (with the difference tending to $\infty$ with $n$) than the height of the unconstrained minimum. 

\subsection{Results}
This manuscript is the first in a series of two works, in which we aim to improve upon the above results, by providing a rather precise description of the field $h$ subject to the hard wall constraint $\Omega_n^+$. This program begins in the present work with the derivation of fairly strong versions of~\eqref{e:01.16} and~\eqref{e:01.17}. These are in turn used to obtain sharp bounds on the mean, covariances and tails of the field under $\Omega_n^+$, in particular improving upon~\eqref{e:01.18} by ``nailing'' the repulsion up to $O(1)$. Building on these results, the second work~\cite{Work2} provides a relatively broad asymptotic picture of the statistics of the field under the conditioning on $\Omega_n^+$, both locally, namely the conditional law of the field in a neighborhood of a vertex, and globally, namely the law of global observables such as the field global maximum or minimum.

Turning to the statements themselves, the first result gives sharp asymptotics for the right tail event $\Omega_n(u)$ of the minimum. For what follows, for $s > 0$, we let
\begin{equation}
\label{e:1.7}
[s]_2 := \log_2 s - \lfloor \log_2 s \rfloor \,,
\end{equation}
and set $[s]_2 \equiv 0$ if $s \leq 0$.
\begin{theorem}\label{t:1.1}
There exists a bounded function $\theta: [0,1) \to \bbR$ such that for $u \in \bbR$, $n \geq 1$,
\begin{equation}
\label{e:1.3}
-\log \rmP \big(\Omega_n(u)) = 
	\frac{\big(u^+- c_0 \log_2 (u \vee 1)\big)^2}{2} + \theta_{[u]_2}u + o(u^+) \,,
\end{equation}
where $o(u)/u \to 0$ as $u \to \infty$ uniformly in $n$ such that $u \leq 2^{\sqrt{n}}$.
Moreover, 
\begin{equation}
-\frac{\rmd}{\rmd u} \log \rmP \big(\Omega_n(u)) = u^+- c_0 \log_2 (u \vee 1) + O(1)\,,
\end{equation}
where the $O(1)$ term is bounded uniformly in $n$ and $u$ such that $u \leq 2^{\sqrt{n}}$.
\end{theorem}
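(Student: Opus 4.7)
The plan is to establish matching upper and lower bounds for $F_n(u) := -\log \rmP(\Omega_n(u))$ of the desired form, and then deduce the derivative bound. The core idea is a multi-scale analysis at an intermediate generation $k^* \asymp \log_2 u$, which is the scale at which the typical fluctuation of $\min_{\bbL_{k^*}} h$ (of order $c_0 k^*$) becomes of order $u$. Beyond this scale, the sub-BRW evolution contributes the remaining $-m_{n-k^*}$ drift typically, so reaching the target $\min_{\bbL_n} h \geq -m_n + u$ becomes essentially a typical event for each sub-BRW; the main large-deviation cost is therefore incurred in the first $k^*$ generations.

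For the upper bound, I would condition on the field at generation $k^*$ and use independence of the $2^{k^*}$ sub-BRWs rooted there to write
\begin{equation*}
\rmP(\Omega_n(u)) = \rmE\Big[\prod_{|x|=k^*} \tilde p_{n-k^*}\big(u - h(x) + m_{n-k^*} - m_n\big)\Big]\,,
\end{equation*}
with $\tilde p_k(v) := \rmP(\min_{\bbL_k} h \geq -m_k + v)$. Plugging in tight tail estimates for $\tilde p_{n-k^*}$ at moderate argument and evaluating the Gaussian integral via Laplace/saddle-point analysis, the dominant contribution comes from configurations where the field follows a lifted profile with $h(x) \approx u - c_0 k^*$ for every $x \in \bbL_{k^*}$, giving Gaussian cost $\exp\big(-(u-c_0 \log_2 u)^2/2\big)$ to leading order. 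For the lower bound, I would construct an explicit event in which the field is forced to follow such a lifted profile by a Bramson-style barrier conditioning on generations $[0, k^*]$, then use sub-BRW independence together with Aidekon's tail~\eqref{e:01.13} to bound each sub-BRW's positivity-type probability away from zero.

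The $\theta_{[u]_2} u$ term arises from the arithmetic mismatch between $u$ (continuous) and $k^*$ (integer): the optimal $k^*$ is roughly $\lfloor \log_2 u \rfloor$, so the optimised cost is a piecewise-smooth function of $\log_2 u$ with a periodic modulation of order $u$. For the derivative bound, I plan to combine two inputs: first, log-concavity of $u \mapsto \rmP(\Omega_n(u))$ (since $\Omega_n(u)$ is a translate of a convex subset of $\bbR^{\bbT_n}$ by $u\mathbf{1}$, Pr\'ekopa-Leindler applies and $F_n$ is convex in $u$); second, a direct analysis of the right-derivative via the above decomposition, expressing $-\frac{\rmd}{\rmd u}\log \rmP(\Omega_n(u))$ as a Palm-type conditional expectation of a functional of the field near the boundary, which one can then control to $O(1)$ by rerunning the same multi-scale analysis.

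The main obstacle I anticipate is controlling the sub-BRW tails $\tilde p_{n-k^*}(v)$ uniformly in $v$ over a range much wider than the classical Aidekon regime $v = O(1)$ --- in particular for $u$ as large as $2^{\sqrt n}$. Sharp control of these tails requires Bramson-style barrier/ballot estimates throughout the moderate-deviation regime, with careful uniformity in both $n-k^*$ and $v$, and is likely to occupy a substantial portion of the proof.
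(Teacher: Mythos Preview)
Your high-level architecture is right: the paper also conditions on generation $l_u := \lfloor \log_2 u \rfloor$ and writes $\rmP(\Omega_n(u)) = \rmE_{l_u}\prod_{x\in\bbL_{l_u}} p_{n-l_u}(u_{l_u}-h(x))$ with $u_{l_u} = u - m_n + m_{n-l_u}$. But the execution differs in two places where your proposal has genuine gaps.

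\textbf{The $\theta_{[u]_2}u + o(u)$ term.} Separate upper and lower bounds by Laplace and barrier methods will recover the leading $\tfrac12(u-c_0\log_2 u)^2$, but are unlikely to match at the next order $\theta_{[u]_2}u + o(u)$: a barrier construction typically loses $O(u)$, not $o(u)$. The paper instead applies a single Cameron--Martin tilt by the discrete-harmonic profile $\mu$ with $\mu(0)=0$, $\mu|_{\bbL_{l_u}}=u_{l_u}$, obtaining
\[
p_n(u)=\rme^{-\frac{u_{l_u}^2}{2(1-2^{-l_u})}}\,\rmE_{l_u}\exp\sum_{x\in\bbL_{l_u}} g_n(h(x)),\qquad g_n(s)=\log p_{n-l_u}(-s)-\tfrac{u_{l_u}}{2^{l_u}-1}s,
\]
so that $h$ is again an untilted DGFF and the product sees $p_{n-l_u}$ only at \emph{typical} arguments. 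The linear term then comes from a free-energy statement (Proposition~\ref{l:3.2}): $2^{-l_u}\log\rmE_{l_u}\exp\sum_{x} g_n(h(x))\to G^*$ whenever $g_n\to g$ uniformly on compacts with $g$ continuous and $\sup_n g_n(s)\to-\infty$ at $\pm\infty$. This is the key lemma producing $o(u)$ error, and you have no analogue of it.

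\textbf{The $O(1)$ derivative.} Your Pr\'ekopa--Leindler observation is correct, but convexity together with $F_n(u)=\tfrac12(u')^2+O(u)$ only yields $F_n'(u)=u'+O(\sqrt u)$ via difference quotients; it cannot give $O(1)$. The paper instead derives the exact identity
\[
-\frac{\rmd}{\rmd u}\log p_n(u) = \frac{1}{1-2^{-l_u}}\,\rmE_n\big(h(x)\,\big|\,\Omega_n(u)\big),\qquad x\in\bbL_{l_u},
\]
by differentiating the Gibbs density, and then shows $\rmE_n(h(x)\mid\Omega_n(u))=u'+O(1)$ (Proposition~\ref{p:3.1}). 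The lower bound on this conditional mean is an FKG comparison with an explicit Gaussian; the upper bound, however, passes to a \emph{different} representation --- restricting $h$ to the single branch $([x]_k)_{k\le l_u}$ under $\rmP_n(\cdot\mid\Omega_n(u))$, which becomes a random walk under a pinning potential --- and invokes an exponential-tightness result for such walks (Proposition~\ref{p:2.14}). This branch-walk machinery is the substance of the derivative bound and is not present in your plan; ``rerunning the multi-scale analysis'' on a Palm functional does not get there.

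On your stated obstacle: the Cameron--Martin tilt is precisely what removes the need for sharp control of $p_{n-l_u}(v)$ over a wide range of $v$. After tilting only $p_{n-l_u}(-h(x))$ with $h(x)$ of typical size $O(\sqrt{l_u})$ appears, for which the coarse a-priori bounds of Lemma~\ref{l:4.2a} already suffice.
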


\subsubsection{Remarks on Theorem~\ref{t:1.1}}
Let us make several remarks concerning the theroem.
\begin{enumerate}
\item 
The two parts of the theorem offer different formulations for the asymptotics of the right tail event, which do not follow from one another. While the first one is perhaps more natural, it is the second one, where the error is expressed in terms of the derivative, that constitutes a crucial input, both for controlling the covariances (Theorem~\ref{t:1.4}) and for the derivation of the asymptotic law of the field under the conditioning~\cite{Work2} (see Subsection~\ref{s:1.3} here and Subsection~\ref{2@s:1.4} in~\cite{Work2}).

\item It is not clear whether the dependence on $[u]_2$ in the constant behind the linear term in~\eqref{e:1.3} is indeed there. This dependence arises as a result of the proof which involves studying the field in the first $\lfloor \log_2 u \rfloor = \log_2 u - [u]_2$ generations. It is therefore an artifact of the discreteness of the domain (or time evolution) of our model. Nevertheless, as $\theta_{[u]_2}$ is a sum of terms, some of which are only defined implicitly in the proof, we could not rule out (or in) that this is a fixed constant. The question of whether the discreteness of the model survives this way in the limit of the right tail of the centered minimum is thus left open. We note that thanks to the continuity of the limiting law of the right tail (see, e.g., Lemma~\ref{l:2.9a}), $\theta_{[u]_2}$ is continuous in $u$.

\item The restriction on $u$ in relation to $n$ is not sharp and the argument can be easily pushed to show that the asymptotics in both statements hold uniformly whenever $u \leq 2^{n^{1-\epsilon}}$ for any $\epsilon > 0$. Nevertheless, as we are after the case $u = m_n$, we did not attempt to give the most general statement. We note, however, that for $u$ which is of the order $2^n$ or larger, we do not expect such asymptotics to hold, as the field does not have the $\lfloor \log_2 u \rfloor$ generations required to optimally rise to level $u + \Theta(1)$ (see Subsection~\ref{s:1.3}). 	
\item The asymptotics of the right tail event, and consequently the results that follow, would have been very different had we allowed random branching. Indeed, then a possible way of increasing the minimal height of all particles is by suppressing all branching until a late time. As our motivation comes from the DGFF point-of-view of this model, we did not study such case. In the continuum, the case of random branching was considered, for example, in~\cite{CHM23,BaiH22}, where the underlying process is branching Brownian motion.

\end{enumerate}
As an immediate consequence of the theorem we get,
\begin{corollary}\label{cor:0.4}
With $\theta$ as in Theorem~\ref{t:1.1},
\begin{equation}\label{eq:0.23}
-\log \rmP \big( \Omega_n^+ \big) = \frac12 (m_n - c_0 \log_2 n)^2 + \theta_{[n]_2}n + o(n) \,.
\end{equation}
\end{corollary}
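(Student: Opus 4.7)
The corollary follows by applying Theorem~\ref{t:1.1} with $u = m_n$, and my plan is essentially a substitution-and-rearrangement argument. The first step is to observe that the event $\Omega_n^+$ in~\eqref{e:1} coincides with $\Omega_n(m_n)$ as defined in~\eqref{e:01.14}, since both require $\min_{\bbL_n} h \geq 0$. Since~\eqref{Lisa.1} gives $m_n = c_0 n - \tfrac{3}{2} c_0^{-1} \log n = O(n)$, the hypothesis $u \leq 2^{\sqrt{n}}$ of Theorem~\ref{t:1.1} is trivially met for all large $n$, and $m_n^+ = m_n$.

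The bulk of the work then consists of converting the factor $\log_2 m_n$ produced by the theorem into the factor $\log_2 n$ appearing in~\eqref{eq:0.23}. Using $\log_2 m_n = \log_2 n + \log_2 c_0 + O((\log n)/n)$ and expanding both squares,
\begin{equation*}
\frac{(m_n - c_0 \log_2 m_n)^2}{2}
= \frac{(m_n - c_0 \log_2 n)^2}{2} - c_0^2 (\log_2 c_0)\, n + O(\log n).
\end{equation*}
Boundedness of $\theta$ combined with $m_n = c_0 n + O(\log n)$ gives $\theta_{[m_n]_2} m_n = c_0 \theta_{[m_n]_2}\, n + O(\log n)$, and the error from the theorem is $o(m_n^+) = o(n)$. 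Summing these contributions yields~\eqref{eq:0.23} with a linear coefficient of the form $c_0 \theta_{[m_n]_2} - c_0^2 \log_2 c_0$ in place of $\theta_{[n]_2} n$.

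The only delicate point is identifying this coefficient with $\theta_{[n]_2}$ as stated. Since $m_n = c_0 n (1+o(1))$, the fractional part $[m_n]_2$ equals the fractional part of $[n]_2 + \log_2 c_0$ up to an $o(1)$ error, and the $o(1)$ slack in Theorem~\ref{t:1.1} allows absorbing both this argument-shift and the additive $-c_0^2 \log_2 c_0$ into a redefinition of $\theta$ on $[0,1)$. Up to this (purely notational) relabeling, $\theta$ may be taken as in Theorem~\ref{t:1.1}. No new probabilistic input is needed; the only obstacle is this bit of bookkeeping on the linear-in-$n$ term, which is minor.
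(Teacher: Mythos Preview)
Your approach is exactly what the paper intends: the corollary is stated as an ``immediate consequence'' of Theorem~\ref{t:1.1} with no separate proof given, and your substitution $u=m_n$ together with the bookkeeping on $\log_2 m_n$ versus $\log_2 n$ is the right computation.

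One clarification on your last paragraph. You correctly observe that what the calculation actually produces is
\[
-\log\rmP(\Omega_n^+)=\tfrac12(m_n-c_0\log_2 n)^2+\big(c_0\,\theta_{[m_n]_2}-c_0^2\log_2 c_0\big)\,n+o(n),
\]
and that $[m_n]_2$ equals the fractional part of $[n]_2+\log_2 c_0$ up to $o(1)$. Using the continuity of $\theta$ noted in the remarks after Theorem~\ref{t:1.1}, the linear coefficient can therefore be written as $\tilde\theta_{[n]_2}+o(1)$ with $\tilde\theta(\delta):=c_0\,\theta(\delta+\log_2 c_0\bmod 1)-c_0^2\log_2 c_0$. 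This $\tilde\theta$ is bounded, but it is not literally the same function as the $\theta$ of Theorem~\ref{t:1.1}; your phrase ``$\theta$ may be taken as in Theorem~\ref{t:1.1}'' overstates the identification. The paper's wording ``with $\theta$ as in Theorem~\ref{t:1.1}'' is a mild abuse in this respect. State plainly that the corollary holds with this explicit transform of $\theta$ (still bounded on $[0,1)$), rather than suggesting no change is needed.
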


We now turn to study the field under the conditioning $\Omega_n^+$. For what follows we denote the law of $h$ restricted to $\bbT_n$ by $\rmP_n$ and set
\begin{equation}
	\rmP_n^+(\cdot) := \rmP_n(\cdot\,|\, \Omega_n^+) \,,
\end{equation}
with $\rmE_n$ and $\rmE_n^+$ defined accordingly. We also set 
\begin{equation}
l_n :=  \lfloor \log_2 n\rfloor \ \ , \ \ \ \ n' := n - l_n \,,
\end{equation}
so that 
\begin{equation}
	m_{n'} = m_n - c_0 \log_2 n + O(1) \,,
\end{equation}
and finally,
\begin{equation}
\label{e:1.10a}
	\mu_n(x) :=  m_{n'} \big(1 - 2^{-|x|}\1_{\{|x| < l_n\}}\big) 
	\quad ; \qquad x \in \bbT_n \,.
\end{equation}
The next result describes the repulsion profile of the field under $\rmP_n^+$. 
\begin{theorem}\label{t:1.3}
There exists $C,c \in (0, \infty)$ such that 
for all $n \geq 1$, $x \in \bbT_n$ and $u > 0$,
\begin{equation}
\label{e:1.9a}
c \exp \Big(-C \frac{u^2}{	\ol{\sigma}_n(x, u)} \Big)
\leq 
	\rmP^+_n \big(h(x) - \mu_n(x) > u \big) \leq C \exp \Big(-c \frac{u^2}{	\ol{\sigma}_n(x, u)} \Big)
\end{equation}
and 
\begin{equation}
\label{e:1.9b}
c \exp \Big(-C \frac{u^2}{	\ul{\sigma}_n(x)} \Big) 
\leq 
	\rmP^+_n \big(h(x) - \mu_n(x) < -u \big) \leq C \exp \Big(-c \frac{u^2}{	\ul{\sigma}_n(x)} \Big) \,,
\end{equation}
where 
\begin{equation}
\label{e:1.11a}
	\ul{\sigma}_n(x) := (|x| - l_n)^++1
	\  , \quad
	\ol{\sigma}_n(x, u) := \big(\log_2 (u \wedge n) - (l_n - |x|)^+\big)^+ + (|x|-l_n)^+ + 1 \,.
\end{equation}
If $|x| > l_n$, then the lower bound in~\eqref{e:1.9b} holds only up to $C(m_{|x|-l_n}+1)$.
\end{theorem}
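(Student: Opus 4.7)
My approach is to use a spine decomposition along the path from the root to $x$. Let $k = |x|$ and set $y_j := h([x]_j)$ for $j = 0, \ldots, k$ along the spine, and $z_j := h([x]_j')$ for $j = 1, \ldots, k$ for the sibling values (with $[x]_j'$ the sibling of $[x]_j$). Conditional on $(y_j)$ and $(z_j)$, the sub-tree rooted at $x$ (depth $n-k$) and the sub-tree rooted at each $[x]_j'$ (depth $n-j$) evolve as independent BRWs, and the event $\Omega_n^+$ factors as the intersection $\Omega_{n-k}(m_{n-k}-y_k) \cap \bigcap_{j=1}^k \Omega_{n-j}(m_{n-j}-z_j)$, each piece quantitatively controlled by Theorem~\ref{t:1.1}. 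The spine and sibling increments $y_j - y_{j-1}$, $z_j - y_{j-1}$ are i.i.d.\ standard Gaussians. Writing $\rmP_n^+(h(x) - \mu_n(x) \in \cdot)$ as a ratio, with Corollary~\ref{cor:0.4} handling the denominator, the numerator becomes a weighted Gaussian integral over the spine configuration.

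\textbf{Saddle point and upper tail.} Substituting Theorem~\ref{t:1.1} into each factor, the log-integrand is, up to controlled errors, a sum of quadratic path costs $\tfrac{1}{2}(y_j-y_{j-1})^2 + \tfrac{1}{2}(z_j-y_{j-1})^2$ and tree-filling costs of the form $\tfrac{1}{2}\big((m_{n-j}-z_j)^+ - c_0 \log_2 (m_{n-j}-z_j)^+\big)^2$. First optimizing over $z_j$ yields $z_j^* \approx y_{j-1}$ and produces an effective spine action $S(y_1, \ldots, y_k)$. The Euler--Lagrange equations for $S$ identify the stationary profile $y_j^* = \mu_n([x]_j)$: for $j < l_n$ the doubling number of descendants at each level forces the increment $y_j^* - y_{j-1}^* = m_{n'}2^{-j}$, yielding exponential convergence to $m_{n'}$; for $j \geq l_n$ the tree-filling cost no longer binds and the profile is constant at level $m_{n'}$. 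The value of $S$ at $y^*$ reproduces the leading terms of Corollary~\ref{cor:0.4}. Expanding $S$ to second order around $y^*$ with $y_k$ fixed at $\mu_n(x)+u$ and integrating out $y_1, \ldots, y_{k-1}$ produces a Gaussian marginal whose effective variance is $\ol{\sigma}_n(x,u)$: the $(|x|-l_n)^+$ contribution from free-evolution curvature past generation $l_n$, the $+1$ from the last spine increment, and the $\log_2(u \wedge n)$ contribution from the effective smoothing of the $\Omega_{n-k}$ factor at elevated $y_k$, controlled via the derivative form of Theorem~\ref{t:1.1}. The matching lower bound on the probability comes from restricting the integration to a small box around the shifted saddle and invoking the lower bounds in Theorem~\ref{t:1.1} for each factor.

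\textbf{Lower tail and main difficulty.} For the lower tail, the same integral analysis proceeds but $y_k$ is depressed below $\mu_n(x)$. The dominant cost now comes from the $\rmP(\Omega_{n-k}(m_{n-k}-y_k))$ factor at the tip of the spine: lowering $y_k$ by $u$ forces $m_{n-k}-y_k$ above $m_{n-k} - \mu_n(x)$ by $u$, and by the derivative bound in Theorem~\ref{t:1.1} the logarithmic derivative of this factor has magnitude of order $m_{n-k} - \mu_n(x) \approx c_0(l_n-|x|)^+ + O(1)$, with curvature bounded below. Combining this with the $(|x|-l_n)^+ + 1$ spine fluctuations yields~\eqref{e:1.9b} with variance $\ul{\sigma}_n(x)$, and the matching probability lower bound again follows from the corresponding lower bound in Theorem~\ref{t:1.1}. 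The restriction $u \leq C(m_{|x|-l_n}+1)$ for the lower bound reflects that once $u$ exceeds this scale, the shifted saddle requires $y_{l_n}$ to drop substantially below $m_{n'}$ and the sub-tree at $[x]_{l_n}$ becomes the bottleneck, changing the cost regime away from the pure quadratic form. The principal technical obstacle is the careful propagation of the logarithmic corrections from Theorem~\ref{t:1.1} across all spine steps: without the uniform-in-$n$ derivative control (the second assertion of Theorem~\ref{t:1.1}), these corrections would accumulate and both the $\log_2 u$ precision in $\ol{\sigma}_n(x,u)$ and the matching lower bound in~\eqref{e:1.9a} would be lost.
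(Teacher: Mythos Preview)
Your spine decomposition and the identification of the saddle profile $y_j^* = \mu_n([x]_j)$ are correct heuristics, and they underlie the paper's argument as well. But the paper does \emph{not} carry out a Laplace/second-order expansion of the spine integral. Instead it packages the spine decomposition plus Theorem~\ref{t:1.1} into Proposition~\ref{p:4.1}, which recasts the law of $\big(h([x]_i)-m_{n'}(1-2^{-i})\big)_{i\le k}$ under $\rmP_n^+$ as a standard random walk under an explicit localizing potential $\chi(h)=\exp(-\sum f_i(h(i)))$ with the two-sided bounds~\eqref{e:4.3} on $f_i'$. The tail bounds then follow by splitting $\chi=\chi'\chi''$ at a carefully chosen level (namely $\ul{k}_u=\lfloor l_n-\log_2 u\rfloor\wedge k$ for the upper-tail upper bound, $\ol{k}_u=\tfrac12(k+\ul{k}_u)$ for the lower bound), invoking Proposition~\ref{p:2.14} to say the walk under $\chi'$ is exponentially tight, and using crude Gaussian/random-walk bounds on the remaining $\chi''$ piece. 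The case $|x|>l_n$ is handled separately by conditioning on $\hat h([x]_{l'_n})$ and combining the just-proved case with FKG and Proposition~\ref{l:2.8a}.

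Your proposal has two concrete gaps. First, a second-order expansion around the saddle yields asymptotics, not the uniform-in-$n,x,u$ two-sided bounds the theorem asserts; you give no mechanism for controlling the integral away from the saddle or for absorbing the $o(u)$ and $\theta_{[u]_2}u$ errors from Theorem~\ref{t:1.1} uniformly across all $k$ spine steps. The paper sidesteps this entirely: Proposition~\ref{p:2.14} gives exponential tails for the walk under $\chi'$ by a direct FKG/ballot argument that never Taylor-expands anything. Second, your account of where the $\log_2(u\wedge n)$ term in $\ol\sigma_n(x,u)$ comes from (``effective smoothing of the $\Omega_{n-k}$ factor'') is not the mechanism at work. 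The actual source is the structure in~\eqref{e:4.3}: each $f_i$ is quadratic only on $(-\infty,2^{-i-1}u')$ and becomes essentially linear above, so once $i$ exceeds $l_n-\log_2 u$ the potential no longer confines the walk at scale $u$, and those $\approx\log_2 u$ ``free'' steps are what produce the extra variance. This is exactly why the paper splits at $\ul{k}_u$.
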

While not entirely immediate, it does not take much to conclude from the theorem that,
\begin{corollary}
\label{c:1.4}
For all $n \geq 1$ and $x \in \bbT_n$ 
\begin{equation}\label{eq:1.14}
		\rmE_n^+ h(x) = m_{n'} \big(1 - 2^{-|x|}\1_{\{|x| < l_n\}}\big)  + O(1) \,,
\end{equation}
where $\mu_n(x)$ is as in~\eqref{e:1.10a} and $O(1)$ is a bounded sequence in $n$.
\end{corollary}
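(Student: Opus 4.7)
The starting point is the layer-cake identity
\[
    \rmE_n^+[h(x)] - \mu_n(x) = \int_0^\infty \rmP_n^+\big(h(x) - \mu_n(x) > u\big)\,\rmd u - \int_0^\infty \rmP_n^+\big(\mu_n(x) - h(x) > u\big)\,\rmd u \,,
\]
into which I would plug the tail bounds of Theorem~\ref{t:1.3} and then split by the depth of $x$.

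The case $|x| \leq l_n$ is quick. Here $\ul{\sigma}_n(x) = 1$, so the lower-tail integrand is at most $C\exp(-cu^2)$, and $\ol{\sigma}_n(x,u)$ grows only logarithmically in $u$. A case split of the upper-tail integral over $u \leq 2^{l_n - |x|}$ (where $\ol{\sigma}_n = 1$ gives a Gaussian integrand), $2^{l_n-|x|} \leq u \leq n$ (where $u^2/(\log_2 u + 1) \gtrsim u$ makes the integrand at most $C\exp(-cu)$), and $u \geq n$ (where the exponential forces a negligible contribution) yields that both integrals are $O(1)$.

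For $|x| > l_n$ the naive tail integration only delivers $|\rmE_n^+ h(x) - \mu_n(x)| = O(\sqrt{|x|-l_n})$, which is too weak. I would invoke the spatial Markov property at $z := [x]_{l_n}$, for which $\mu_n(z) = m_{n'}$, and telescope
\[
    \rmE_n^+ h(x) = \rmE_n^+ h(z) + \rmE_n^+\big[h(x) - h(z)\big] \,.
\]
The first term equals $m_{n'} + O(1)$ by the previous case. For the second, conditional on $h$ restricted to $\bbT_{l_n}$ and on the configurations of the subtrees not containing $x$, the field $\tilde h := h - h(z)$ on the subtree rooted at $z$ is an independent BRW of depth $n' = n - l_n$ conditioned on $\{\min \tilde h \geq -h(z)\}$. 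On the high-probability event $\{|h(z) - m_{n'}| \leq C\}$ (controlled by Theorem~\ref{t:1.3} applied at $z$), this becomes an $\Omega_{n'}(O(1))$-conditioning, an event whose probability is bounded away from $0$ by~\eqref{e:01.13}. A mild-conditioning estimate, using Theorem~\ref{t:1.1} together with the typical-trajectory profile of the BRW under a right-tail conditioning of its minimum at the natural scale, then shows that the conditional mean of $\tilde h$ at any subtree vertex shifts by only $O(1)$ from its unconditional value $0$.

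The hard part will be this last step: the pointwise tail bounds of Theorem~\ref{t:1.3} alone cannot distinguish an $O(1)$ mean shift from an $O(\sqrt{|x|-l_n})$ one, and the necessary cancellation between the upper and lower tail integrals only becomes visible once one reduces, via the spatial Markov property, to a typical-scale (positive-probability) conditioning on the subtree rooted at $z$.
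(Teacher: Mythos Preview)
Your treatment of the case $|x| \leq l_n$ is correct and is essentially the paper's argument: the tails in Theorem~\ref{t:1.3} make $h(x) - \mu_n(x)$ uniformly integrable, so its mean is $O(1)$.

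For $|x| > l_n$, conditioning on $h(z)$ at $z = [x]_{l_n}$ is also the paper's route. The gap is in the ``mild-conditioning estimate'' you invoke at the end. What you need is precisely
\[
0 \;\leq\; \rmE_{n'}\big(h(x') \,\big|\, \Omega_{n'}(u)\big) \;\leq\; u^+ + C
\qquad\text{for all } x' \in \bbT_{n'},\ u \in \bbR,
\]
which is Proposition~\ref{l:2.8a}. This does not follow from Theorem~\ref{t:1.1} as stated: Theorem~\ref{t:1.1} controls $\tfrac{\rmd}{\rmd u}\log p_n(u)$, and to convert that into a mean of $h(x')$ at an \emph{arbitrary} depth one needs the identity~\eqref{eq:4.43} for general $k$, which is a step inside the proof of Theorem~\ref{t:1.1} rather than part of its statement. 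Your phrase ``typical-trajectory profile'' does not point to either ingredient. The paper instead proves Proposition~\ref{l:2.8a} directly, via the peeling comparison of Lemma~\ref{l:103.10}, and then uses it as a black box here.

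A second, smaller gap: you argue only on $\{|h(z)-m_{n'}| \leq C\}$ and never treat its complement. The paper avoids this split entirely. Writing $\nu(v) := v + \rmE_{n'}(h(x')\mid\Omega_{n'}(-v))$, Proposition~\ref{l:2.8a} gives $v \leq \nu(v) \leq v^+ + C$ for \emph{all} $v$, hence the pointwise sandwich
\[
h(z) - m_{n'} \;\leq\; \rmE_n^+\big(h(x) - m_{n'} \,\big|\, h(z)\big) \;\leq\; \big(h(z) - m_{n'}\big)^+ + C
\]
almost surely, and one takes $\rmE_n^+$ using the $|x| = l_n$ case. Your high-probability restriction can be salvaged once Proposition~\ref{l:2.8a} is available (the complement is handled by the same bound together with the Gaussian lower tail of $h(z)-m_{n'}$ from Theorem~\ref{t:1.3}), but it is an unnecessary detour.
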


Thus, under the conditioning on $\Omega_n^+$ the field lifts to height $m_n'$ exponentially fast in the first $l_n = \lfloor \log_2 n \rfloor + O(1)$ generations, its height at depth $k \leq l_n$ strongly concentrated around $m_n'(1-2^{-k})$ with a Gaussian lower tail and an ``almost'' Gaussian upper tail: $\rme^{-\Theta(u^2/(\log u - (l_n-k))^+)}$ (both with constant coefficient). In particular 
for $x \in \bbT_n$ with $|x| = l_n + O(1)$ and $u = O(n)$, one has 
\begin{equation}
\label{e:01.31}
	\rmP^+_n \big(h(x) - m_{n'} > u \big) = \rme^{-\Theta(u^2/\log u)}
	\quad, \qquad
	\rmP^+_n \big(h(x) - m_{n'} < -u \big) = \rme^{-\Theta(u^2)} \,.
\end{equation}

Past depth $l_n$, the fluctuations in the height have Gaussian tails with variance which grow linearly in the depth, but the mean stays $m_n' + O(1)$ (the precise asymptotic law at all such vertices is given in the sequel work~\cite{Work2}). The (mean) repulsion level ``in the bulk'' is therefore $m_n' + O(1)$ which is $c_0 \log_2 n + O(1)$ below the typical (negative) height of the field's unconstrained global minimum. 

For the variance of the height of the field as well as the joint law of the heights at different vertices of $h$ under $\rmP_n^+$ we provide the following estimate for the conditional covariances.
\begin{theorem}\label{t:1.4}
There exists $c > 0$ such that for all $n \geq 1$ and $x,y \in \bbT_n$ with $|x|,|y| \geq l_n$, 
\begin{equation}
\label{e:1.14}
\rmCov_n^+\, \big(h(x), h(y) \big) 
= \left\{ \begin{array}{lll}
|x \wedge y| - l_n + O(1)
& \quad & |x \wedge y| \geq l_n \,,\\
O\big( \rme^{-c(l_n - |x \wedge y|)}\big)
& \quad & |x \wedge y| < l_n \,.
\end{array} \right.
\end{equation}
\end{theorem}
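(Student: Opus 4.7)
The plan is to use the tree Markov property of $\rmP_n^+$: conditional on $\mcF_k := \sigma(h(w):w\in\bbT_k)$ and on $\Omega_n^+$, the fields in the subtrees rooted at distinct vertices of $\bbL_k$ are mutually independent. Set $z := x \wedge y$ and choose the reference depth $k^\star := |z|+1$ if $|z|\geq l_n$, else $k^\star := l_n$. Writing $v_x := [x]_{k^\star}$ and $v_y := [y]_{k^\star}$ (so $v_x \neq v_y$ in both cases), subtree independence yields
\[
\rmCov_n^+\bigl(h(x), h(y)\bigr) = \rmCov_n^+\bigl(g_x(h(v_x)),\, g_y(h(v_y))\bigr),\qquad g_x(s) := \rmE_n^+\bigl[h(x) \,\big|\, h(v_x) = s\bigr],
\]
with $g_x, g_y$ depending only on the subtrees rooted at $v_x, v_y$ respectively.

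In the case $|z| \geq l_n$, so $v_x = z_x$, $v_y = z_y$ are the two children of $z$, I would combine the derivative form of Theorem~\ref{t:1.1} with the tail bounds of Theorem~\ref{t:1.3} to show that $g_x(s) = s + \alpha(s)$ with $\alpha$ bounded and $|\alpha'(s)|$ decaying exponentially in the distance-above-threshold $s - m_{n - |v_x|}$. Heuristically, $-\log q_{n-|v_x|}$ undergoes a Gumbel-type transition at $s = m_{n - |v_x|}$, so the positivity conditioning becomes inactive for $s$ above this value; this exponential decay is what converts naive $O(\sqrt{|z|-l_n})$ corrections into $O(1)$ corrections via Gaussian integration-by-parts. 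Substituting yields $\rmCov_n^+(h(x), h(y)) = \rmCov_n^+(h(z_x), h(z_y)) + O(1)$. Repeating the reduction at depth $|z|$ --- using that given $\mcF_{|z|}$ and $\Omega_n^+$ the heights $h(z_x), h(z_y)$ are conditionally independent by the product form of $\rmP(\Omega_n^+ \mid \mcF_{|z|})$ in the sibling subtrees --- gives $\rmCov_n^+(h(z_x), h(z_y)) = \rmVar_n^+(\tilde f(h(z)))$ with $\tilde f(s) = s + M(s)$ satisfying analogous bounds. A final repetition, together with the refined variance bound $\rmVar_n^+(h(z)) = |z| - l_n + O(1)$ (which upgrades the tail estimates of Theorem~\ref{t:1.3}), then yields the claim.

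For $|z| < l_n$ the preceding $O(1)$ estimate is too coarse; an exponentially small bound is required. To obtain it, I would use the \emph{tilted Gaussian} structure of $\rmP_n^+$ restricted to $\mcF_{l_n}$: its density with respect to the unconditional BRW law on $\bbT_{l_n}$ is proportional to $\prod_{v\in\bbL_{l_n}} q_{n'}(h(v))$, where $q_{n'}(s) := \rmP(\min_{\bbL_{n'}} h \geq -s) = \rmP(\Omega_{n'}(m_{n'} - s))$. Since the tilting factorizes over leaves of $\bbT_{l_n}$, the subtree Markov property persists at every level $\leq l_n$. Iterating subtree independence from $l_n$ down to $c := v_x \wedge v_y$ at depth $j = |z|$ and exploiting the symmetry of the two subtrees of $c$ reduces the problem to
\[
\rmCov_n^+\bigl(h(v_x), h(v_y)\bigr) = \rmVar_n^+\bigl(F_{l_n-j}(h(c))\bigr),
\]
where $F_k(s)$ is the conditional expectation, under the tilted measure, of the field at a leaf at distance $k$ below the root of an enclosing subtree of depth $k$, given the starting height $s$.

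The main obstacle is then a contractive derivative bound $|F_k'(s)| \leq C\,\mathrm{e}^{-c k}$ on the typical range of $h(c)$. The plan is to differentiate the recursion
\[
F_k(s) = \int F_{k-1}(s+\xi)\,\phi(\xi)\, Q_{k-1}(s+\xi)\,\rmd\xi \;\Big/\; \int \phi(\xi)\, Q_{k-1}(s+\xi)\,\rmd\xi,
\]
where $Q_{k-1}$ encodes the cumulative tilting from deeper levels, leading to $F_k'(s) = \rmE_{\text{tilt},s}[F_{k-1}'(s+\xi)] + (\text{covariance bias})$. The mean-reverting effect of the $q_{n'}$-factors near $s \approx m_{n'}$, combined with the sharp asymptotics of Theorems~\ref{t:1.1} and~\ref{t:1.3}, should contract $\sup|F_{k-1}'|$ on the typical window by a fixed factor $\rho < 1$ at each level. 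Combined with $\rmVar_n^+(h(c)) = O(1)$ (Theorem~\ref{t:1.3} with $|c|<l_n$), this yields $\rmVar_n^+(F_{l_n-j}(h(c))) = O(\mathrm{e}^{-2c(l_n-j)})$, as required. Establishing this contraction uniformly in $n$ is the core technical challenge of the proof.
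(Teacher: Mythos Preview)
Your high-level reduction is the same as the paper's: by the Gibbs--Markov property (which survives under $\Omega_n^+$), $\rmCov_n^+(h(x),h(y)) = \rmVar_n^+\big(\rmE_n^+(h(x)\mid h([x]_k))\big)$ with $k=|x\wedge y|$. The divergence is in how you control this variance.

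\medskip
\textbf{Case $|x\wedge y|\geq l_n$.} Your plan has two soft spots. First, ``Gaussian integration by parts'' is not available: under $\rmP_n^+$ the pair $(h(v_x),h(v_y))$ is not Gaussian, so Stein's identity does not apply to the cross-terms $\rmCov_n^+(h(v_x),\alpha(h(v_y)))$. Cauchy--Schwarz alone gives only $O(\sqrt{|z|-l_n})$, and your exponential-decay heuristic for $\alpha'$ does not close this gap without an IBP. Second, you invoke $\rmVar_n^+(h(z))=|z|-l_n+O(1)$ as input, but the tail bounds of Theorem~\ref{t:1.3} have mismatched constants $c,C$ and do not by themselves pin down the variance to $O(1)$; this sharp variance is exactly (the diagonal case of) what you are trying to prove. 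The paper avoids both issues by conditioning further on $h([x]_{l_n'})$ (splitting the variance into a ``level-$l_n'$'' piece and a ``below-$l_n'$'' piece) and then using two a~priori estimates proved earlier under \emph{typical} conditioning $\Omega_n(u)$: Proposition~\ref{l:2.8a} ($0\leq \rmE_n(h(x)\mid\Omega_n(u))\leq u^++C$) and Proposition~\ref{l:2.13} ($\rmE_n(h(x)^2\mid\Omega_n(u))=|x|+O(u^2+1)$). These, combined with the tails of $\hat h([x]_{l_n'})$ from Theorem~\ref{t:1.3}, give $k-l_n+O(1)$ directly.

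\medskip
\textbf{Case $|x\wedge y|<l_n$.} You correctly identify that the heart of the matter is an exponential loss-of-memory statement: the conditional expectation of $h([x]_{l_n})$ given $h([x]_k)$ depends on the latter only through an exponentially small (in $l_n-k$) correction. Your proposed route---a one-step derivative contraction $|F_k'|\leq \rho\,|F_{k-1}'|$---is plausible but you acknowledge it is the ``core technical challenge'' and give no mechanism for it. The paper takes a different and more robust route: Proposition~\ref{p:4.2a} shows that $Y_k:=\hat h([x]_k)$ under $\rmP_n^+$ is a \emph{Markov chain with a localizing drift} (drift toward $0$ once $|Y_k|$ is large, with uniformly Gaussian step tails), and Proposition~\ref{p:4.2} then proves exponential total-variation mixing for any such chain via a standard coupling argument (independent runs until both are in a compact set, then a one-step coalescence attempt). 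This yields $\|\rmP(Y_{l_n'}\in\cdot\mid Y_k=v)-\rmP(Y_{l_n'}\in\cdot\mid Y_k=0)\|_{\rm TV}\leq C\rme^{-c(l_n'-k-|v|)^+}$, which after truncating $Z_n=\rmE_n^+(\hat h(x)\mid \hat h([x]_{l_n'}))$ at level $M$ and integrating against the almost-Gaussian tails of $\hat h([x]_k)$ gives the exponential decay. The localizing-drift representation and the coupling-based mixing are the key tools your proposal is missing.
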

In particular, the variance of the height at a leaf in $\mathbb{L}_n$ under the conditioning is $n' + O(1)$ and leaves whose most recent ancestor is at generation $l_n$ or before, have approximately independent heights. This is already in stark contrast compared to the case of the unconditional field.

Combining~\eqref{e:01.31} and~\eqref{e:1.14} with $x,y \in \bbL_{l_n}$, it follows that the heights of the conditional field at vertices in generation $l_n$ are essentially i.i.d. random variables which are tight around $m_{n'}$. Using the Markov property of $h$, which survives under $\rmP_n^+$, we thus see that beyond depth $l_n$, the conditional field has essentially the same law as $|\bbL_{l_n}| = 2^{l_n}=n$ independent DGFFs on a tree of depth $n-l_n = n'$, each starting at a random height, which is tight around $m_{n'}$, and conditioned to stay above $0$. Since $m_{n'}$ is the typical (negative) height of the global minimum of each of these DGFFs, this conditioning remains non-singular in the limit.

In the sequel paper, we show that under $\rmP_n^+$ the law of $h(x)-m_{n'}$ for $x \in \bbL_{l_n}$ is not just tight in $n$, but also tends to a limit as $n \to \infty$. We then use the above description to obtain convergence in law for the height of the field at all $x \in \bbT_n$ with $|x| \geq l_n$, in both a local and global sense.

\subsection{Proof overview}
\label{s:1.3}
Let us now give an overview of the proofs of the main results in this paper, starting with the proof of Theorem~\ref{t:1.1}
\subsubsection{Sharp control of the right tail of the minimum}
It is clear that in order to achieve the event $\Omega_n(u) = \{\min_{\bbL_n} h \geq -m_n + u\}$,  namely that the global minimum increases by some $u \gg 1$ above its typical value, the field has to rise to height $u + \Theta(1)$ at some generation $k$, so that the minimal height at the leaves of all subtrees rooted in that generation increase accordingly. Otherwise, there will be an exponential cost in the number of $x \in \bbL_k$ such that $h(x) < u + \Theta(1)$.

Now, on one hand $k$ should be as small as possible, so that less generations need to be controlled. On the other hand, $k$ should be as large as possible, so that the increase in the field's height is taken over more steps. The exponential (probabilistic) cost of $\{h(x) \geq v :\: x \in \bbL_k\}$ on first order can be determined by minimizing the Dirichlet Energy term in the exponential in~\eqref{e:1.3a}, namely
\begin{equation}
	\inf_{f \in \bbR^{\bbT_k}}  \Big\{\tfrac12 \big\|\nabla f\big\|_2^2 :\: 
f(0) = 0,\, f_{\bbL_k} = v \Big\} 
= v^2 \inf_{f \in \bbR^{\bbT_k}}  \Big\{\tfrac12 \big\|\nabla f\big\|_2^2 :\: 
f(0) = 0,\, f_{\bbL_k} = 1 \Big\} \,,
\end{equation}
with the second minimization being the discrete analog of the relative capacity from~\eqref{e:101.6}.

The infima above are achieved by the 
discrete harmonic extensions on $\bbT_k$ with the prescribed boundary values. That is, focusing on the first infimum, by the function $\mu^{(v,k)} :\: \bbT_k \to \bbR$ which satisfies
\begin{equation}
	\Delta \mu^{(v,k)} = 0 \quad \text{on} \quad \bbT_k \setminus (\{0\} \cup \bbL_k) \quad, \qquad
	\mu^{(v,k)}(0) = 0\,,\,\, \mu^{(v,k)}_{\bbL_k} = v \,,
\end{equation}
where $\Delta \mu^{(v,k)}(x) = -\sum_{y \sim x} \big(\mu^{(v,k)}(y) - \mu^{(v,k)}(x)\big)$ is the (discrete, negative) Laplacian on $\bbT_k$ (see Subsection~\ref{s:2.2}). It is not difficult to show using random walk estimates that (see Subsection~\ref{s:2.2} again),
\begin{equation}
\frac12 \big\|\nabla \mu^{(v,k)}\big\|_2^2 = 
\frac12 \big\langle \mu^{(v,k)}, \Delta \mu^{(v,k)}\big\rangle = \frac12 v^2 \Big(1+\frac{1}{2^k-1}\Big) 
\quad, \qquad
	\mu^{(v,k)}(x) = \frac{1-2^{-|x|}}{1-2^{-k}} v \,.
\end{equation}
Plugging in $v=u + \Theta(1)$ and setting 
\begin{equation}
l_u := \lfloor \log_2 u \rfloor \,,
\end{equation}
we see that the first order exponential cost is $\gg \frac12 u^2 + \Theta(u)$ if $k \ll l_u$ and equal to $u^2 + \Theta(u)$ otherwise. 
This shows that the minimal $k$ which achieves optimal first order cost is $k = l_u + \Theta(1)$, and suggests that the optimal way to realize the event $\Omega_n(u)$ is by having the field reach height $u + \Theta(1)$ at all vertices in that generation.

A key observation now is that at generation $l_u + \Theta(1)$, the remaining depth of the tree has already decreased from $n$ to $n-l_u$. This implies that the typical minimal height at the leaves of each subtree rooted in this generation has already increased from $-m_n$ to $-m_{n-l_u}$, i.e. by
\begin{equation}
	m_n - m_{n-l_u} = c_0 l_u + O(1) 
\end{equation}
(assuming $u = O(n)$). This implies that the field, in fact, need only reach height $u' + \Theta(1)$ in generation $l_u + \Theta(1)$, where 
\begin{equation}
u' := u - m_n + m_{n-l_u} = u - c_0 l_u + O(1)\,.
\end{equation}
In particular, the revised first order exponential cost and optimal repulsion profile are
\begin{equation}
\frac12 \big\langle \mu^{(u', l_u)}, \Delta \mu^{(u', l_u)}\big\rangle =  \frac12 (u')^2 + \Theta(u) 
\quad, \qquad
	\mu^{(u', l_u)}(x) = \big(1-2^{-|x|}\big) \big(u' + \Theta(1)) \,.
\end{equation}

To make this heuristics rigorous, we condition on the value of $h$ at generation $l_u$ to write
\begin{equation}
\label{e:101.40}
\rmP \big(\Omega_n(u) \big) =  \rmE_{l_u} \varphi_{n,{l_u},u} (h) \,,
\end{equation}
where
\begin{equation}
\label{e:1001.40}
\varphi_{n,l_u,u} (h) = \prod_{x \in \bbL_{l_u}} 
p_{n-l_u}\big(u' - h(x) \big) \,,
\end{equation}
and
\begin{equation}
p_k(v) := \rmP_k\big(\Omega_k(v)\big) \,,
\end{equation}
are the tail probability functions for the law of the minimum.
We then {\em tilt} the measure $\rmP_{l_u}$ by $\mu^{(u',l_u)}$, via the Cameron-Martin Formula:
\begin{equation}
	\rmE_k F(h) = \rme^{- \tfrac12 \langle \mu, \Delta \mu \rangle} \rmE_k \Big( F(h + \mu) \rme^{- \langle h, \Delta \mu \rangle} \Big) \,,
\end{equation}
with the choices: $k = l_u$, $\mu = \mu^{(u',l_u)}$ and $F = \varphi_{n,{l_u},u}$ as above. This equates $\rmP_n(\Omega_n(u))$ as
\begin{equation}
\label{e:101.43}
	\rme^{-\frac{u'^2}{2} - C_{[u]_2} u + o(u)}\,
	\rmE_{l_u} 
	\exp 
	\sum_{x \in \bbL_{l_u}} g_{n,u}\big(h(x)\big) 
\quad ; \quad
g_{n,u}(v) := \log p_{n-l_u} (-v) -\big(C'_{[u]_2} + o(1)\big) v \,,
\end{equation}
where $C_\delta$, $C'_\delta$ for $\delta \in [0,1)$ are uniformly bounded constants.

Now thanks to the convergence of the centered minimum~\eqref{e:01.13} and the a-priori tail estimates thereof~\eqref{e:01.16}, the functions $g_{n,u}$ satisfy (uniformly in $u$):
\begin{description}
    \item[C1] $g_{n,u}(v) \to g_{\infty,u}(v)$ as $n \to \infty$ uniformly in $v$ on compact subsets of $\bbR$.
    \item[C2] $g_{\infty,u}$ is continuous.
    \item[C3] $\lim_{v \to \pm \infty} \sup_n g_{n,u}(v) = -\infty$.
\end{description}
Proposition~\ref{l:3.2} then shows that under such conditions, there exists $G_u^* = G_{[u]_2}^*$ such that 
\begin{equation}
\frac{1}{|\bbL_{l_u}|} \log \Big[ \rmE_{l_u} 
	\exp 
	\sum_{x \in \bbL_{l_u}} g_{n,u}\big(h(x)\big) \Big]
 = G^*_{[u]_2} + o(1) \,.
\end{equation}
Plugging this in~\eqref{e:101.43} and observing that $|\bbL_{l_u}| = 2^{-[u]_2} u$, this gives the first part of Theorem~\ref{t:1.1} with $\theta_\delta = -C_\delta+ 2^{-\delta} G^*_\delta$.

To control the derivative of $\rmP(\Omega_n(u))$, we first differentiate the right hand side in~\eqref{e:101.40} with the law $\rmP_{l_u}$ expressed in terms of its Gibbs formulation~\eqref{e:1.3a}. This gives
\begin{equation}
\label{e:101.45}
\frac{\rmd}{\rmd u} \big(-\log p_n(u)\big) = \frac{1}{1-2^{-l_u}} \rmE_n \big(h(x)  \,\big|\, 
		\Omega_n(u)\big) 
\quad ; \qquad x \in \bbL_{l_u}\,.
\end{equation}
It is thus sufficient to show that the above conditional mean is $u' + O(1)$. 

For a lower bound on this mean, we use~\eqref{e:101.40} again to write the conditional mean as
\begin{equation}
\label{e:101.46}
	\frac{\rmE_{l_u} \big(h(x) \varphi_{n,l_u,u}(h)\big)}
	{\rmE_{l_u} \varphi_{n,l_u,u}(h)} \,,
\end{equation}
with $\varphi_{n,l_u,u}$ as in~\eqref{e:1001.40}.
Using the Gaussian upper bound~\eqref{e:01.16} on the right tail of the minimum, one may show that $-\frac18(v-C)^2 - \log p_{k}(v)$ is increasing in $v$ for sufficiently large $C < \infty$, uniformly in $k$. It then follows from the FKG property of $h$ (see Subsection~\ref{s:2.4}), that replacing $\varphi_{n,l_u,u}$ in~\eqref{e:101.46} by
\begin{equation}
\psi_{n,l_u,u}(h) := \prod_{x \in \bbL_{l_u}} \tilde{p} \big(u'-h(x) \big) \quad ; \qquad 
\tilde{p}(v) := \rme^{-\frac18 (v-C)^2} \,,
\end{equation}
can only make the resulting quantity smaller. 

The quotient~\eqref{e:101.46} with $\psi$ in place of $\varphi_{n,l_u,u}$ 
can then be interpreted as the mean at $x \in \bbL_{l_u}$ of a DGFF on the graph $\bbV = \wt{\bbT}_{l_u+1}$, which is obtained from $\bbT_{l_u}$ by adding a new neighbor $y'$ to each $y \in \bbL_{l_u}$ and imposing boundary conditions $0$ at $0$ and $u'-C$ at $\wt{\bbL}_{l_u+1} = \{y' :\: y \in \bbL_{l_u}\}$. Standard DGFF estimates (see Subsection~\ref{s:2.2}) then show that the mean at $x$ is $u' + O(1)$ and yields the lower bound.

For the upper bound, we use a different representation of the conditional mean in~\eqref{e:101.45}. Instead of conditioning on the values of $h$ at all vertices in generation $l_u$, we condition on the values of $h$ on all vertices on the branch $([x]_k : k=0, \dots, l_u)$. Letting $\ol{\rmP}_{l_u}$ denote the law of $h$ restricted to such branch, we thus have
\begin{equation}
\label{e:101.48}
\rmE_n \big(h(x)\,\big|\, \Omega_n(u)\big) = \frac{\ol{\rmE}_{l_u} \big(h(x) \varphi_{n,l_u,u}(h)\big)}
	{\ol{\rmE}_{l_u} \varphi_{n,l_u,u}(h)} \,,
\end{equation}
where this time, 
\begin{equation}
\label{e:101.49}
	\varphi_{n,l_u,u}(h) = \prod_{k=0}^{l_u} p^{\frac12 (1+\1_l(k))}_{n-k}\big(u_k-h(k)\big) 
\quad ; \qquad u_k = u-m_n+m_{n-k} \,.
\end{equation}
Above, we have identified between $([x]_k :\: k=0, \dots, l_u)$ and the linear graph $(0,\dots, l_u)$ via the obvious isomporphism. The $k$-th term in the product accounts for the (conditional) probability that the minimal height among all leaves of the ``half'' subtree of $[x]_k$, which exclude the subtree of $[x]_{k+1}$, is at least $-m_n + u$.

As in the lower bound, the first step is to replace the function 	$\varphi_{n,l_u,u}$ in~\eqref{e:101.48} by 
\begin{equation}
	\label{e:104.4}
	\psi_{n,l_u,u}(h)  :=  \prod_{k=1}^{l_u} \tilde{p}^{1+\1_l(k)}_{n-k}(u_k - h(x))
\quad ; \qquad	
\tilde{p}_k(v) := \exp \Big(-\frac{(v^+)^2 + C v}{4-2^{-k+2}} \Big) \,,
\end{equation}
which, thanks to a-priori bounds on the right tail of the minimum and
FKG, makes the desired quantity larger, once $C$ is chosen large enough.
Using the Cameron-Martin Formula to (carefully) tilt the measure $\ol{\rmP}_{l_u}$ in both the numerator and denominator of~\eqref{e:101.48}, the ratio becomes upper bounded by
\begin{equation}
\label{e:101.51}
u' + \frac{\ol{\rmE}_{l_u} \big(h(l_u) \chi_{n,l_u,u}(h))}{\ol{\rmE}_{l_u}\, \chi_{n,l_u,u}(h)} \,,
\end{equation}
where 
\begin{equation}
\label{e:101.52}
\chi_{n,l_u,u}(h) = \exp \Big( -\sum_{k=1}^{l_u} g_{n,k,u} \big((h(k)\big) \Big) \,,
\end{equation}
with the functions $g_{n,k,u}: \bbR \to \bbR$ satisfying 
for some $a, b, D \in (0,\infty)$:
\begin{description}
	\item[A1] $g_{n,k,u}(v) \geq a |v|$ if $|v| > b$, and $|g_k(v)| \leq D$ if $|v| \leq b$,
	\item[A2] $g_{n,k,u}$ is increasing for all $v > b$ and decreasing for $v < -b$. 
\end{description}

As, evidently, $(h(k) :\: k = 0, \dots, l_u)$ under 
$\ol{\rmP}_{l_u}$ is a random walk with standard Gaussian steps, the quotient in~\eqref{e:101.51} can be interpreted as the mean at time $l_u$ of such random walk subject to a {\em localizing potential} $\chi_{n,l_u,u}$, which weakly attracts it to $0$. We then show in Proposition~\ref{p:2.14} that under {\rm (A1)-(A2)}, the height of such walk has uniform exponential tails at all times $k \in [0,l_u]$ and in particular uniformly bounded means. Combined with~\eqref{e:101.52}, this gives the desired upper bound on the conditional mean.

\subsubsection{Reduction to a random walk subject to a localization force}
Deriving sharp asymptotics for the right tail of the centered minimum is also the ``bootstrap step'' in obtaining bounds on the means, fluctuations and covariances of the conditional field (Theorem~\ref{t:1.3} and Corollary~\ref{c:1.4}), as well as asymptotics for its law (see~\cite{Work2}). Indeed, as in the upper bound on the conditional mean in~\eqref{e:101.45}, 
a key step in proving the desired bounds and asymptotics, is the representation of $h$ on $([x]_k :\: k=0, \dots, l_u)$ as a random walk that is subject to a localization force which attracts it to zero. 

Once again, the argument begins by writing
\begin{equation}
\label{e:101.53}
\rmP_n \Big(\big(h([x]_k)\big)_{k=0}^{l_u} \in \cdot \,\big|\, \Omega_n(u)\Big) = \frac{\ol{\rmE}_{l_u} \big(\varphi_{n,l_u,u}(h);\; h \in \cdot \big)}
	{\ol{\rmE}_{l_u} \varphi_{n,l_u,u}(h)} \,,
\end{equation}
with $\varphi_{n,l_u,u}$ as in~\eqref{e:101.49}. However, unlike in the derivation before, we do not replace $\varphi_{n,l_u,u}$ by a dominating function $\psi$, as we are after sharp estimates, not bounds. Instead we crucially use the sharp asymptotics for the right tail function $p_k$ in the product in~\eqref{e:101.49} (with, importantly, the error in the derivative form). 

Proceeding as before by carefully tilting both the numerator and denominator in~\eqref{e:101.53}, we obtain (Proposition~\ref{p:4.1}),
\begin{equation}
\rmP_n \Big( \big(h([x]_k) - u'(1-2^{-k})\big)_{k=0}^{l_u} \in \cdot 
\,\Big|\, 
\Omega_n(u) \Big) = \frac{\ol{\rmE}_{l_u} \big(\chi_{n,l_u,u}(h);\; h \in \cdot \big)}
	{\ol{\rmE}_{l_u}\, \chi_{n,l_u,u}(h)} \,,
\end{equation}
where $\chi_{n,l_u,u}$ is as in~\eqref{e:101.52} for different functions
$g_{n,k,u}$ which still satisfy {\rm (A1)} and {\rm (A2)}. This (as before, but now with equality) recasts the law of $\big(h\big([x]_{k}\big) - (1-2^{-k})u'\big)_{k=0}^{l_u}$ under $\rmP_n(-|\Omega_n(u))$ as a {\em random walk subject to a localizing potential}.

As a consequence of the above representation, good control on the functions  $g_{n,k,u}$ and the Markovian structure of $h$, we can also show (see Proposition~\ref{p:4.2a}) that $\big(h\big([x]_{k}\big) - (1-2^{-k})u'\big)_{k=0}^{l_u}$ under $\rmP_n(-|\Omega_n(u))$  is in fact Markovian with step kernel
\begin{equation}
\label{e:101.56}
	\rmP \big(Y_{k+1} - v \in \cdot
		\, \big|\, Y_k = v \big)
	\quad , \qquad Y_k := h([x]_{k}) - (1-2^{-k})u' \,,
\end{equation}
having (at least) uniformly Gaussian upper and lower tails and drift $d_v$ which satisfies:
\begin{description}
	\item[B1] $\ul{d}_v \geq d$ for all $v < -b$,
	\item[B2] $\ol{d}_v \leq -d$ for all $v > b$,
	\item[B3] $|\ul{d}_v| \vee |\ol{d}_v| < |v| + D$ for all $v \in \bbR$,
\end{description}
for some $b,d,D \in (0,\infty)$,
Thus, an alternative representation of $\big(h\big([x]_{k}\big) - (1-2^{-k})u'\big)_{k=0}^{l_u}$ under $\rmP_n(-|\Omega_n(u))$ is as, what we call with a slight abuse of terminology, a {\em random walk subject to a localizing drift}.

With the above two representations at hand (specialized to $u=m_n$), bounding the mean and fluctuations under the conditioning on $\Omega_n^+$, reduces to controlling the same quantities for a random walk subject to a localization force. This is not a difficult task, as this is model is quite tractable and the theory (of one dimensional random surfaces or directed polymers) is fairly developed. The key tool here is Proposition~\ref{p:2.14}, mentioned before, which shows that for a random walk subject to a localizing potential, the position of the walk at all times $k \leq l_u$ has (at least) uniformly bounded exponential tails. 

To control the covariances and, in the sequel work, to obtain convergence in law, the key tool is Proposition~\ref{p:4.2} which shows that for a random walk $(Y_k)_{k \geq 0}$ subject to a localizing drift which satisfies {\rm (B1)-(B3)}, we have
\begin{equation}
	\big\| \rmP (Y_{k'} \in \cdot\,|\,Y_k = v) - \rmP(Y_{k'} \in \cdot\,|\,Y_k = v')) \big\|_{\rm TV} \leq C\rme^{-(c(k'-k)-|v|\vee |v'|)^+} \,.
\end{equation}
Using the second representation above (\ref{e:101.56} with $u=m_n$), this implies that the conditional field restricted to a branch of the tree ``forgets'' its height exponentially fast in the distance from the root. This is used here to derive the exponential decays of the covariances (Theorem~\ref{e:1.14}) and, in the next work, to show that the centered height at $x \in \bbL_{l_n}$ admits a weak limit.

\subsubsection*{Paper outline}
The remainder of the paper is organized as follows. In Section~\ref{s:2} we state general preliminary results from the theory of Gibbs measures and discrete harmonic analysis. Section~\ref{s:2.5} contains preliminary BRW results. Some of these results come directly from existing literature, while others are developed here.
Section~\ref{s:3} includes the results we need on random walks subject to a localizing potential or a localzing drift. The proof of Theorem~\ref{t:1.1} is the focus of Section~\ref{s:5}. The reduction to a random walk subject to a localizing force is done in Section~\ref{s:6}. Finally, Section~\ref{s:7} provides the proofs of Theorem~\ref{t:1.3}, Corollary~\ref{c:1.4} and Theorem~\ref{t:1.4}.

\section{General preliminaries}
\label{s:2}

\subsection{DGFF and discrete harmonic analysis on general graphs}
\label{s:2.2}
Let $\bbG = (\bbV,\bbE,\omega)$ be a finite non-empty undirected graph with positive edge weights $\omega = (\omega_e)_{e \in \rmE}$.
Let also $\emptyset \subsetneq \bbU \subseteq \bbV$ and $\psi: \bbU \to \bbR$. Then the DGFF on $\bbG$ with boundary values $\psi$ on $\bbU$ is random field $h: \bbV \to \bbR$ with law given by
\begin{equation}
\label{e:2.4}
	\rmP(\rmd h) = \frac{1}{Z} \exp \Big(-\frac12\sum_{\{x,y\} \in \bbE} \omega_{\{x,y\}} \big(h(x) - h(y)\big)^2\Big)
	\prod_{x \in \bbV \setminus \bbU} \rmd h(x) \prod_{x \in \bbU} \delta_{\psi(x)} \big(h(x)\big) \,,
\end{equation}
where $Z$ is the constant which makes the above a probability measure.

There is a strong connection between the DGFF and discrete harmonic analysis on the same underlying graph. Indeed, let 
$\Delta: \bbR^\bbV \to \bbR^\bbV$ be the (discrete, negative) Laplacian on $\bbG$, defined via
\begin{equation}
	-\Delta f(x) = \sum_{y: \: \{x,y\} \in \bbE} \omega_{\{x,y\}} \big(f(y) - f(x)\big) \,.
	\end{equation}
We shall call a function $f: \bbV \to \bbR$ (discrete) harmonic at $x$ if $\Delta f(x) = 0$. Given a function $f$ on a non-empty subset $\bbU \subseteq \bbV$, we denote by $\ol{f}$ the unique real-valued function on $\bbV$ which identifies with $f$ on $\bbU$ and is harmonic at all $x \in \bbV \setminus \bbU$. Letting $S = (S_t)_{t \geq 0}$ be a continuous random walk on $\bbG$ with edge-transition rates $\omega$ (that is $S$ is a Markov Jump Process on $\bbV$ with transition rates $\omega_{\{x,y\}}$ between state $x$ to state $y$) and writing $\rmP_x$ and $\rmE_x$ for the probability measure and expectation under which $S_0 = x \in \bbV$, it is well known that
\begin{equation}
\label{e:2.3a}
\ol{f}(x) = \rmE_x f(S_{\tau_\bbU})
\ , \quad 
\tau_\bbU := \min \big\{t \geq 0 :\: S_t \in \bbU\big\} \,.
\end{equation}

The (discrete, negative) Green Function on $\bbG$ with boundary vertices $\bbU$ is the function
$\cG : \bbV \times \bbV \to \bbR$ which vanishes if either $x$ or $y$ is in $\bbU$ and otherwise satisfies
\begin{equation}
\Delta_y \cG(x,y) = \1_x(y) 
\quad, \qquad x,y \in \bbV \setminus \bbU \,.
\end{equation}
With the definition of $S$ and $\tau_\bbU$ as above, we have
\begin{equation}
\cG(x,y) = \rmE_x \int_{t=0}^{\tau_\bbU} \1_y(S_t) \rmd t \,.
\end{equation}

The connection to the law of the DGFF stems from the fact that the exponential term in~\eqref{e:2.4} can be written as
\begin{equation}
\label{e:2.4a}
	\rme^{-\frac12 \langle h,\, \Delta h \rangle} \,,
\end{equation}
where, henceforth, $\langle \cdot, \cdot \rangle$ is the standard inner product on $\bbR^{\bbV}$.
It then follows by a standard computation that $h$ with law as in~\eqref{e:2.4} is Gaussian with means and covariances given by
\begin{equation}
\label{e:2.7}
\rmE h(x) = \ol{\psi}(x)
\quad, \qquad
\rmCov\big(h(x), h(y)\big) = \cG(x,y) \,.
\end{equation}

The BRW up to generation $n$ corresponds to the DGFF on the graph $\bbG = \bbT_n$, with boundary vertices $\bbU = \{0\}$ and conductances $\omega  \equiv 1$. In the sequel, we shall also consider the case of the BRW conditioned on given uniform values at generation $n$. This case corresponds to the DGFF on the same graph and with the same conductances, but with boundary vertices $\bbU = \{0\} \cup \bbL_n$. If $\psi : \{0\} \cup \bbL_n \to \bbR$ are prescribed boundary values, with $\psi(0) = 0$ and $\psi(z) = u$ for $z \in \bbL_n$ and some $u \in \bbR$, then in view of~\eqref{e:2.3a},
\begin{equation}
\label{e:2.8b}
	\ol{\psi}(x)=u \rho_n(|x|) \,,
\end{equation}
where $\rho_n(k)$ is the probability that a discrete time random walk with steps $\frac13 \delta_{-1} + \frac23 \delta_{+1}$ starting from $k$ reaches $n$ before reaching $0$. The standard Gambler's Ruin Formula gives 
\begin{equation}
\label{e:2.9b}
\rho_n(|x|) = \frac{1-2^{-k}}{1-2^{-n}} = 1 - \frac{2^{n-k} - 1}{2^n-1} \,.
\end{equation}

The following is a simple version of the Cameron-Martin Theorem which follows directly from writing the exponential in the definition of the DGFF as in~\eqref{e:2.4a}.
\begin{lemma}
\label{l:1}
Let $\rmP$ be the law of a DGFF on $\bbG=(\bbV, \bbE, \omega)$ with zero boundary conditions on $\emptyset \subsetneq \bbU \subseteq \bbV$. Let also $\mu \in \bbR^\bbU$. Then for any $F: \bbR^\bbV \to \bbR$,
	\begin{equation}
	\rmE F(h) = \rme^{- \tfrac12 \langle \mu, \Delta \mu \rangle} \rmE \Big( F(h + \mu) \rme^{- \langle h, \Delta \mu \rangle} \Big)\,.
\end{equation}
\end{lemma}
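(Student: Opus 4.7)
The plan is to verify the identity by a direct algebraic manipulation of the finite-dimensional density~\eqref{e:2.4}, combined with a change of variables in the Lebesgue integral over $\bbR^{\bbV \setminus \bbU}$. In essence, this is the standard Cameron--Martin formula for the Gaussian measure on $\bbR^{\bbV \setminus \bbU}$ whose inverse covariance is the restriction of $\Delta$ to that set, and the proof is a completion-of-the-square argument at the density level.

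First, I would record that $\Delta$ is self-adjoint with respect to $\langle \cdot, \cdot \rangle$, since the bilinear form $\langle f, \Delta g \rangle$ rewrites, by a summation-by-parts on $\bbG$, as the manifestly symmetric expression $\sum_{\{x,y\} \in \bbE} \omega_{\{x,y\}} (f(x)-f(y))(g(x)-g(y))$. This immediately yields the quadratic expansion
\begin{equation*}
\tfrac12 \big\langle h+\mu,\, \Delta(h+\mu) \big\rangle
\;=\; \tfrac12 \langle h, \Delta h \rangle + \langle h, \Delta \mu \rangle + \tfrac12 \langle \mu, \Delta \mu \rangle,
\end{equation*}
and hence, exponentiating and rearranging,
\begin{equation*}
\rme^{-\frac12 \langle h, \Delta h \rangle}
\;=\; \rme^{-\frac12 \langle \mu, \Delta \mu \rangle} \, \rme^{-\langle h, \Delta \mu \rangle} \, \rme^{-\frac12 \langle h+\mu, \Delta(h+\mu) \rangle}.
\end{equation*}

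Second, I would substitute this identity into the right-hand side of the lemma, expressing the expectation by~\eqref{e:2.4}. After pulling the deterministic factor $\rme^{-\frac12 \langle \mu, \Delta \mu \rangle}$ outside the integral, the integrand becomes $F(h+\mu)\, Z^{-1} \rme^{-\frac12 \langle h+\mu,\, \Delta(h+\mu)\rangle}\, \prod_{x\in \bbU}\delta_0\big(h(x)\big)$. A translation of the integration variable $h \mapsto h - \mu$ on $\bbR^{\bbV \setminus \bbU}$ has unit Jacobian and returns precisely the expression $\rmE F(h)$.

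The only point requiring care — and the implicit convention under which the statement should be read — is that the translation must be compatible with the zero boundary values enforced by the $\delta_0$ factors on $\bbU$, which means $\mu$ has to vanish on $\bbU$. Under this reading there is no genuine obstacle: the lemma is an exact identity at the level of finite-dimensional Gaussian densities, forced by the quadratic expansion above, and the computation is essentially mechanical once self-adjointness of $\Delta$ is in hand.
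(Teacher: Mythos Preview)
Your argument is correct and is exactly the computation the paper has in mind: the paper offers no proof at all beyond the remark that the lemma ``follows directly from writing the exponential in the definition of the DGFF as in~\eqref{e:2.4a},'' and your completion-of-the-square plus change of variables is precisely that. Your observation that the statement should be read with $\mu \in \bbR^{\bbV}$ vanishing on $\bbU$ (rather than $\mu \in \bbR^{\bbU}$ as literally written) is also correct and consistent with how the lemma is applied later in the paper.
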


\subsection{Change of measure}
Let $\bbV$ be a finite index set and suppose that $\rmP$ is a probability measure  on $\bbR^\bbV$ representing the law of some field $h$ on $\bbV$. In the sequel we consider two types of measure changes of $\rmP$. Given  $\mu \in \bbR^\bbV$, the first measure change is obtained by considering the law
\begin{equation}
	\rmP (\cdot - \mu) \,,
\end{equation}
namely, the law of $h$ after shifting its mean by $\mu$. We shall refer to this change-of-measure as a {\em tilting} of the law $\rmP$. 

For $\varphi : \bbR^\bbV \to (0, \infty)$, we also define $\rmP^\varphi$ via
\begin{equation}
\label{e:2.10a}
	\frac{\rmd \rmP^\varphi}{\rmd \rmP}  = \Big(\textstyle \int \varphi \rmd \rmP \Big)^{-1} \varphi \,.
\end{equation} 
While both measure changes use the same notation, there will be no ambiguity, as the one to use will be clear from the mathematical type of the superscript. 

\subsection{FKG Property of measures}
\label{s:2.4}
Here and after we endow the space $\bbR^\bbV$ with the usual partial order under which $h_1 \geq h_2$ if and only if $h_1(x) \geq h_2(x)$ for all $x \in \bbV$. 
We shall call a function $F: \bbR^\bbV \to \bbR$ non-decreasing if $\varphi(h_1) \geq \varphi(h_2)$ if $h_1 \geq h_2$. If $\rmP$ and $\rmQ$ are two measures on $\bbR^{\bbV}$ such that $\int \varphi(h) \rmP(\rmd h) \geq \int \varphi(h) \rmQ(\rmd h)$ for all non-decreasing bounded continuous $\varphi$, then we shall say that $\rmP$ stochastically dominates $\rmQ$ and write $\rmP \geq_s \rmQ$. A probability measure $\rmP$ on $\bbR^\bbV$ is said to have the FKG property (is FKG, for short), if for any two bounded measurable functions $\varphi,\psi: \bbR^\bbV \to \bbR$ which are non-decreasing, it holds that $\int \varphi(h)\psi(h) \rmP(\rmd h)  \geq \int \varphi(h) \rmP(\rmd h) \int \psi(h) \rmP(\rmd h)$. Equivalently, $\rmP^\varphi \geq_s \rmP$ for all bounded, positive and continuous $\varphi:\: \bbR^\bbV \to \bbR$ which is non-decreasing.

\begin{remark}
\label{r:2.1}
The restriction to bounded functions in both stochastic domination and the FKG property can be removed if one considers non-negative test functions. This follows straightforwardly from the Monotone Convergence Theorem. Similarly, the continuity requirement can be removed, since bounded continuous non-decreasing functions are dense in all bounded non-decreasing functions under the $\bbL^1$ topology, as shown in~\cite[Lemma]{Pitt82}.
\end{remark}

\begin{remark}
\label{r:2.2}
It is well known that any probability measure on $\bbR$ is FKG. In particular, if $\rmP$ and $\rmQ$ are two such measures which are absolutely continuous with respect to Lebesgue measures and have strictly positive Radon-Nikodym derivatives, then $\log \frac{\rmP(\rm du)}{\rmd u} \geq \log \frac{\rmQ(\rmd u)}{\rmd u}$ implies $\rmP \geq_s \rmQ$.	
\end{remark}

Positivity of the covariances of $h$ under $\rmP_n$ shows that it is FKG~\cite[Corollary~1.7]{Herbst1991}. The following lemma shows that this is also the case under conditioning on $\Omega_n(u)$ for any $u \in \bbR$. The proof is essentially contained in~\cite[Lemma~3.1]{Deuschel1996}.
\begin{lemma}\label{lemma:FKG}
For all $u \in \bbR$, the law 
	$\rmP_n\big(-\,\big|\, \Omega_n(u)\big)$ is FKG. In particular, $\rmP_n^+$ is FKG.
\end{lemma}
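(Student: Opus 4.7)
My plan is to verify the Ahlswede--Daykin lattice (MTP2) condition on the conditional density, which is the standard sufficient criterion for the FKG property on $\bbR^V$; this mirrors the approach of Deuschel and Giacomin~\cite{Deuschel1996} cited in the statement. In view of the Gibbs representation~\eqref{e:1.3a}, after substituting $h(0)=0$, the density $f_u$ of $\rmP_n(\cdot\,|\,\Omega_n(u))$ with respect to Lebesgue measure on $\bbR^{\bbT_n\setminus\{0\}}$ has the form
\[
f_u(h) \;\propto\; \exp\!\Bigl(-\tfrac12 \|\nabla h\|_2^2\Bigr) \prod_{x\in \bbL_n} \1_{[v_0,\infty)}\bigl(h(x)\bigr), \qquad v_0 := -m_n+u,
\]
with $\|\nabla h\|_2^2 = \sum_{y \sim x}\bigl(h(y)-h(x)\bigr)^2$. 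The goal is to show that $f_u$ is log-supermodular, i.e.\ $f_u(h_1 \vee h_2)\, f_u(h_1 \wedge h_2) \geq f_u(h_1)\, f_u(h_2)$ for all $h_1, h_2 \in \bbR^{\bbT_n\setminus\{0\}}$; once this is in hand, the FKG property of $\rmP_n(\cdot\,|\,\Omega_n(u))$ follows from the continuous Holley/FKG theorem, and specializing to $v_0=0$ (i.e.\ $u=m_n$) yields the FKG property of $\rmP_n^+$.

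Since log-supermodularity is preserved under products, it suffices to verify the inequality separately for the Gaussian factor and for each leaf-indicator. Each indicator $\1_{[v_0,\infty)}(h(x))$ depends on a single coordinate, and the inequality is trivial there: if both $h_1(x), h_2(x) \geq v_0$ then $h_1(x) \wedge h_2(x) \geq v_0$ and both sides equal $1$, otherwise the right-hand side vanishes. For the Gaussian factor, log-supermodularity reduces edge-by-edge to the pointwise inequality
\[
(a-b)^2 + (c-d)^2 \;\geq\; \bigl((a\vee c)-(b\vee d)\bigr)^2 + \bigl((a\wedge c)-(b\wedge d)\bigr)^2 \qquad (a,b,c,d \in \bbR),
\]
which is elementary: a brief case split on the joint orderings of $(a,c)$ and $(b,d)$ shows that either equality holds (consistent orderings) or the difference is a non-negative multiple of $(c-a)(b-d)$ (crossed orderings), both of whose factors have the same sign.

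I do not expect a substantive obstacle, as the entire argument is algebraic. The only mildly delicate technical point is that $f_u$ vanishes outside $\Omega_n(u)$, so the continuous FKG theorem in its most common strictly-positive form does not apply verbatim; however, this is harmless because $\Omega_n(u)$ is closed under the coordinatewise operations $\vee$ and $\wedge$ (being the intersection of half-spaces $\{h(x) \geq v_0\}$, $x \in \bbL_n$), so the log-supermodular inequality holds on all of $\bbR^{\bbT_n \setminus \{0\}}$ with the convention that both sides are $0$ whenever either argument lies outside $\Omega_n(u)$. Alternatively, one may approximate each indicator by a smooth, strictly positive, non-decreasing function, apply the standard positive-density FKG theorem, and pass to the limit by dominated convergence on bounded non-decreasing test functions, invoking Remark~\ref{r:2.1} to restrict attention to continuous ones.
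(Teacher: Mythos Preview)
Your argument is correct. Both the direct verification of log-supermodularity (MTP2) and the smooth-approximation alternative you mention at the end yield the result; the edge inequality and the sublattice closure of $\Omega_n(u)$ are checked correctly.

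The paper takes your \emph{alternative} route: it replaces the indicator by the smooth positive weight
\[
\varphi_{u,\beta}(h)=\exp\Bigl(-\beta\sum_{x\in\bbL_n}\bigl((h(x)-u)^-\bigr)^3\Bigr),
\]
invokes \cite[Corollary~1.7]{Herbst1991} to get FKG for $\rmP_n^{\varphi_{u,\beta}}$, and then sends $\beta\to\infty$ using dominated convergence. Compared to this, your primary approach is more self-contained: you verify the lattice condition for the Gaussian weight by the elementary inequality $(a-b)^2+(c-d)^2\geq((a\vee c)-(b\vee d))^2+((a\wedge c)-(b\wedge d))^2$ rather than citing Herbst--Pitt, and you avoid the limiting step by working on the sublattice $\Omega_n(u)$ directly. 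The trade-off is that the paper's version sidesteps any discussion of FKG for densities that may vanish, while yours needs the (standard but sometimes stated only for strictly positive densities) fact that MTP2 on a sublattice support suffices---which you correctly flag and resolve either way.
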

\begin{proof}
We use an approximation argument. Let $u \in \bbR$ and for $\beta>0$, define
\begin{equation}
\varphi_{u,\beta}(h) :=  \exp \Big(-\beta \sum_{x\in \bbL_n} \big((h(x)-u)^-\big)^3 \Big).
\end{equation}
Since $\varphi_{u,\beta}$ is twice continuously differentiable, it follows from~\cite[Corollary~1.7]{Herbst1991} that $\rmP_n^{\varphi_{u,\beta}}$ is FKG.
It remains to take $\beta \to \infty$ and observe that $\varphi_{u, \beta}(h) \to 1_{\Omega_n(u)}(h)$ and that $\varphi_{u,\beta}$ is always bounded by $1$.
\end{proof}

As a consequence of FKG we have the following
\begin{lemma}
\label{l:2.4}
Let $\varphi: \bbR^{\bbT_n} \to \bbR_+$ be continuous, non-decreasing and positive. Then for all $u \in \bbR$, the function
\begin{equation}
v \mapsto \rmE_n \big(\varphi(h+v)\,\big|\, h \in \Omega_n(u-v) \big) \,,
\end{equation}
is non-decreasing in $v$.
\end{lemma}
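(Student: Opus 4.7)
The plan is to reduce the monotonicity claim to a stochastic-domination statement between the conditional laws of two shifted DGFFs, and then derive that domination from an FKG argument applied to the Cameron-Martin density.

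First I would fix $v' > v$ and set $\delta := v'-v > 0$ and $\eta := u-v$, so the target inequality becomes $\rmE_n\big[\varphi(h+v') \,\big|\, \Omega_n(\eta-\delta)\big] \geq \rmE_n\big[\varphi(h+v) \,\big|\, \Omega_n(\eta)\big]$. The substitution $\tilde h := h+\delta$, whose law under $\rmP_n$ is the DGFF shifted uniformly by $\delta$ at every vertex with $\tilde h(0)=\delta$ (denote it $\rmP_n^\delta$), converts $\{h \in \Omega_n(\eta-\delta)\}$ into $\{\tilde h \in \Omega_n(\eta)\}$. Setting $\tilde\varphi(x) := \varphi(x+v)$, still non-decreasing, the task becomes
\[
\rmE_{\rmP_n^\delta}\big[\tilde\varphi(\tilde h) \,\big|\, \Omega_n(\eta)\big] \;\geq\; \rmE_{\rmP_n}\big[\tilde\varphi(h) \,\big|\, \Omega_n(\eta)\big].
\]
Since $\Omega_n(\eta)$ is measurable in the non-root coordinates $U := \bbT_n \setminus \{0\}$, and since $\tilde\varphi$ is also monotone in the root coordinate (where $\tilde h(0) = \delta \geq 0 = h(0)$), this reduces further to the stochastic domination
\[
\rmP_n^\delta\big(\,\cdot\,\big|\,\Omega_n(\eta)\big)\big|_U \;\geq_s\; \rmP_n\big(\,\cdot\,\big|\,\Omega_n(\eta)\big)\big|_U
\]
on $\bbR^U$.

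To prove this domination, I would compute the Radon-Nikodym derivative of $\rmP_n^\delta|_U$ with respect to $\rmP_n|_U$ directly from the Gibbs representation~\eqref{e:2.4}. Since the two Gaussians differ only by a shift of the root boundary value from $0$ to $\delta$, their Dirichlet energies differ only in the two edges incident to the root, and a short calculation gives
\[
\frac{\rmd\rmP_n^\delta|_U}{\rmd\rmP_n|_U}(\phi) \;\propto\; \exp\!\Big(\delta \sum_{y \sim 0} \phi(y)\Big),
\]
which is a positive, continuous and non-decreasing function of $\phi \in \bbR^U$ for $\delta > 0$. Because this tilt commutes with conditioning on the $U$-measurable event $\Omega_n(\eta)$, the conditional law on the left-hand side is the same non-decreasing tilt of the conditional law on the right-hand side. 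The FKG property of $\rmP_n(\,\cdot\,|\,\Omega_n(\eta))$ established in Lemma~\ref{lemma:FKG} (together with Remark~\ref{r:2.1} to handle the unbounded weight) then gives that tilting an FKG measure by a non-decreasing positive function produces a stochastically larger measure, which yields the required domination and hence the lemma.

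The only calculational step is the density-ratio identification, a routine Girsanov-type computation for shifted Gaussians, so I anticipate no substantive obstacle beyond correctly setting up the reduction to an FKG statement.
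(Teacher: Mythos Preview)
Your proposal is correct and takes essentially the same approach as the paper: both reduce to showing that $\rmP_n^{v'}(\cdot\,|\,\Omega_n(u)) \geq_s \rmP_n^{v}(\cdot\,|\,\Omega_n(u))$ for $v'>v$, compute the Cameron--Martin density ratio to be proportional to $\exp\big((v'-v)\sum_{x\in\bbL_1} h(x)\big)$, and invoke the FKG property of the conditional measure from Lemma~\ref{lemma:FKG}. Your treatment is slightly more careful in explicitly separating out the deterministic root coordinate before taking the Radon--Nikodym derivative, whereas the paper computes the density ratio directly on the non-root coordinates without comment; otherwise the arguments are identical.
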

\begin{proof}
Fix $u \in \bbR$ and for $v \in \bbR$, let $\rmP^v_n$ be the law of $h+v$ under $\rmP_n$. Then, the desired statement is equivalent to 
$v \mapsto \rmP^v_n(-|\Omega_n(u))$ being monotonically increasing in $v$. Since $\rmP^v_n(-|\Omega_n(u))$ is FKG by Lemma~\ref{lemma:FKG}, and in view of Remark~\ref{r:2.1}, it is sufficient to show that for $v' > v$, the Radon-Nikodym derivative 
\begin{equation}
\label{e:2.13}
	\frac{\rmP^{v'}_n\big(\rmd h \,|\, \Omega_n(u) \big)}{\rmP^{v}_n\big(\rmd h\, |\, \Omega_n(u)\big)}
\end{equation}	
is a non-decreasing continuous positive function of $h$ on $\Omega_n(u)$. Indeed, the last display is equal to
\begin{multline}
	\frac{\rmP_n(\Omega_n(u-v))}{\rmP_n(\Omega_n(u-v'))}\exp \Big\{-\frac12 \sum_{x \in \bbL_1} \Big((h(x)-v')^2 - (h(x)-v)^2\Big)\Big\}
	 = C \exp \Big\{ (v' - v) \sum_{x \in \bbL_1} h(x) \Big\} \,,
\end{multline}
for some $C > 0$ which depends on $u$,$v$, $v'$ and $n$. The right hand side is non-decreasing, continuous and positive as desired.
\end{proof}

\section{Branching random walk preliminaries}
\label{s:2.5}
In this section we collect standard results concerning the BRW. While some of them are readily available from the literature, a few require a short derivation, which we include here.
\subsection{Tail estimates for the minimum}
\label{s:2.5.2}
Theorem~\ref{t:1.1}, which is one of the main results in this work, provides sharp asymptotics for the upper tail of the law of the centered minimal position of $h$ on $\bbL_n$ for values which are allowed to depend on $n$. While of interest by itself (e.g., it gives sharp asymptotics for $\rmP_n(\Omega_n^+))$ as a corollary), it is also a key input in the proofs of other results in this paper. The proof of this theorem as well as other results in this work, rely on existing coarser/less general estimates for the tails of the centered minimum. This subsection is therefore devoted to presenting these estimates and using them to derive a priori more refined versions thereof.

Henceforth, for $n \geq 0$, we define the right tail function of the minimum $p_n : \bbR \to [0,1]$ via
\begin{equation}
	\label{e:3.3}
	p_{n}(u) := \rmP_n\big(\Omega_n(u)\big) 
	\ , \ \ u \in \bbR \,.
\end{equation}
The following upper tail estimate is essentially taken from~\cite{Roy_2024}. The proof goes by comparison of the centered minimum of the BRW with the centered minimum of a variant thereof, known as the {\em Switching Sign Branching Random Walk} (SSBRW). This is $\zeta_n$ in the statement below.
\begin{lemma}\cite[Lemma~3.2 and Lemma~4.1]{Roy_2024}
\label{l:2.2}
For all $n \geq 1$,
\begin{equation}
\label{e:2.20}
\min_{x \in \bbL_n} h(x) + m_n  \overset{\rmd}= \xi_n + \zeta_n \,,
\end{equation}
where $\xi_n$ is a centered Gaussian with variance $1-2^{-n}$ and $\zeta_n$ is some random variable which is independent of $\xi_n$. Moreover, 
there exist constants $C,c \in (0,\infty)$ and $\delta: \bbR \to (0,1)$ satisfying $\delta(u) \to 1$ as $u \to -\infty$, such that for all $n \geq 1$, $u \in \bbR$,
\begin{equation}
\label{e:2.21}
\delta(u) \leq \rmP (\zeta_n \geq u) \leq C \rme^{-\rme^{cu}} \,.
\end{equation}
\end{lemma}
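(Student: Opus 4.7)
The plan is to take $\xi_n$ to be the empirical mean of $h$ on $\bbL_n$. Setting
\[
  \xi_n := \bar h_n := \frac{1}{2^n}\sum_{x \in \bbL_n} h(x),
  \qquad
  \zeta_n := \min_{x \in \bbL_n} h(x) + m_n - \bar h_n,
\]
the identity $\xi_n + \zeta_n = \min h + m_n$ holds by construction. Using the telescoping $|x \wedge y| = \sum_{k=1}^n \1\{[x]_k = [y]_k\}$, a direct computation gives, for every $x \in \bbL_n$,
\[
  \rmCov(h(x), \bar h_n) = \frac{1}{2^n}\sum_{k=1}^n 2^{n-k} = 1 - 2^{-n} = \rmVar(\bar h_n),
\]
so $\xi_n \sim \mathcal N(0, 1-2^{-n})$ and $\rmCov(h(x) - \bar h_n, \bar h_n) = 0$ for every $x$. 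Joint Gaussianity then upgrades this to independence of $\bar h_n$ from the entire vector $(h(x) - \bar h_n)_{x \in \bbL_n}$, and hence from any deterministic functional of that vector, in particular from $\zeta_n$.

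For the lower bound $\rmP(\zeta_n \geq u) \geq \delta(u) \to 1$ as $u \to -\infty$, I would argue by tightness. Aidekon's convergence~\eqref{e:01.13} yields tightness of $\{\min h + m_n\}_n$, and $\bar h_n$ is obviously tight since $\rmVar(\bar h_n) \leq 1$. Writing
\[
  \rmP(\zeta_n < u) \leq \rmP\bigl(\min h + m_n < u + K\bigr) + \rmP(\bar h_n > K)
\]
and choosing $K = |u|/2$ shows that both summands tend to $0$ as $u \to -\infty$, uniformly in $n$; one then takes $\delta(u) := \inf_n \rmP(\zeta_n \geq u)$.

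For the upper bound $\rmP(\zeta_n \geq u) \leq C\exp(-\exp(cu))$, the naive decoupling
\[
  \rmP(\zeta_n \geq u)\,\rmP(\xi_n \geq 0) \leq \rmP\bigl(\min h + m_n \geq u\bigr)
\]
is \emph{insufficient}: the right tail of $\min h + m_n$ decays only like $\exp(-u^2/2)$ for $u$ growing with $n$ (see~\cite{Chen2018}), because that Gaussian tail is entirely carried by $\xi_n$. The doubly exponential bound on the residual must exploit the sign-switching structure of $h - \bar h_n$: iterating the decomposition $(\eta_L, \eta_R) = (S + X, S - X)$ and retaining only the antisymmetric parts realizes $h - \bar h_n$ as a BRW-type field whose two children of every internal $y$ carry opposite increments $\pm Y(y)$ with $\rmVar(Y(y)) = 1 - 2^{|y|-n}$, and whose leaf values sum to $0$ on every subtree. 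The event $\{\zeta_n \geq u\}$ then forces every leaf to lie within $m_n - u$ of these identically vanishing subtree averages; running the first/second moment and barrier estimates of~\cite{AidekonBRW13} on this sign-switching tree, now with the added rigidity of mean-zero subtrees, produces the doubly-exponential decay uniformly in $n$ and $u$. The \emph{main obstacle} is precisely this last step: pure pull-out/convolution arguments are capped at the Gaussian tail of $\min h + m_n$, so beating it to $\exp(-\exp(cu))$ genuinely requires exploiting that $\sum_{x \in \bbL_n}(h(x) - \bar h_n) = 0$ on every subtree (equivalently, the sign-switching representation).
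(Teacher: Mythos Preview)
Your decomposition $\xi_n=\bar h_n$ is exactly the Switching Sign Branching Random Walk (SSBRW) construction that the paper (following~\cite{Roy_2024}) uses; your variance/independence computation and the tightness argument for the lower bound are correct and coincide with the paper's proof. One point you should add, which the paper exploits and which simplifies the upper bound considerably: since $\sum_{x\in\bbL_n}(h(x)-\bar h_n)=0$, one has $\min_{\bbL_n}(h-\bar h_n)\le 0$ deterministically, hence $\zeta_n\le m_n$ almost surely, so $\rmP(\zeta_n\ge u)=0$ for all $u>m_n$; this disposes of the range $u>m_n$ trivially and lets the doubly-exponential bound be proved only for $u\le m_n$, where the SSBRW barrier argument (Lemma~4.1 in~\cite{Roy_2024}) applies directly.
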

\begin{proof}
This is essentially Lemma~3.2 and Lemma~4.1 in~\cite{Roy_2024}. The proof for the upper bound when $u \leq m_n$ can be taken verbatim from the proof of Lemma~4.1 in~\cite{Roy_2024} (the statement therein includes the restriction that $u=o(n)$, but the proof goes through for all $u \leq m_n$. For $u > m_n$, the probability in~\eqref{e:2.21} is zero, as, by construction, there always exists a leaf whose value under the SSBRW is non-positive. The lower bound follows immediately from the tightness of the left hand side in~\eqref{e:2.20}.
\end{proof}

As a consequence, we have the following bounds on the derivative of $p_n$.
\begin{lemma}
\label{l:4.2a}
There exist $C \in (0,\infty)$ such that for all $n \geq 1$ and $u \in \bbR$,
\begin{equation}
	\label{e:4.10}
	-\frac{1}{1-2^{-n}} u^+ - C \leq \frac{\rmd}{\rmd u} \log  p_n(u) \leq -\frac{1}{1-2^{-n}} u^+ + C \log (u\vee \rme) .
\end{equation}
In particular,
\begin{equation}
c \exp \Big(-\tfrac{1}{2-2^{-n+1}} (u^+)^2\Big)
\leq p_n(u) \leq  C \exp \Big(-\tfrac{1}{2-2^{-n+1}} (u^+)^2 + C u \log (u\vee \rme)\Big) \,.
\end{equation}
\end{lemma}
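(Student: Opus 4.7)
The plan is to work from the decomposition in Lemma~\ref{l:2.2}, writing $\min_{\bbL_n}h+m_n \eqd \xi_n+\zeta_n$ with $\xi_n \sim \cN(0,\sigma_n^2)$, $\sigma_n^2 := 1-2^{-n}$, and $\zeta_n$ independent with a doubly exponentially decaying right tail. This gives
\[
p_n(u) = \rmE\big[\bar\Phi\big((u-\zeta_n)/\sigma_n\big)\big],
\qquad
-\frac{\rmd}{\rmd u}\log p_n(u) = \frac{1}{\sigma_n}\rmE_u\!\Big[r\big((u-\zeta_n)/\sigma_n\big)\Big],
\]
where $r(x) := \phi(x)/\bar\Phi(x)$ is Mills' ratio and $\rmE_u$ denotes expectation under the tilted measure $\rmd\rmP_u(y) \propto \bar\Phi((u-y)/\sigma_n)\,\rmd F_{\zeta_n}(y)$. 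Using the pointwise bounds $x^+ \leq r(x) \leq x^+ + C_0$ valid on all of $\bbR$, I would reduce~\eqref{e:4.10} to the two-sided estimate $u^+ - C\log(u\vee\rme) \leq \rmE_u[(u-\zeta_n)^+] \leq u^+ + C$, which after multiplication by $1/\sigma_n^2 \in [1,2]$ and absorption of the $O(1/\sigma_n)$ contribution from $r(x)-x^+$ delivers both derivative bounds.

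For the upper estimate on $\rmE_u[(u-\zeta_n)^+]$ I would use that $y \mapsto \bar\Phi((u-y)/\sigma_n)$ is increasing, so $\rmP_u$ stochastically dominates the law of $\zeta_n$; applied to the decreasing function $(u-\,\cdot\,)^+$ this gives $\rmE_u[(u-\zeta_n)^+] \leq \rmE[(u-\zeta_n)^+] \leq u^+ + \rmE[\zeta_n^-]$, with $\rmE[\zeta_n^-]$ bounded uniformly in $n$ via Lemma~\ref{l:2.2} and a standard uniform exponential lower-tail estimate for $\min_{\bbL_n}h+m_n$ (available in the BRW literature, or directly from tightness of $\min_{\bbL_n}h+m_n$ combined with independence of the Gaussian piece $\xi_n$). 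For the lower estimate I would use $(u-\zeta_n)^+ \geq u-\zeta_n$ and reduce to bounding $\rmE_u[\zeta_n^+] = \int_0^\infty \rmP_u(\zeta_n > t)\,\rmd t$ by $O(\log(u\vee\rme))$. The tilt over-weights large $\zeta_n$ by at most $1/p_n(u) \leq \rme^{Cu^2}$, the latter inequality coming from the elementary lower bound $p_n(u) \geq c(u+1)^{-1}\rme^{-(u+A)^2/(2\sigma_n^2)}$ obtained by conditioning on $\{\zeta_n \geq -A\}$ for $A$ large enough that $\rmP(\zeta_n \geq -A) > 0$. Splitting the integral at $T := C_1\log(u\vee\rme)$ and using $\rmP_u(\zeta_n > t) \leq \rmP(\zeta_n > t)/p_n(u) \leq C\rme^{-\rme^{ct}}/p_n(u)$ for $t > T$, the tail contribution is $O(1)$ once $C_1$ is chosen so that $\rme^{cT} \geq 2Cu^2$, which gives $\rmE_u[\zeta_n^+] \leq T+O(1)$.

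The integrated bounds in the second display of~\eqref{e:4.10} I would then obtain by integrating the derivative estimate from $0$ to $u$, using $p_n(0) \geq c$ uniformly in $n$ (from tightness of $\min_{\bbL_n}h+m_n$). The main obstacle will be the lower derivative estimate: merely exponential tightness of $\zeta_n$ would not beat the exponentially large tilt factor $1/p_n(u) \sim \rme^{u^2/(2\sigma_n^2)}$, and it is precisely the doubly exponential upper tail of $\zeta_n$ supplied by Lemma~\ref{l:2.2} (originating in the switching-sign coupling of~\cite{Roy_2024}) that allows the threshold $T$ to be kept at $O(\log u)$ rather than polynomial in $u$. A secondary point requiring care is the uniformity of all constants as $n \to \infty$ and $u$ ranges over $\bbR$, but since the inputs in Lemma~\ref{l:2.2} are themselves uniform this should not be an essential difficulty.
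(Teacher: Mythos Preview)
Your approach is correct and follows essentially the same route as the paper: both start from the decomposition $\min_{\bbL_n}h+m_n\eqd\xi_n+\zeta_n$ of Lemma~\ref{l:2.2}, both reduce the derivative bound to controlling where the Gaussian piece $\xi_n$ (equivalently, $u-\zeta_n$) sits under the conditional/tilted law, and both use the doubly exponential right tail of $\zeta_n$ to confine it below $C\log u$, which is exactly what produces the $\log(u\vee\rme)$ correction.

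The packaging differs slightly. The paper writes $-\tfrac{\rmd}{\rmd u}\log p_n(u)=\tfrac{1}{\sigma_n^2}\rmE(\xi_n\mid \xi_n+\zeta_n\geq u)$ and bounds this conditional mean directly by truncating on $\{\zeta_n\leq C\log u\}$ (lower bound) and $\{\xi_n\leq u+1\}$ (upper bound); you instead condition on $\zeta_n$, express everything through Mills' ratio, and use stochastic domination (FKG on $\bbR$) for the upper bound. These are equivalent computations, since $\rmE(\xi_n\mid\xi_n+\zeta_n\geq u)=\sigma_n\,\rmE_u[r((u-\zeta_n)/\sigma_n)]$. Your FKG argument for the upper side is arguably cleaner than the paper's truncation at $u+1$.

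One small point: your claim $\rmE[\zeta_n^-]\leq C$ uniformly in $n$ does not follow from tightness alone (tightness gives no uniform integrability). You need an actual uniform left-tail bound on $\zeta_n$, which you can get from the exponential left tail of the centered minimum (Lemma~\ref{l:2.10} in the paper, i.e.\ the Bramson--Ding--Zeitouni estimate you allude to) via $\rmP(\zeta_n\leq -t)\leq \rmP(\xi_n+\zeta_n\leq -t/2)/\rmP(\xi_n\geq -t/2)$. This is routine and you correctly flag it as available in the literature.
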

\begin{proof}
Using Lemma~\ref{l:2.2}, may write
\begin{equation}
	p_n(u) = \rmP \big( \xi_n + \zeta_{n} \geq u \big)
	= \int  \rmP(\zeta_{n} \geq u-s)  \rmP (\xi_n \in \rmd s)  = \int \rmP(\zeta_n \geq -s)  \rmP (\xi_n - u \in \rmd s) \,,
\end{equation}
where $\xi_n$ and $\zeta_n$ are as in the lemma. Taking the derivative and recalling that $\xi_n$ is a centered Gaussian with variance $1-2^{-n}$, 
\begin{align}
	\frac{\rmd}{\rmd u} p_n(u) & = - \int \frac{u+s}{1-2^{-n}} \rmP(\zeta_n \geq -s) \rmP (\xi_n - u \in \rmd s)  \\
	& = 
	- \frac{1}{1-2^{-n}} \int  s \rmP(\zeta_n \geq u-s) \rmP (\xi_n  \in \rmd s) 
	= - \frac{1}{1-2^{-n}} \rmE \big( \xi_n ; \xi_n + \zeta_n \geq u \big) \,,
\end{align} 
so that the derivative in~\eqref{e:4.10} becomes 
\begin{equation}
	\label{e:4.14}
\frac{1}{p_n(u)}\frac{\rmd}{\rmd u} p_n(u) =	 - \frac{1}{1-2^{-n}} \frac{\rmE \big( \xi_n ; \xi_n + \zeta_n \geq u \big) }{\rmP(\xi_n + \zeta_n \geq u )} = -\frac{1}{1-2^{-n}} \rmE \big( \xi_n \big| \xi_n + \zeta_n \geq u \big) \,.
\end{equation}

Assume first that $u \geq 1$. To lower bound the conditional mean, we may add the restriction that $\zeta_n \leq C \log u$ for some $C$ to be chosen later. On this event, $\xi_n$ must be at least $u-C \log u$, so that the conditional restricted mean is at least
\begin{multline}
	\label{e:4.15}
	(u-C\log u) \rmP (\zeta_n \leq C \log u | \xi_n+\zeta_n \geq u) \\
	\geq (u-C\log u) - u \rmP (\zeta_n > C \log u | \xi_n+\zeta_n \geq u) .
\end{multline}
But since
\begin{equation}
	\label{e:4.16}
	\rmP ( \xi_n+\zeta_n \geq u) \geq \rmP ( \xi_n \geq u) \rmP(\zeta_n \geq 0) \geq c \rme^{-\frac{u^2}{2-2^{-n+1}}} \,,
\end{equation}
the double exponential tail of $\zeta_n$ shows that the second term on the right hand side of~\eqref{e:4.15} is $o(1)$ in $s$ once $C$ is chosen large enough.
At the same time, the conditional mean in~\eqref{e:4.14} is upper bounded by
\begin{equation}
	u+1 + \frac{\rmE \big(\xi_n ; \xi_n \geq u +1)}{\rmP(\xi_n + \zeta_n \geq u \big)}.
\end{equation}
Now, a standard computation shows that the numerator above is at most a constant times $\rme^{-(u+1)^2/(2-2^{-n+1})}$. Combined with the lower bound in~\eqref{e:4.16}, this shows that the last fraction is $o(1)$ in $u$ and thus the conditional mean in~\eqref{e:4.14} is at most $u+C$.

Now, if $u < 1$, we lower bound the conditional mean in~\eqref{e:4.14} by
\begin{equation}
	\frac{\rmE \xi_n - \rmE \big(|\xi_n| ; \xi_n + \zeta_n < u\big)}{\rmP(\xi_n + \zeta_n \geq u)} \geq 
	- \frac{\rmE |\xi_n|}{\rmP(\xi_n + \zeta_n \geq u)} \,,
\end{equation}
which is bounded below by a constant thanks to~\eqref{e:4.16}. Similarly, we can upper bound the conditional mean in~\eqref{e:4.14} by 
\begin{equation}
	\frac{\rmE |\xi_n|}{\rmP(\xi_n + \zeta_n \geq u)} 
\end{equation}
and proceed as before. Collecting all bounds we recover the first statement. The second follows by integration and the fact the $\log p_n(0)$ is uniformly bounded.
\end{proof}

Next, we recall that thanks to the convergence of the centered minimum~\eqref{e:01.13},  the limit
\begin{equation}
\label{e:2.32}
	p_\infty(u) = \lim_{n \to \infty} p_{n}(u) \,,
\end{equation}
exists for all $u \in \bbR$ and is non-trivial. As the limit is continuous in $u$ (being the sum of two independent random variables, one of which at least is absolutely continuous w.r.t. the Lebesgue measure) and $p_n$ is monotone for all $n$, the convergence is even uniform on compact sets. The next lemma shows that the same applies to the derivative.
\begin{lemma}\label{l:2.9a}
	It holds that 
	\begin{equation}
		\label{e:2.33}
		\lim\limits_{n\to \infty}\frac{\mathrm{d}}{\mathrm{d}u} \log p_n(u)= \frac{\mathrm{d}}{\mathrm{d}u} \log p_\infty(u)
	\end{equation}
	uniformly in $u$ on compact sets. For all $n \in [0, \infty]$, the derivatives 
	$\frac{\mathrm{d}}{\mathrm{d}u} \log p_n$ are continuous and satisfy
	\begin{equation}
		\label{e:2.37b}
		-2u^+ - C \leq \frac{\mathrm{d}}{\mathrm{d}u} \log p_n(u) \leq 0 \,.
	\end{equation}
\end{lemma}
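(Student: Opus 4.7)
The plan is to derive the bounds and continuity at finite $n$ first, and then upgrade the pointwise convergence $p_n \to p_\infty$ (given by~\eqref{e:01.13}/\eqref{e:2.32}) to uniform convergence of the log-derivatives via an Arzel\`a--Ascoli equicontinuity argument; the $n = \infty$ assertions then follow automatically from the same limit.

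For finite $n \geq 1$, the upper bound $\frac{\mathrm{d}}{\mathrm{d}u}\log p_n \leq 0$ is immediate because $u \mapsto p_n(u)$ is non-increasing ($\Omega_n(u)$ shrinks in $u$). The lower bound $\geq -2u^+ - C$ follows at once from the left inequality in~\eqref{e:4.10} together with the elementary estimate $1/(1-2^{-n}) \leq 2$ valid for $n \geq 1$. For continuity, I use the decomposition $\min h + m_n \overset{\mathrm{d}}{=} \xi_n + \zeta_n$ from Lemma~\ref{l:2.2}, with $\xi_n$ a centered Gaussian of variance $1 - 2^{-n}$ independent of $\zeta_n$. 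Writing $\phi_n$ for the Gaussian density of $\xi_n$, the sum $\xi_n + \zeta_n$ admits the continuous density $g_n(u) = \int \phi_n(u-z)\,\mathrm{d}\mathcal{L}(\zeta_n)(z)$, so $p_n(u) = \int_u^\infty g_n$ gives $p_n'(u) = -g_n(u)$. Since $p_n(u) > 0$ for all $u$ (the Gaussian component has full support), $\frac{\mathrm{d}}{\mathrm{d}u}\log p_n = -g_n/p_n$ is continuous.

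For the uniform convergence on a fixed compact $K \subset \bbR$, I apply Arzel\`a--Ascoli to $\{\frac{\mathrm{d}}{\mathrm{d}u}\log p_n\}_{n\geq 1}$. Uniform boundedness on $K$ is provided by the bounds above. For equicontinuity I bound
\[
\frac{\mathrm{d}^2}{\mathrm{d}u^2}\log p_n = -\frac{g_n'}{p_n} - \Big(\frac{g_n}{p_n}\Big)^{\!2}
\]
uniformly in $n$ on $K$. Uniform control of $|g_n|$ and $|g_n'(u)| = \big|\int \tfrac{u-z}{1-2^{-n}}\phi_n(u-z)\,\mathrm{d}\mathcal{L}(\zeta_n)(z)\big|$ reduces to the elementary Gaussian facts $\sup_{n\geq 1,\,x\in\bbR}\phi_n(x) < \infty$ and $\sup_{n\geq 1,\,x\in\bbR}|x|\phi_n(x) < \infty$, while a uniform lower bound $\inf_{n\geq 1,\,u\in K} p_n(u) > 0$ follows from the monotonicity of $p_n$ in $u$ and the pointwise limit $p_n(\max K) \to p_\infty(\max K) > 0$ for $n$ large (with a finite-family argument handling the rest, since each $p_n$ is strictly positive and continuous).

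Finally, any subsequential uniform limit $F$ of the log-derivatives on $K$ must equal $\frac{\mathrm{d}}{\mathrm{d}u}\log p_\infty$: passing to the limit in $\log p_n(u) - \log p_n(v) = \int_v^u \frac{\mathrm{d}}{\mathrm{d}u}\log p_n$ using uniform convergence of the derivative and the uniform convergence $\log p_n \to \log p_\infty$ on $K$ (a consequence of the pointwise convergence, monotonicity in $u$, and uniform positivity on $K$ via a Dini-type argument) yields $\log p_\infty(u) - \log p_\infty(v) = \int_v^u F$, so $F = \frac{\mathrm{d}}{\mathrm{d}u}\log p_\infty$. Since all subsequential limits coincide, the full sequence converges uniformly on $K$; this simultaneously shows $\log p_\infty \in C^1$ and that the bounds pass to $n = \infty$. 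The main technical obstacle is the equicontinuity step, whose crucial sub-step is the uniform-in-$n$ lower bound on $p_n$ over $K$; with that in hand, the remainder is a routine compactness argument.
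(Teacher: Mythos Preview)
Your argument is correct, but the paper proceeds differently. It first rewrites the log-derivative as a conditional expectation:
\[
\frac{\mathrm{d}}{\mathrm{d}u}\log p_n(u) = -2\,\rmE_n\big(h(x)\,\big|\,\Omega_n(u)\big)
\quad\text{for } x\in\bbL_1,
\]
obtained by differentiating the Gibbs weight. From this, the bounds~\eqref{e:2.37b} follow directly from Proposition~\ref{l:2.8a} (which gives $0\le \rmE_n(h(x)\mid\Omega_n(u))\le u^+ + C$), and continuity follows from dominated convergence. The key step for uniform convergence is that, by the FKG property of $\rmP_n(\cdot\mid\Omega_n(u))$ (Lemma~\ref{lemma:FKG}), the map $u\mapsto \rmE_n(h(x)\mid\Omega_n(u))$ is monotone; hence the log-derivative is monotone in $u$, and pointwise convergence of monotone functions to a continuous limit is automatically uniform on compacts. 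Your route instead extracts density bounds from the SSBRW decomposition (Lemma~\ref{l:2.2}) and runs Arzel\`a--Ascoli via a uniform second-derivative estimate. Both work; the paper's FKG/monotonicity trick is shorter and yields the explicit representation of the limit as a conditional expectation, while your approach is purely analytic and avoids the forward reference to Proposition~\ref{l:2.8a}.
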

\begin{proof}
Writing $p_n(u)$ as $\rmP_n(-u+h \in \Omega_n(0))$. It follows by differentiating the exponent in~\eqref{e:2.4} that 
	\begin{equation}
		p'_n(u) = -2\rmE_n \big(h(x) \;;\; -u+h \in \Omega_n(0) \big) \,,
	\end{equation}
	where $x \in \bbL_1$, so that the derivative on the left hand side of~\eqref{e:2.33} is equal to
	\begin{equation}
		\label{e:2.35a}
		\frac{p'_n(u)}{p_n(u)} = -2\rmE_n \big(h(x) \,|\, \Omega_n(u)\big) = -2\rmE_1^{\varphi_{n,u}}(h(x))\,,
	\end{equation}
	where $\varphi_{n,u}(h) := p_{n-1}(u - m_n + m_{n-1} - h(x))$.
	
	Now it follows from $m_n - m_{n-1} \to c_0$ as $n \to \infty$, the continuity of $p_\infty$ and uniform convergence in~\eqref{e:2.32}, that 
	$\varphi_{n,u}(h) \to \varphi_{\infty,u}(h) := p_\infty(u-c_0 - h(x))$ as $n \to \infty$ for all $h$. Since $\varphi_n$ is bounded by one and $h(x)$ is a standard Gaussian, it follows from the Dominated Convergence Theorem that the right hand side in~\eqref{e:2.35a} tends to $-2\rmE_1^{\varphi_{\infty,u}} h(x)$. 
	As $\varphi_{n, u}$ is continuous in $u$ and bounded by $1$, it follows again by the Dominated Convergence Theorem that $-2\rmE_1^{\varphi_{n,u}} h(x)$ is continuous in $u$ for all $n \in [0,\infty]$. 
	
	Thanks to Lemma~\ref{lemma:FKG} we know that the law $\rmP_n(-\,|\, \Omega_n(u))$ is FKG for all $u \in \bbR$. This implies in particular that $\rmE_n \big(h(x) \,|\, \Omega_n(u)\big)$ is monotone increasing in $u$ and therefore that the right hand side of~\eqref{e:2.35a} is decreasing in $u$. It follows that the convergence is uniform on compacts as desired. This implies in particular that the left-hand side in~\eqref{e:2.33} converges uniformly on compacts. This allows to interchange derivative and limit, which implies \eqref{e:2.33}. ~\eqref{e:2.37b} follows from the first part of~\eqref{e:2.35a} via 
	Proposition~\ref{l:2.8a} for all $n < \infty$ and therefore also for $n = \infty$.
\end{proof}

We shall also need estimates and asymptotics for the left tail of the centered minimum. To this end, we define for $n \geq 1$ and $u \in \bbR$, 
\begin{equation}
\label{e:103.24}
	q_n(u) := 1 - p_n(-u) = \rmP_n \Big(\min_{\bbL_n} h \leq -m_n - u \Big) \,.
\end{equation}
We then have,
\begin{lemma}[\cite{bramsondingoferbrw}, Proposition~3.1]
\label{l:2.10}
There exist $C,c \in (0,\infty)$ such that for all $n \geq 1$, $u \geq 0$,
\begin{equation}
q_n(u)
\leq C(u+1) \rme^{-c_0 u}  \,, 
\end{equation}	
and for all $n \geq 1$, $u \in [0, n^{1/2}]$,
\begin{equation}
q_n(u)
\geq c (u+1) \rme^{-c_0 u} \,.
\end{equation}	
Moreover, there exists $C_0 \in (0,\infty)$ such that
\begin{equation}
\lim_{u \to \infty} \limsup_{n \to \infty}
\Bigg|\frac{q_n(u)}{C_0 u \rme^{-c_0 u}} - 1 \Bigg| = 0 \,.
\end{equation}	
\end{lemma}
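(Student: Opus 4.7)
The plan is to establish the two-sided bounds by a first/second-moment argument on a counting statistic that implements the appropriate ballot constraint, and then extract the sharp asymptotic from the convergence in law~\eqref{e:01.13} combined with the known heavy-tail behavior of the derivative martingale limit $Z$.

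For the \textbf{upper bound}, I would use the many-to-one lemma to reduce
\begin{equation*}
\rmE\,\#\big\{x \in \bbL_n :\: h(x) \leq -m_n - u,\ h([x]_k) \geq -m_n - u - \alpha_k\ \forall k \leq n\big\}
\end{equation*}
to a single-particle barrier probability for a centered Gaussian random walk $(S_k)_{k\geq 0}$, where $\alpha_k$ is a mild ``curved'' correction (e.g., $\alpha_k = \log((k \wedge (n-k))\vee 1)$) chosen so that the event $\{\min_{\bbL_n} h \leq -m_n - u\}$ is captured up to an exponentially small error while the many-to-one first moment stays tight. The standard ballot estimate for Gaussian walks, together with $m_n = c_0 n - \tfrac{3}{2c_0}\log n$ (whence $2^n \rme^{-m_n^2/(2n)} \asymp n^{3/2}$), yields the bound $C(u+1)\rme^{-c_0 u}$ via Markov's inequality.

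For the \textbf{lower bound} in the range $u \leq \sqrt{n}$, I would apply Paley--Zygmund to the truncated count
\begin{equation*}
\tilde N_n(u) := \#\big\{x \in \bbL_n :\: h(x) \in [-m_n - u - 1, -m_n - u],\ \text{barrier as above}\big\}.
\end{equation*}
The first moment again matches $\asymp (u+1)\rme^{-c_0 u}$. For the second moment, I would split the double sum over pairs $(x,y) \in \bbL_n^2$ by the generation $k = |x \wedge y|$ of the most recent common ancestor, use the conditional independence of the two subtree-contributions given $h([x]_k)$, and integrate out the splitting height. A careful bookkeeping shows $\rmE \tilde N_n(u)^2 \leq C\,\rmE \tilde N_n(u)$, with the constraint $u \leq \sqrt{n}$ ensuring that the intermediate Gaussian densities stay in the diffusive regime so the ballot estimates remain sharp. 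Paley--Zygmund then yields $q_n(u) \geq c(u+1)\rme^{-c_0 u}$.

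For the \textbf{precise asymptotic}, the convergence in law~\eqref{e:01.13} gives
\begin{equation*}
\lim_{n\to\infty} q_n(u) = \rmP\big(G + c_0^{-1}\log Z \geq u\big) = \rmE\big[1 - \exp(-Z\rme^{-c_0 u})\big],
\end{equation*}
using $\rmP(G \geq g) = 1 - \exp(-\rme^{-c_0 g})$ for Gumbel $G$ of rate $c_0$ independent of $Z$. Invoking the known tail $\rmP(Z > z) \sim \kappa/z$ as $z \to \infty$ for the derivative martingale limit, the identity $\rmE[1 - \rme^{-aZ}] = a \int_0^\infty \rme^{-at}\,\rmP(Z > t)\,\rmd t$ with $a = \rme^{-c_0 u}$ gives the asymptotic $\sim a\kappa\log(1/a) = C_0\,u\,\rme^{-c_0 u}$ with $C_0 = \kappa c_0$. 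The main obstacle is the sharp ballot/barrier estimate with the correct $(u+1)$ prefactor, which requires either a reflection-principle computation with a curved barrier or an appeal to a Denisov--Wachtel type estimate for walks conditioned to stay positive. The second-moment bookkeeping for the lower bound is likewise delicate, as it is precisely where the restriction $u \leq \sqrt{n}$ becomes essential.
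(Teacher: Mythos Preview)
The paper does not prove this lemma at all: it is quoted directly from the literature (the header reads ``[\cite{bramsondingoferbrw}, Proposition~3.1]''), and no proof is given in the present manuscript. So there is nothing to compare your argument against here.

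That said, your outline for the upper and lower bounds is exactly the standard route taken in the cited reference: a many-to-one reduction plus a ballot-type barrier estimate for the first moment, and a truncated second moment with the pair sum split according to the branching depth $|x\wedge y|$ for the lower bound. This is correct in spirit, and the restriction $u\leq\sqrt{n}$ is indeed precisely where the Gaussian local estimates stay in the diffusive window.

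For the sharp asymptotic, your argument via~\eqref{e:01.13} and the tail $\rmP(Z>z)\sim\kappa/z$ of the derivative martingale limit is valid and clean, but note that it imports a result (the Madaule/Buraczewski-type tail for $Z$) that is at least as deep as the statement you are trying to prove; the cited reference obtains the asymptotic more directly from a sharpened second-moment computation, without passing through the limit law. Your route is shorter to write but logically heavier in its inputs.
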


\subsection{Two representations of the conditional measure}
In the sequel, we will make frequent use of the following two representations of the measures $\rmP_n(-|\Omega_n(u)$ for $u \in \bbR$.
For what follows,  the subtree of $\bbT$ rooted at $x \in \bbT$ will be denoted by $\bbT(x)$, with the restriction of which to vertices at distance at most and exactly $n$ from $x$ denoted by $\bbT_n(x)$ and $\bbL_n(x)$ respectively.

\subsubsection{Doob's $h$-transformed BRW}
The first representation follows by conditioning on the values of $h$ on $\bbL_k$ for $k \in [0,n]$.  
Indeed, then the Gibbs-Markovian structure of the field gives for each $u \in \bbR$,
\begin{equation}
\label{e:2.41e}
	p_n(u) = \rmE_k \varphi_{n,k,u}(h)
\end{equation}
where
\begin{equation}
\label{e:2.40d}
\varphi_{n,k,u}(h) := \prod_{x \in \bbL_k} p_{n-k} \big(-h(x) + u - m_n + m_{n-k} \big) \,.
\end{equation}
In particular, we get the representation
\begin{equation}
\label{e:2.32d}
	\rmP_n \big(h_{{\bbT_k}} \in \cdot \,\big|\, \Omega_n(u)\big) = \rmP_k^{\varphi_{n,k,u}}(\cdot) \,,
\end{equation}
where we recall the measure change notation from~\eqref{e:2.10a}. Notice that $m_n - m_{n-k} = c_0 k + O(k/n)$.

\subsubsection{Random walk under a pinning potential}
\label{s:2.6.2}
Turning to the second representation, for $n \geq 0$, let us denote by 
$\wh{\bbT}$ the ``half'' subtree of $\bbT$ obtained by removing one (arbitrary) child of the root and all of its descendants. $\wh{\bbT}_n$ and $\wh{\bbL}_n$ will be defined analogously. 
For $u \in \bbR$, we define the event $\wh{\Omega}_n(u) \subset \bbR^{\wh{\bbT}_n }$ as
\begin{equation}
	\wh{\Omega}_n(u) := \Big \{ \min_{x \in \wh{\bbL}_n} h(x) \geq -m_n + u \Big\}
\end{equation}
and set
\begin{equation}
\label{e:102.44}
	\hat{p}_n(u) = \rmP_n\big(h_{{\wh{\bbT}_n}} \in \wh{\Omega}_n(u) \big)  \,.
\end{equation}
By independence of the restriction of $h$ to the two direct sub-trees of the root, we have
\begin{equation}
\label{e:102.45}
	p_n(u) = \big(\hat{p}_n(u)\big)^2 \,.
\end{equation}

Now, consider the branch $([x]_k)_{k=0}^l$ for some $x \in \bbL_l$. This subgraph of $\bbT$ can then be identified with the linear graph $[0,l] =\{0, \dots, l\}$, via the natural isomorphism $k \leftrightarrow [x]_k$. Consequently the restriction of $h$ to $([x]_k)_{k=0}^l$ has the law of a DGFF on $[0,l]$ with $0$ boundary condition at $0$, which we shall henceforth denote by $\ol{\rmP}_l$. This is also the law of the trajectory of a random walk with centered Gaussian steps, starting from $0$ at time $0$ and run until time $l$. If $n \geq l$ and $u \in \bbR$, we can use this branch to write $\Omega_n(u)$ as 
\begin{equation}
	\bigcap_{k=0}^{l-1} \Big\{h_{{\bbT_{n-k}([x]_k) \setminus \bbT_{n-k-1}([x]_{k+1})}} \in \wh{\Omega}_{n-k}\big(u-m_n+m_{n-k}\big)\Big\} 
	\cap \Big\{h_{{\bbT_{n-l}([x]_l)}}\in \Omega_{n-l}\big(u-m_n+m_{n-l}\big)\Big\} \,.
\end{equation}
It follows from the Gibbs-Markovian structure of $h$ that the measure $\rmP_n(-\,|\,\Omega_n(u))$ can be written, via Definition~\ref{e:2.10a}, as
\begin{equation}
\label{e:2.43a}
	\rmP_n \Big(\big(h([x]_k\big)_{k=0}^l \in \cdot \, \Big|\,\Omega_n(u)\Big) = \ol{\rmP}_l^{\varphi_{n,l,u}}\big(\cdot\big)\,,
\end{equation}
where this time,
\begin{equation}
\label{e:2.38a}
	\varphi_{n,l,u}(h) :=  \prod_{k=1}^{l} \big(\hat{p}_{n-k}\big(u-h(k)-m_n+m_{n-k}\big)\big)^{1+\1_l(k)}
		 = \prod_{k=1}^{l} p^{\frac12 (1+\1_l(k))}_{n-k}\big(u-h(k)-m_n+m_{n-k}\big) \,.
\end{equation}
Observe that the products above may just as well start from $1$ instead of $0$ as the term corresponding to $k=0$ is a constant that gets cancelled with the normalization of the measure. In the sequel we shall sometimes write $\varphi_{n,u}$ if $n=l$ or omit some of the subscripts of $\varphi_{n,l,u}$ to ease some notational burden.

\subsection{Bounds under typical conditioning on the minimum}
In this subsection we include versions of standard estimates for the BRW, albeit where the process is conditioned on having its minimum being above a typical level, namely under the conditioning on $\Omega_n(u)$ for a fixed $u \in \bbR$. The first lemma is the key tool in the proof for such results.
\begin{lemma}
\label{l:103.10}
Let $n \geq 1$, $x_n \in \bbL_n$ and $\xi_n$ measurable w.r.t. $h$ on $\bbT_n$. For each $k \in [0,n]$ set $x_k := [x]_k$ and
\begin{equation}
\xi_k := \rmE_n \big(\xi_n\,\big|\, h\big(\{x_k\} \cup \{\bbT_n \setminus \bbT(x_k)\} \big)\big) \,.
\end{equation}
Then for all $m \geq 0$,
\begin{equation}
\label{e:203.52}
	\Big|\rmE_n \big(\xi_n\,\big|\, \Omega_n(u)\big) - \rmE_n\big( \xi_m |\Omega_n(u\big) \Big| 
	\leq C\big(\rme^{C u^2} + 1\big) \sum_{k=m}^n 
		k \rme^{-c_0 k} \rmE_n \Big((h(x_k)^++1) \rme^{-c_0 h(x_k)} |\xi_k|\Big)
\end{equation}
\end{lemma}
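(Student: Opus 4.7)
The plan is to telescope along the ancestor branch $(x_k)_{k=m}^{n}$, interpret each piece as a martingale covariance against $\1_{\Omega_n(u)}$, and control it using the sharp left-tail bound $q_j(w)\leq C(w^++1)\rme^{-c_0 w^+}$ from Lemma~\ref{l:2.10}. First I would introduce the filtration $\FF_k:=\sigma\big(h(\{x_k\}\cup\bbT_n\setminus\bbT(x_k))\big)$: the nesting $\bbT(x_{k+1})\subsetneq\bbT(x_k)$ gives $\FF_k\subset\FF_{k+1}$, so both $(\xi_k)$ and $g_j:=\rmE_n(\1_{\Omega_n(u)}\mid\FF_j)$ are $(\FF_k)$-martingales under $\rmP_n$. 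Orthogonality of martingale increments then yields
\[
\rmE_n\big[(\xi_n-\xi_m)\1_{\Omega_n(u)}\big]=\sum_{k=m}^{n-1}\rmE_n\big[(\xi_{k+1}-\xi_k)(g_{k+1}-g_k)\big].
\]
Dividing by $\rmP_n(\Omega_n(u))\geq c\rme^{-Cu^2}$ (which follows from the lower bound in Lemma~\ref{l:4.2a}) accounts for the $\rme^{Cu^2}$ prefactor in the statement.

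\paragraph{Markov decomposition and tail bound.}
Next I would decompose $\Omega_n(u)=A_k\cap B_k$, with $A_k\in\FF_k$ the positivity event on $\bbL_n\setminus\bbL_n(x_k)$ and $B_k$ the positivity event on $\bbL_n(x_k)$. By the Markov property of the BRW together with the conditional independence of the two children sub-trees of $x_k$ given $h(x_k)$,
\[
g_{k+1}-g_k=\1_{A_k}\Big[\1_{C_k}\,p_{n-k-1}(v_{k+1}-h(x_{k+1}))-p_{n-k}(v_k-h(x_k))\Big],
\]
with $v_j:=u-(m_n-m_{n-j})$ and $C_k\in\FF_{k+1}$ the positivity event on the sibling sub-tree of $x_{k+1}$ in $\bbT(x_k)$. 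Writing each $p$-factor as $1-q$, applying Lemma~\ref{l:2.10} together with $1-\hat p_{n-k}(v)\leq q_{n-k}(-v)$, and using $v_k=u-c_0 k+O(\log n)$ (so that $\rme^{c_0 v_k}\leq C\rme^{c_0 u}\rme^{-c_0 k}$ after absorbing the extra decay $\rme^{-(c_0^2-c_0)k}$ into constants via $c_0^2>c_0$), I obtain the $\FF_k$-measurable bound
\[
\rmE_n\big[|g_{k+1}-g_k|\,\big|\,\FF_k\big]\leq C\rme^{c_0 u}\,k\,\rme^{-c_0 k}\,(h(x_k)^++1)\,\rme^{-c_0 h(x_k)}.
\]

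\paragraph{Reducing to $|\xi_k|$ and main obstacle.}
I would then bound $|\rmE_n[(\xi_{k+1}-\xi_k)(g_{k+1}-g_k)]|\leq\rmE_n[|\xi_{k+1}-\xi_k|\cdot|g_{k+1}-g_k|]$ and condition on $\FF_k$. Using $|\xi_{k+1}-\xi_k|\leq|\xi_k|+|\xi_{k+1}|$, the $|\xi_k|$ piece is $\FF_k$-measurable and the previous estimate immediately produces the claimed factor $\rmE_n[(h(x_k)^++1)\rme^{-c_0 h(x_k)}|\xi_k|]$. For the $|\xi_{k+1}|$ piece, which a priori only gives a bound in terms of $\rmE_n[|\xi_n|\mid\FF_k]$ via Jensen's inequality, one must reduce back to $|\xi_k|$. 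This is the main technical obstacle: I would address it by combining the conditional independence of the two children sub-trees of $x_k$ given $h(x_k)$ (which makes $g_{k+1}-g_k$ a functional of only the ``new'' randomness beyond $\FF_k$) with the FKG property of Lemma~\ref{lemma:FKG}, the former allowing a product-decomposition of the conditional covariance and the latter providing the sign-control needed to dominate the $|\xi_{k+1}|$ contribution by a bounded multiple of the $|\xi_k|$ one. Summing over $k$ and absorbing $\rme^{c_0 u}$ together with $\rmP_n(\Omega_n(u))^{-1}$ into the prefactor $\rme^{Cu^2}+1$ completes the argument. The reason a naive Cauchy--Schwarz is insufficient is precisely that it would produce a bound involving $\|\xi_n\|_2$ rather than the sharper $|\xi_k|$; the local Gaussian structure of the BRW, exposed by the Markov decomposition, is what permits the tighter bound.
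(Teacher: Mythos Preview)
Your telescoping via martingale orthogonality is valid, and your bound on $\rmE_n\big[|g_{k+1}-g_k|\,\big|\,\FF_k\big]$ is essentially correct. However, your ``main obstacle'' is a genuine gap, and the proposed fix does not work. To obtain the claimed estimate you must control $\rmE_n\big[|\xi_{k+1}|\,|g_{k+1}-g_k|\big]$ by something of the form $k\rme^{-c_0 k}\,\rmE_n\big[(h(x_k)^++1)\rme^{-c_0 h(x_k)}|\xi_k|\big]$; but $g_{k+1}-g_k$ is already $\FF_{k+1}$-measurable (it depends on the sibling subtree of $x_{k+1}$ and on $h(x_{k+1})$), so conditioning cannot separate $|\xi_{k+1}|$ from $|g_{k+1}-g_k|$. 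FKG does not rescue this: FKG concerns monotone functionals, whereas $|\xi_{k+1}|$ is not monotone for a generic $\xi_n$, and the lemma is stated for \emph{arbitrary} measurable $\xi_n$. There is no reason, in general, for $\rmE_n[|\xi_{k+1}|\cdot W]$ to be dominated by a constant times $\rmE_n[|\xi_k|\cdot W']$ for the relevant weights.

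The paper sidesteps this problem by telescoping the \emph{event} rather than the martingale. It introduces nested events $\Omega_n^j(u)$ which impose positivity only on the leaves outside the subtree $\bbT(x_{j+1})$, so that $\Omega_n^m(u)\supset\Omega_n^{m+1}(u)\supset\cdots\supset\Omega_n^n(u)=\Omega_n(u)$. On each layer $\Omega_n^{j-1}(u)\setminus\Omega_n^j(u)$ one conditions on $\FF_j$; since $\Omega_n^{j-1}(u)\in\FF_j$, this produces $|\xi_j|$ \emph{directly}, multiplied by the conditional probability that some leaf in the sibling half-tree of $x_{j+1}$ fails, which is at most $1-\hat p_{n-j}(v_j-h(x_j))\le q_{n-j}(h(x_j)-v_j)$, and Lemma~\ref{l:2.10} applies. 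The same bound is then repeated with $\xi_m$ in place of $\xi_n$ (noting $\rmE_n[\xi_m\mid\FF_j]=\xi_m$ for $j\ge m$), and the two pieces are glued via the Tower identity $\rmE_n(\xi_n;\Omega_n^m(u))=\rmE_n(\xi_{m+1};\Omega_n^m(u))$ (the paper writes $\xi_m$ here, a harmless off-by-one absorbed in the $k=m$ term of the sum). The key point is that by peeling the \emph{indicator} you never encounter $|\xi_{k+1}-\xi_k|$: the Tower Property delivers $|\xi_j|$ for free, and the only probabilistic input is the left-tail bound. If you want to salvage your approach, replace the martingale-increment identity by this event-peeling decomposition.
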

\begin{proof}
For all $u \in \bbR$ and $1 \leq m < n$, set
\begin{equation}
	\Omega_n^m(u) := \Big\{\min \big\{h(z) :\: z\in \bbL_n \setminus \bbL_{n-m-1}(x_{m+1}) \big\} \geq -m_n + u \Big\} \,,
\end{equation}
with $\Omega_n^n(u) \equiv \Omega_n(u)$.  Then,
\begin{equation}
\label{e:202.65b}
\begin{split}
\Big|\rmE_n \big(\xi_n ;\, \Omega_n^m(u) \big) - \rmE_n \big(\xi_n;\, \Omega_n(u) \big)\Big|
& = \Big|\rmE_n \big(\xi_n ;\, \Omega_n^m(u) \setminus \Omega_n^n(u) \big)\Big| \\
& \leq \sum_{j=m+1}^{n}
	\Big|\rmE_n \big(\xi_n ;\, \Omega_n^{j-1}(u) \setminus \Omega_n^{j}(u) \big)\Big| \\
& \leq \sum_{j=m+1}^n \rmE_n 
	\Big(\big|\xi_j\big| \Big(1-\hat{p}_{n-j}(-m_n + m_{n-j} - h(j) + u)\Big)\Big)\Big) \,,
\end{split}
\end{equation}	
where we recall the definition of $\hat{p}_n$ from~\eqref{e:102.44}. 
For the last step we conditioned on $h$ on $\{x_j\} \cup \bbT \setminus \bbT(x_j)$ 
 and used that
\begin{equation}
\Omega_n^{j-1}(u) \setminus \Omega_n^{j}(u) \subseteq 
	\Big\{\min \big\{h(z) :\: z \in \bbL_{n-j}(x_j) \setminus \bbL_{n-j-1}(x_{j+1}) \big\} \leq -m_n + u \Big\} \,,
\end{equation}
with the event on the left hand side independent of $\xi_n$, under this conditioning.

Using that $\hat{p}_n(u) = p^{1/2}_n(u) \geq p_n(u)$ and $-m_n+m_{n-j}\geq c_0 j$ and also Lemma~\ref{l:2.10}, we may upper bound the second term inside the last mean by  a constant times
\begin{equation}
	 \rmE_n \Big( \big(h(x_j)+c_0 j-u\big)^++1\Big) \exp \Big(-c_0 \big(h(x_j) + c_0 j - u)\big)^+\Big)
\leq C' \rmE_n \big(\rme^{c_0 u} + 1\big) j \rme^{-c_0 j} (h(x_j)^++1) \rme^{-c_0 h(x_j)} \,.
\end{equation}
This gives
\begin{equation}
\label{e:202.65a}
\Big|\rmE_n \big(\xi_n ;\, \Omega_n^m(u) \big) - \rmE_n \big(\xi_n;\, \Omega_n(u) \big)\Big|
\leq 
C \big(\rme^{c_0 u} + 1\big) \sum_{j=m+1}^{n-1}  j \rme^{-c_0 j} \rmE_n \big((h(x_j)^++1) \rme^{-c_0 h(j)} |\xi_j|\big) \,.
\end{equation}	
Using this bound with $\xi_m$ in place of $\xi_n$ and invoking the Tower Property, we obtain
\begin{equation}
\label{e:203.62}
\Big|\rmE_n \big(\xi_m ;\, \Omega_n^m(u) \big) - \rmE_n \big(\xi_m;\, \Omega_n(u) \big)\Big|
\leq 
C \big(\rme^{c_0 u} + 1\big) \sum_{j=m+1}^{n-1}  j \rme^{-c_0 j} \rmE_n \big((h(x_j)^+ +1)\rme^{-c_0 h(j)} |\xi_m|\big) \,.
\end{equation}

Using the Tower Property once more, the last mean is at most 
\begin{multline}
	\rmE_n \big((h(x_m)^+ + 1)\rme^{-c_0 h(x_m)} |\xi_m|\big)\, \rmE_n \big((h(x_j) - h(x_m))^+\big) \rme^{-c_0( h(x_j) - h(x_m))} \\\leq (j-m + 1) \rme^{(c_0^2/2)(j-m)} \rmE_n \big((h(x_m)^+ + 1)\rme^{-c_0 h(x_m)} |\xi_m|\big)\,.
\end{multline}
As $c_0 > c_0^2/2$, 
this makes the sum in~\eqref{e:203.62} at most $\rmE_n \big((h(x_m)^+ + 1)\rme^{-c_0 h(x_m)} |\xi_m|\big)$ times 
\begin{equation}
	(m+1) \rme^{-c_0 m} \sum_{j=1}^\infty (j+1)^2 \exp \big((c_0^2/2) - c_0)j\big)
\leq C m \rme^{-c_0 m} \,,
\end{equation}
and upper bounds the sum of the right hand sides of~\eqref{e:202.65a} and~\eqref{e:203.62} by the right hand side of~\eqref{e:203.52}. Using the Tower Property one last time, we may equate the first mean in the left hand sides of~\eqref{e:202.65a} and~\eqref{e:203.62}. Using the Triangle Inequality and dividing by $\rmP_n(\Omega_n(u))$ which is at least $c\rme^{-C(u^+)^2}$ by Lemma~\eqref{l:4.2a} gives the desired bound.
\end{proof}

\begin{proposition}
\label{l:2.8a}
There exists $C \in (0,\infty)$ such that for all $u \in \bbR$, $n \geq 1$, $x \in \bbT_n$,
\begin{equation}
\label{e:2.37a}
0 \leq \rmE_n \big(h(x) \,\big|\, \Omega_n(u)\big) < u^+ + C
\end{equation}
\end{proposition}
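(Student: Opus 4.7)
The plan is to obtain the lower bound via FKG, and to split the upper bound into the reference case $u=0$ (handled by Lemma~\ref{l:103.10}), the case $u>0$ (bootstrapped by the monotonicity of Lemma~\ref{l:2.4}), and the case $u<0$ (handled by stochastic domination coming from Lemma~\ref{lemma:FKG}). For the lower bound $\rmE_n(h(x)\,|\,\Omega_n(u))\geq 0$, I would apply FKG of $\rmP_n$ to the truncated pyramid $\varphi_M(h):=(h(x)\wedge M)\vee(-M)$, which is bounded, continuous and coordinate-wise non-decreasing, together with the non-decreasing indicator $\1_{\Omega_n(u)}$. This gives $\rmE_n(\varphi_M(h)\1_{\Omega_n(u)})\geq \rmE_n\varphi_M(h)\,\rmP_n(\Omega_n(u))=0$ by symmetry of $h(x)\sim N(0,|x|)$, and dominated convergence as $M\to\infty$ delivers the claim.

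For the reference upper bound $\rmE_n(h(x)\,|\,\Omega_n(0))\leq C$, I would extend $x$ to some $x_n\in\bbL_n$ with $[x_n]_{|x|}=x$ and apply Lemma~\ref{l:103.10} with $\xi_n=h(x)$ and $m=0$; a direct check gives $\xi_0=0$, $\xi_k=h([x_n]_k)$ for $k\leq|x|$, and $\xi_k=h(x)$ for $k>|x|$. At $u=0$ the prefactor $\rme^{Cu^2}$ in~\eqref{e:203.52} becomes a harmless constant, so it suffices to bound $\sum_{k=0}^n k\rme^{-c_0 k}\rmE_n\bigl((h([x_n]_k)^++1)\rme^{-c_0 h([x_n]_k)}|\xi_k|\bigr)$ uniformly in $n,x$. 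For $k\leq|x|$, using $h([x_n]_k)\sim N(0,k)$ directly gives a moment of order at most $Ck^2\rme^{c_0^2 k/2}$; for $k>|x|$, writing $h([x_n]_k)=h(x)+Z$ with $Z\perp h(x)$ and a short Gaussian-moment computation yields the same bound. Each summand is therefore at most $Ck^3\rme^{-(c_0-c_0^2/2)k}$, and since $c_0=\sqrt{2\log 2}$ satisfies $c_0^2/2=\log 2<c_0$, the series converges to a universal constant.

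To pass to general $u$, I would treat $u>0$ by Lemma~\ref{l:2.4} applied to the bounded non-decreasing non-negative approximant $\varphi_M(h)=(h(x)+M)\wedge(2M)$: it asserts that $v\mapsto\rmE_n(\varphi_M(h+v)\,|\,\Omega_n(u-v))$ is non-decreasing in $v$, and letting $M\to\infty$ reinterprets this as $w\mapsto\rmE_n(h(x)\,|\,\Omega_n(w))-w$ being non-increasing in $w$, so comparing $w=u$ to $w=0$ yields $\rmE_n(h(x)\,|\,\Omega_n(u))\leq u+C$. For $u<0$ the same monotonicity points the wrong way, so instead I would use that $\rmP_n(\,\cdot\,|\,\Omega_n(u))$ is FKG by Lemma~\ref{lemma:FKG}; since $\Omega_n(0)\subseteq\Omega_n(u)$ is an increasing event, further conditioning this FKG measure on $\Omega_n(0)$ stochastically increases it, and the resulting measure is precisely $\rmP_n(\,\cdot\,|\,\Omega_n(0))$, giving $\rmE_n(h(x)\,|\,\Omega_n(u))\leq\rmE_n(h(x)\,|\,\Omega_n(0))\leq C=u^++C$. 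The main obstacle will be the bookkeeping in the $u=0$ case: one must verify that the Gaussian moments appearing inside Lemma~\ref{l:103.10} only contribute a factor of order $k^{O(1)}\rme^{c_0^2 k/2}$, so that the rate $c_0-c_0^2/2>0$ dominates; all remaining steps are standard applications of FKG or of the Cameron--Martin-type monotonicity already encoded in Lemma~\ref{l:2.4}.
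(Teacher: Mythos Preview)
Your proof is correct and follows essentially the same route as the paper: FKG for the lower bound, Lemma~\ref{l:2.4} to reduce the upper bound from general $u$ to $u=0$ (with FKG of $\rmP_n(\cdot\,|\,\Omega_n(u))$ handling $u<0$), and Lemma~\ref{l:103.10} with $m=0$ for the $u=0$ case, where the summability hinges on $c_0^2/2<c_0$. One tiny cosmetic point: your approximant $\varphi_M(h)=(h(x)+M)\wedge(2M)$ is not non-negative on all of $\bbR^{\bbT_n}$ as Lemma~\ref{l:2.4} requires, so you should clip from below as well, e.g.\ $((h(x)+M)\vee 0)\wedge(2M)$; the paper glosses over this in the same way.
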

\begin{proof}
Thanks to FKG of $\rmP_n^+$ and Lemma~\ref{l:2.4},
\begin{equation}
0 = \rmE_n h(x) \leq \rmE_n \big(h(x) \,\big|\, \Omega_n(u)\big) \leq 
\rmE_n \big(h(x) \,\big|\, \Omega_n(u^+)\big) \leq
u^+ + \rmE_n \big(h(x) \,\big|\, \Omega_n(0)\big) \,.
\end{equation}
Hence we just need to show the upper bound in the case $u=0$.
To this end, for $0 \leq j \leq n$, let $x_n \in \bbL_n$, $x_j := [x_n]_j$ and $\xi_n^{(j)} := h(x_j)$. Notice that for $0 \leq k \leq n$,
\begin{equation}
	\xi^{(j)}_k  := \rmE_n \big(h(x_j)\,\big|\, h\big(\{x_k\} \cup \{\bbT_n \setminus \bbT(x_k)\} \big) = h(x_{j \wedge k}) \,.
\end{equation}
Then the mean in the sum in~\eqref{e:203.52} of Lemma~\ref{l:103.10}, with $\xi^{(j)}_k$ in place of $\xi_k$, is at most
\begin{multline}
	\rmE_n \Big((h(x_{k \wedge j})^+ + 1)(h(x_{k}) - h(x_{k \wedge j})^+ +1) 
		\rme^{-c_0 h(x_{k \wedge j}) - c_0 (h(x_k) - h({k \wedge j}))} |h(x_{k \wedge j})|\Big) \\
\leq C \big((k-j)^++1\big) \rme^{-(c_0^2/2) (k-j)} 
	\rmE_n \Big( |h(x_{k \wedge j})| + 1\big) \rme^{-c_0 h(x_{k \wedge j})}\Big)
\leq C' k^3 \rme^{-(c_0^2/2) k} \,,
\end{multline}
where we used the independent increments of $(h(x_k))_{k=0}^n$ under $\rmP_n$.
Since $c_0^2/2 < c_0$, the sum itself is uniformly bounded for all $n$ and $j$.
Invoking Lemma~\ref{l:103.10} with $m=u=0$ and using that $\rmE_n\big(\xi^{(j)}_0\,\big|\, \Omega_n(u)\big) = 0$, the result follows.
\end{proof}

Next we control the conditional variance.
\begin{proposition}
\label{l:2.13}
There exists $C < \infty$ such that for all $n \geq 1$, $v \in \bbR$ and $x \in \bbT_n$,
\begin{equation}
\big| \rmE_n \big(h(x)^2\,\big| \Omega_n(u) \big) - |x| \big| \leq C(u^2 + 1) \,.
\end{equation}
\end{proposition}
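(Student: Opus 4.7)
The plan is to establish matching upper and lower bounds on $\rmE_n(h(x)^2|\Omega_n(u))$ separately.

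For the \emph{upper bound}, I would observe that $\rmP_n(\cdot|\Omega_n(u))$ is log-concave, being the Gaussian $\rmP_n$ restricted to the convex set $\Omega_n(u)$. The Brascamp--Lieb variance inequality (extended to such conditional measures by a standard approximation of $\Ind_{\Omega_n(u)}$ by smooth log-concave penalties) then yields
\begin{equation*}
\rmVar_n(h(x)|\Omega_n(u)) \leq \rmVar_n(h(x)) = |x|.
\end{equation*}
Combined with Proposition~\ref{l:2.8a}, which gives $\rmE_n(h(x)|\Omega_n(u)) \leq u^+ + C$, this yields $\rmE_n(h(x)^2|\Omega_n(u)) = \rmVar_n + (\rmE_n h(x))^2 \leq |x| + (u^++C)^2 \leq |x| + C(u^2+1)$.

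For the \emph{lower bound}, I would apply Lemma~\ref{l:103.10} with $\xi_n := h(x)^2 - |x|$. For a fixed $x_n \in \bbL_n$ with $[x_n]_j = x$ and $j := |x|$, the martingale property of $(h(x_k)^2 - k)_{k \geq 0}$ along the Gaussian random walk $k \mapsto h(x_k)$ gives
\begin{equation*}
\xi_k = h(x_{k\wedge j})^2 - (k\wedge j), \qquad k=0,\dots,n,
\end{equation*}
so in particular $\xi_0 = 0$. Direct Gaussian tilting estimates --- essentially as in the proof of Proposition~\ref{l:2.8a} and exploiting $c_0 > c_0^2/2$ --- bound the sum on the right hand side of Lemma~\ref{l:103.10} by a constant uniformly in $n, j, u$. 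This yields $|\rmE_n(h(x)^2|\Omega_n(u)) - |x|| \leq C(\rme^{C(u^+)^2}+1)$, which for $u \leq 0$ simplifies to $\leq C \leq C(u^2+1)$, completing both bounds in this range.

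For $u > 0$, when $|x| \leq C'(u^2+1)$ the lower bound $\rmE_n(h(x)^2|\Omega_n(u)) \geq 0 \geq |x| - C(u^2+1)$ follows trivially (with $C \geq C'$). For $|x| > C'(u^2+1)$, I would reduce to the case $u = 0$ via the Cameron--Martin shift $h \mapsto h + \mu$, where $\mu$ is the discrete harmonic extension with $\mu(0)=0$ and $\mu_{\bbL_n}\equiv u$ (so $\mu(x) = u(1-2^{-|x|})/(1-2^{-n})$). By Lemma~\ref{l:1}, $\rmE_n(h(x)^2|\Omega_n(u))$ becomes the expectation of $(h(x)+\mu(x))^2$ under $\rmP_n(\cdot|\Omega_n(0))$ reweighted by $\rme^{-\langle h, \Delta\mu\rangle}$. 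Since $\Delta\mu$ is supported on $\{0\}\cup\bbL_n$ with coefficients of order $u \cdot 2^{-n}$ on $\bbL_n$, this tilt is a mild perturbation of $\rmP_n(\cdot|\Omega_n(0))$, and combining with second-moment bounds on $\sum_{x\in\bbL_n}h(x)$ under this base measure (themselves obtainable via a further application of Lemma~\ref{l:103.10}) yields the lower bound with an additive correction of order $\mu(x)^2 = O(u^2)$.

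The principal difficulty will be rigorously quantifying the Cameron--Martin perturbation for the lower bound when $u > 0$ uniformly in $n$ and $|x|$: this requires tight moment control on the leaf sum $\sum_{x\in\bbL_n}h(x)$ under $\rmP_n(\cdot|\Omega_n(0))$, of the same flavor as the martingale-type estimate for $\xi_n = h(x)^2 - |x|$ above.
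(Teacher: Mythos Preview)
Your upper bound via Brascamp--Lieb is correct and is a nice alternative to the paper's argument; the paper never invokes Brascamp--Lieb. Your lower bound for $u\le 0$ (Lemma~\ref{l:103.10} with $m=0$) and the trivial lower bound for $|x|\le C'(u^2+1)$ also match the paper.

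The genuine gap is the lower bound for $u>0$ and $|x|>C'(u^2+1)$. Your Cameron--Martin reduction rewrites $\rmE_n(h(x)^2\mid\Omega_n(u))$ as a ratio of $\rmP_n(\cdot\mid\Omega_n(0))$--expectations against the tilt $\exp\big(-\tfrac{u}{2^n-1}\sum_{z\in\bbL_n}h(z)\big)$, but you still have to show that the tilted second moment of $h(x)$ stays within $O(1)$ of $|x|$. This does not follow from moment bounds on the leaf sum alone: you would need to control the \emph{correlation} between $h(x)^2$ and the leaf sum under $\rmP_n(\cdot\mid\Omega_n(0))$, uniformly in $n$ and $|x|$, which is at least as hard as the original problem. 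You flag this as ``the principal difficulty'' but do not resolve it, so the argument is incomplete.

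The paper avoids this entirely by a much simpler device: instead of invoking Lemma~\ref{l:103.10} with $m=0$, it takes $m=\lceil Cu^2\rceil$. The sum on the right of~\eqref{e:203.52} starts at $k=m$ and decays like $\rme^{-(c_0-c_0^2/2)k}$, so for $m>Cu^2$ with $C$ large enough the prefactor $\rme^{Cu^2}$ is absorbed and the whole right side is $O(1)$. This gives the two-sided bound
\[
\big|\rmE_n(h(x_j)^2\mid\Omega_n(u))-j\big|\le \rmE_n(h(x_m)^2\mid\Omega_n(u))+m+C'
\]
for $j\ge m$, reducing everything to a direct bound on $\rmE_n(h(x_m)^2\mid\Omega_n(u))$ at the fixed shallow depth $m\asymp u^2$. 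That bound is then obtained elementarily: $\rmE_n((h(x_m)^-)^2\mid\Omega_n(u))\le \rmE_n(h(x_m)^-)^2=m/2$ by FKG (the conditioning is on an increasing event), and $\rmE_n((h(x_m)^+)^2\mid\Omega_n(u))\le C((u^+)^2+m)$ by Lemma~\ref{l:2.4}. This two-line argument replaces your entire Cameron--Martin step.
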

\begin{proof}
For each $1 \leq j \leq n$, let $x_n \in \bbL_n$ and set $x_j := [x_n]_j$ and $\xi_n^{(j)} := h(x_j)^2 - j$. Observe that for all $k \leq n$,
\begin{equation}
\xi_k^{(j)} := \rmE_n \Big(h(x_j)^2 - j\,\big|\, h\big(\{x_k\}\cup \{\bbT_n \setminus \bbT(x_k)\} \big)\Big)
= h(x_{k\wedge j})^2 - (k \wedge j) \,.
\end{equation}
As in the proof of Proposition~\ref{l:2.8a}, the mean on the right hand side of~\eqref{e:203.52}  in Lemma~\ref{l:103.10} is computed to be at most $Ck^4\rme^{-(c_0^2)k/2}$. Since $c_0^2/2 < c_0$, it follows from Lemma~\ref{l:103.10} that there exists $C < \infty$ such that whenever $m > C u^2$ and $j \geq m$,
\begin{equation}
\label{e:103.52}
	\big|\rmE_n\big( h(x_j)^2\,\big|\, \Omega_n(u)\big) - j \big| 
	\leq \rmE_n\big( h(x_m)^2\,\big|\, \Omega_n(u)\big) + m + C' \,.
\end{equation}

At the same time, by FKG and Lemma~\ref{l:2.4},
\begin{equation}
\rmE_n\big( \big(h(x_k)^+\big)^2\,\big|\, \Omega_n(u)\big) \leq  
\rmE_n\big( \big(u^++h(x_k)^+\big)^2\,\big|\, \Omega_n(u)\big)
\leq \frac{2((u^+)^2) + 2\rmE_n \big(h(x_k)^+\big)^2}
{\rmP_n\big(\Omega_n(0)\big)}
\leq C''\big((u^+)^2+k)
\end{equation}
and
\begin{equation}
 \rmE_n\big( \big(h(x_k)^-\big)^2\,\big|\, \Omega_n(u)\big) \leq  
 \rmE_n \big(h(x_k)^-\big)^2 = \frac{k}{2}.
\end{equation}
So that 
\begin{equation}
\label{e:203.55}
	\rmE_n\big( \big(h(x_k)\big)^2\,\big|\, \Omega_n(u)\big) \leq C''((u^+)^2 + k) \,.
\end{equation}

Now, given $n \geq 1$ and $u \in \bbR$, we let $m := \lceil C u^2 \rceil$. If $j \leq m$, we use~\eqref{e:203.55} with $k=j$. Otherwise, we use~\eqref{e:203.55} with $k = m$ and combine with~\eqref{e:103.52}. The result follows.
\end{proof}

\section{Random walk subject to a localization force}
\label{s:3}
In this subsection we collect results for a random walk which is weakly attracted to $0$. We shall consider two types of attractions. One represented as a measure change of the law of a random walk without drift via a pinning potential, and one by imposing a drift in the opposite direction, whenever the process is sufficiently away from zero.

\subsection{Localization via a potential}
\label{ss:3.1}
We recall that $\ol{\rmP}_l$ is the law of a random walk with standard Gaussian steps run until time $l \geq 1$ started from zero. Alternatively, it is a DGFF on the linear graph $[0,l]$ with unit conductances and $0$ boundary conditions at $0$. To specify a different boundary condition $u$ at $0$ we shall do so via formally conditioning on $\{h(0) = u\}$, i.e. we shall write $\ol{\rmP}_l(\cdot\,|\, h(0) = u)$.

As a potential we take a sequence of non-negative functions $g_k : \bbR \to \bbR$ for $k = 1, \dots ,l$. We shall assume that there are $a, b, D \in (0,\infty)$ such that for all $k \in [1,l]$ the following holds:
\begin{description}
	\item[A1] $g_k(u) \geq a |u|$ if $|u| > b$, and $|g_k(u)| \leq D$ if $|u| \leq b$ \,.
	\item[A2] $g_k$ is increasing for all $u > b$ and decreasing for $u < -b$. 
\end{description}
Letting $\psi: \bbR^{[0,l]} \to \bbR$ be defined via
\begin{equation}
	\psi(h) := \exp \Big(-\sum_{k=1}^l g_k\big(h(k)\big)\Big) \,,
\end{equation}
we shall consider the process $h$ under the law  $\ol{\rmP}^\psi_l$, which we shall refer to as a {\em random walk under potential} $\psi$.

The result that we shall need  is included in the next proposition, which shows that the height of the walk at all times is exponentially tight.
\begin{proposition}
\label{p:2.14}
T	here exists $C,c \in (0, \infty)$ which depend only on $a$,$b$ and $D$, such that for all $l \geq 1$, $u \geq 0$,
	\begin{equation}
		\ol{\rmP}^{\psi}_l \big( |h(l)| > u \big) \leq C \rme^{-cu} \,.
	\end{equation}
\end{proposition}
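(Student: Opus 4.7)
The plan is to analyze $h$ under $\ol{\rmP}_l^\psi$ as a time-inhomogeneous Markov chain whose one-step kernel is a Gaussian convolution tilted by the potentials $\rme^{-g_k}$, and to exploit that these potentials provide enough ``restoring force'' to keep the endpoint $h(l)$ exponentially tight uniformly in $l$. Since $\psi(h) = \prod_{k=1}^l \rme^{-g_k(h(k))}$ factors over time steps, the unnormalized endpoint density $f_k(v)\,\rmd v := \ol{\rmE}_k\big[\prod_{j=1}^k \rme^{-g_j(h(j))} ; h(k) \in \rmd v\big]$ satisfies the forward recursion $f_k(v) = \rme^{-g_k(v)}\,(\varphi * f_{k-1})(v)$, with $f_0 = \delta_0$ and $\varphi$ the standard Gaussian density. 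The law of $h(l)$ under $\ol{\rmP}_l^\psi$ is $f_l/Z_l$ with $Z_l := \int f_l$, so the task reduces to estimating relative tails of $f_l$.

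The key step is to fix some $c \in (0, a)$ and prove by induction on $k$ that the relative tail ratio
\[
R_k := \sup_{u \geq 0} \rme^{cu}\, \frac{\int_{|v|>u} f_k(v)\,\rmd v}{\int_{|v| \leq b} f_k(v)\,\rmd v}
\]
is bounded by a constant $C_*$ depending only on $(a,b,D)$. In the inductive step, one splits $(\varphi*f_{k-1})(v) = \int \varphi(v-w) f_{k-1}(w)\,\rmd w$ according to whether $w \in [-b,b]$ or not. The contribution from $|w| \leq b$ is bounded using Gaussian decay of $\varphi$ together with $\rme^{-g_k(v)} \leq \rme^{-a|v|}$ for $|v|>b$ (from (A1)), which produces the factor $\rme^{-au}$ in the tail; the contribution from $|w|>b$ is reduced, via the inductive hypothesis and the monotonicity in (A2), to a small multiple of the analogous integral for $f_{k-1}$. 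Choosing $c$ strictly less than $a$ ensures that the iteration is contractive, so $R_k \leq C_*$ propagates.

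Granted this uniform bound, one has
\[
\int_{|v|>u} f_l(v)\,\rmd v \leq C_*\rme^{-cu}\int_{|v|\leq b} f_l(v)\,\rmd v \leq C_*\rme^{-cu} Z_l,
\]
so $\ol{\rmP}_l^\psi(|h(l)|>u) \leq C_*\rme^{-cu}$ for $u\geq 0$, yielding the proposition. The main obstacle is closing the induction with constants independent of $k$ and $l$: convolution with $\varphi$ spreads mass outward and erodes exponential decay, while multiplication by $\rme^{-g_k(v)}$ sharpens it. The delicate point is to choose $c$ strictly below $a$ and track the accumulated multiplicative factors so that the restoring gain from (A1)--(A2) strictly dominates the Gaussian spreading, with every constant expressed solely in terms of the uniform parameters $(a,b,D)$ and therefore insensitive to the time-inhomogeneity of $(g_k)$.
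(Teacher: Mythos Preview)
Your approach is genuinely different from the paper's. The paper does not attempt an induction on the forward densities $f_k$. Instead, it first uses FKG (this is where (A2) enters) to dominate $\ol{\rmP}_l^\psi$ by $\ol{\rmP}_l^{\tilde\psi}$, where $\tilde\psi$ retains only those factors $\rme^{-g_k(h(k))}$ for which $h(k)\le b$ (plus the terminal one at $k=l$). It then writes $\ol{\rmP}_l^{\tilde\psi}(h(l)>u)$ as a ratio and bounds numerator and denominator separately via \emph{ballot-type estimates} for the Gaussian walk conditioned on its endpoint, extracting matching factors of $l^{-3/2}$ from both. The exponential decay in $u$ comes solely from the terminal factor $\rme^{-g_l(h(l))}$ and the restriction that the minimum of the walk not be too negative. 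No recursion on $k$ is used.

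Your inductive scheme, by contrast, has a real gap at the step you yourself flag as ``delicate''. Tracing through the recursion $f_{k+1}=\rme^{-g_{k+1}}(\varphi*f_k)$, one finds for the tail
\[
\int_{|v|>u} f_{k+1}(v)\,\rmd v \;\le\; C_2\,\rme^{-cu}\Big(1+C_*\rme^{-ab}\tfrac{2a-c}{a}\Big)\,M_k,
\]
while the only uniform lower bound available for the core mass is $M_{k+1}\ge \rme^{-D}c_0\,M_k$ with $c_0=\rmP(N\in[0,2b])$. Closing the induction $R_{k+1}\le C_*$ therefore requires
\[
C_2\,\rme^{D}c_0^{-1}\,\rme^{-ab}\,\tfrac{2a-c}{a}\;<\;1,
\]
and this simply fails when $D$ is large and $a,b$ are moderate (take $a=0.1$, $b=1$, $D=10$). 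Varying $c\in(0,a)$ changes the factor $(2a-c)/a$ only between $1$ and $2$, so the obstruction is genuine: a single application of the potential cannot compensate the loss $\rme^{D}c_0^{-1}$ incurred in comparing $M_{k+1}$ to $M_k$. Your invocation of (A2) does not resolve this; the monotonicity only gives $\rme^{-g_k(w)}\le \rme^{-a|w|}$ for $|w|>b$, which is already how (A1) is being used. A repair would require either iterating over blocks of $m\gg D/(ab)$ steps to accumulate enough potential, or a Lyapunov-function argument for the (Doob-transformed) Markov chain---both substantially more than what is sketched.
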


\begin{proof}
By the Union Bound and symmetry, it is sufficient to show exponential decay for the upper 
tail probability of $h(l)$. Also, since adding a constant to $g_k$ does not change the measure, we may assume that all $g_k$-s are positive. 
	Let
	\begin{equation}
		\tilde{\psi}(h) := \exp \Big(- \sum_{k=1}^{l} \Big(g_k\big(h(k)\big) \1_{\{h(k) \leq b \text{ or } k =l \}}  \Big)\Big) \,.
	\end{equation}
	Since $h(0) \equiv 0 $ and $g_k$ is non-negative and increasing on $(b,\infty)$, it follows that $\tilde{\psi}(h)/\psi(h)$ is non-decreasing as well. 
	At the same time, by~\cite[Corollary~1.7]{Herbst1991} (and a standard approximation argument of $g_k\big(s\big) \1_{\{s \leq b\}}$ using $C^\infty$ functions) we have that $\ol{\rmP}_l^{\psi}$ satisfies the FKG property. It follows that 
	$\ol{\rmP}^{\tilde{\psi}}_l$ stochastically dominates $\ol{\rmP}^{\psi}_l$ and so we might as well prove the upper bound with $\ol{\rmP}^{\tilde{\psi}}_l$ in place of 
	$\ol{\rmP}^{\psi}_l$. To this end, let $u \geq 0$ and write 
	\begin{equation}
		\label{e:4.35a}
		\ol{\rmP}_l^{\tilde{\psi}} \big(h(l) > u\big) = 
		\frac{ \ol{\rmE}_l \big(\tilde{\psi}(h) ;\; h(l) > u \big)}
		{ \ol{\rmE}_l \big(\tilde{\psi}(h) \big)}.
	\end{equation}
	
	Conditioning on the value of $\min_{k \leq l} h(k)$, we may upper bound the numerator by
	\begin{multline}
		\int_{u' = u}^\infty 
		\ol{\rmP}_l \big( h(l) \in \rmd u' \big)
		\bigg(
		\ol{\rmP}_l \big(\min_{k \leq l} h(k) \geq -(b+1) \, \big|\,  h(l) = u' \big) \\
		+\sum_{v = \lceil b \rceil}^\infty \rme^{-av}  \ol{\rmP}_l \big(\min_{k \leq l} h(k) \in [-(v+1), -v] \,\big|\,  h(l) = u' \big)
		\bigg) \rme^{-a u'  1_{\{u' \geq b\}}} \,,
	\end{multline}
	where the first exponential is due to the contribution of the minimal $h$ to $\tilde{\psi}$ and the last one is due to the contribution of $h(l)$.
	
	Since $(h(k))_{k=1}^l$ is a random walk with centered Gaussian steps, standard Ballot-Type estimates (see, for example,~\cite{CHL17Supp}) give 
	\begin{equation}
		\ol{\rmP}_l \big(\min_{k \leq l} h(k) \geq -(v+1) \,\big|\,  h(l) = u' \big) \leq C\frac{v(u'+v)}{l} \leq C\frac{v^2 u'}{l} \,. 
	\end{equation}
	for some $C < \infty$ universal. Then the integral can be upper bounded by 
	\begin{equation}
		C l^{-1}  \ol{\rmE}_l \big( \rme^{-a' h(l)} ;\; h(l) \geq u \big ) 
		= C' l^{-3/2} \rme^{-a'u} \int_{s \geq 0} \rme^{-a's - \frac{(s+u)^2}{2l}} \rmd s
		\leq C'' l^{-{3/2}} \rme^{-a'u} \,,
	\end{equation}
	for some $a' < a$ and $C, C', C'' < \infty$ all depending only on $a$ and $b$. 
	
	At the same time the denominator in~\eqref{e:4.35a} only decreases if we restrict the mean to the event $\big \{\min_{k \in [1,l-1]} h(k) > b \,,\, h(l) < b \big \}$. The probability of such an event is lower bounded by 
	\begin{equation}
		\ol{\rmP}_l \big(h(l) < b\big) \, \ol{\rmP}_l \Big(\min_{k \in [1,l-1]} h(k) \geq b \, \Big|\, h(1) =0 ,\, h(l) =0 \Big)
		\geq C l^{-1/2} l^{-1} \,,
	\end{equation}
	where we have again used standard Ballot estimates. But on such an event, the value of $\tilde{\psi}(h)$ is at least $\rme^{-D}$. This shows that the denominator is at least $C l^{-3/2}$ for some $C$ which depends on $a$,$b$ and $D$ only. Combined with the upper bound on the numerator, this shows the desired statement.
\end{proof}

\subsection{Localization via a drift}
\label{ss:3.2}
For what follows let $Y=(Y_k)_{k \geq 0}$ be a non-homogenous Markov process on $\bbR$ with transition kernel 
\begin{equation}
p_{k,v}(u) := \rmP \big(Y_{k+1} \in \rmd u \,\big|\, Y_k = v \big) = \rme^{\varphi_{k,v}(u)}\rmd u \,.
\end{equation}
For $w_0 \in \bbR$, we shall write $\rmP_{w_0}$ for the law of $Y$ when $Y_0 = w_0$ and omit the subscript if it is immaterial for the statement. To control the drift of $Y$, for $a \in (0,1)$ and $w \in \bbR$,  we let $l_{a,w}: \bbR \to \bbR$ be the piecewise linear decreasing function
\begin{equation}
	l_{a,w}(u) := 2a(u-w)^- - 2a^{-1}(u-w)^+ \,.
\end{equation}
We shall assume that there exist $d,D,b \in (0,\infty)$ and $a \in (0,1)$, such that under $\rmP(\cdot \,|\,Y_k = v)$, 
\begin{equation}
\label{e:4.20}
l_{a,\ul{d}_v}(u) \leq \varphi'_{k,v}(v+u) \leq l_{a^{-1},\ol{d}_v}(u)
\end{equation}
where $\ul{d}_v \leq \ol{d}_v$ for $v \in \bbR$ are real numbers satisfying:
\begin{description}
	\item[B1] $\ul{d}_v \geq d$ for all $v < -b$
	\item[B2] $\ol{d}_v \leq -d$ for all $v > b$.
	\item[B3] $|\ul{d}_v| \vee |\ol{d}_v| < |v| + D$ for all $v \in \bbR$.
\end{description}

The key proposition here is the following. Unless stated otherwise, all constants in the remaining of this subsection may (and will) depend on $a,D,d, b$, in addition to any other explicitly specified quantity.
\begin{proposition}
\label{p:4.2}
There exist $C,c \in (0,\infty)$ such that if $ad^2 > C$, then for all $n \geq 0$, $v,v' \in \bbR$ 
\begin{equation}
	\big\| \rmP_v(Y_n \in \cdot) - \rmP_{v'}(Y_n \in \cdot) \big\|_{\rm TV} \leq C\rme^{-(cn-|v|\vee |v'|)^+} \,.
\end{equation}
\end{proposition}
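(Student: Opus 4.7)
My approach is a coupling argument combining a Foster--Lyapunov drift estimate with a Doeblin-type minorization inside a bounded set. The idea is: both chains enter a compact set $I$ quickly; once inside, there is a uniform positive chance of coupling in one step; if it fails, the chains remain controlled and return to $I$ in $O(1)$ time. First, I would show that, starting from $v$, the hitting time $\tau_I := \inf\{k \geq 0 : Y_k \in I\}$ of $I := [-b^\ast, b^\ast]$ (for a large enough constant $b^\ast \geq b$) has an exponential moment of rate $\alpha|v|$. The derivative bounds~\eqref{e:4.20} force $p_{k,v}$ to be log-concave with Gaussian-type upper tails of variance $\Theta(a^{-1})$ and mode in $v + [\ul{d}_v, \ol{d}_v]$; combined with (B1)--(B3), this produces a drift of magnitude at least $d/2$ toward the origin whenever $|v| > b^\ast$, together with uniformly sub-Gaussian step fluctuations. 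A Lyapunov function $V(v) = \cosh(\alpha v)$ with $\alpha$ of order $ad$ then yields, for $ad^2$ large enough,
\[
\rmE \big[V(Y_{k+1}) \,\big|\, Y_k = v \big] \leq \rme^{-c_1} V(v) + C \1_{\{|v| \leq b^\ast\}},
\]
and standard Foster--Lyapunov arguments produce $\rmE_v[\rme^{\beta \tau_I}] \leq C \rme^{\alpha |v|}$ for some $\beta > 0$.

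Second, the same derivative bounds yield a uniform Doeblin minorization on $I$. For $u$ in any compact set $K$, integrating~\eqref{e:4.20} gives $|\varphi_{k,v}(v+u) - \varphi_{k,v}(v+\ol{d}_v)| \leq C(K)$ uniformly for $v \in I$; combined with the normalization $\int \rme^{\varphi_{k,v}} = 1$ and the Gaussian upper tail (which forces the modal density to be bounded below), this gives $p_{k,v}(u) \geq \gamma > 0$ uniformly for $u$ in a fixed compact set, all $v \in I$ and all $k \geq 0$. Equivalently, there is a probability measure $\nu$ such that $p_{k,v}(u) \rmd u \geq \gamma\, \nu(\rmd u)$ for all such $k$ and $v$.

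Finally, I would couple the two chains $(Y_k)$ and $(Y'_k)$ started from $v, v'$ as follows: run them independently until both lie simultaneously in $I$, at which point attempt a Doeblin coupling step (success probability $\geq \gamma$); if it fails, continue independently and repeat. By the first step, the first simultaneous entrance $\sigma$ satisfies $\rmP(\sigma > n) \leq C\rme^{-(cn - |v| \vee |v'|)^+}$; after $\sigma$, a renewal argument using again the Lyapunov estimate yields $\rmP(T - \sigma > m) \leq C\rme^{-cm}$ for the coupling time $T$. Combining gives $\rmP(T > n) \leq C\rme^{-(cn - |v| \vee |v'|)^+}$, and the standard coupling inequality $\|\rmP_v(Y_n \in \cdot) - \rmP_{v'}(Y_n \in \cdot)\|_{\rm TV} \leq \rmP(T > n)$ concludes. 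The main obstacle is calibrating the first step so that $\tau_I$ grows only \emph{linearly} in $|v|$: this is precisely where the hypothesis $ad^2 > C$ enters. Indeed, for a step of mean $\mu \leq -d$ and variance $\sim 1/(2a)$, the MGF bound is $\rme^{-\alpha d + \alpha^2/(4a)}$, whose optimum over $\alpha$ is attained at $\alpha = 2ad$ with value $\rme^{-ad^2}$; hence a geometric Lyapunov contraction is available exactly when $ad^2$ exceeds a numerical constant.
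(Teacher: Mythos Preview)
Your proposal is correct and follows essentially the same route as the paper: drift to a compact set, one-step Doeblin minorization there, and a renewal/coupling iteration, with the condition $ad^2>C$ entering for the same MGF reason. The only differences are in implementation---the paper controls the drift via stochastic domination of $|Y_{k+1}|-|Y_k|$ by a shifted half-Gaussian and a direct random-walk hitting-time bound (Lemmas~\ref{l:4.3} and~\ref{l:4.7a}) rather than a Lyapunov function, and is more explicit than you are about the \emph{simultaneous} return of both chains (handled by applying the drift to $W_k=|Y_k|\vee|Y'_k|$ in Lemma~\ref{l:4.5}) and about the single dependent step after a failed coupling attempt (second part of Lemma~\ref{l:4.5})---but these are routine to supply within your framework.
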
	

The proof of Proposition~\ref{p:4.2} will follow from the following two lemmas. For the first of the two, we 
let $Y$,$Y'$ be two independent copies of the Markov process described above with the same transition kernel. We shall write $\rmP_{v,v'}$ for $v,v' \in \bbR$ for the law under which these processes are defined and satisfy $Y_0=v$ and $Y'_0=v'$.
 Also, for $w > 0$ we let
\begin{equation}
\tau_w := \inf \{k \geq 0 :\: |Y_k| \vee |Y'_k| \leq w\} \,.
\end{equation}
\begin{lemma}
\label{l:4.5}
There exist $C, c \in (0,\infty)$ such that if $w > C$, $ad^2 > C$, then for all $v,v' \in \bbR$ \,,
\begin{equation}
\label{e:4.43a}
	\rmP_{v,v'} \big(\tau_w > k) \leq C \rme^{-(ck-|v|\vee |v'|)^+} \,.
\end{equation}
The same holds if $Y$ and $Y'$ evolve dependently only in the first step.
\end{lemma}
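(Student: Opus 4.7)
The proof is a standard Foster--Lyapunov drift argument, exploiting the Gaussian-type transition tails in \eqref{e:4.20} and the mean-reverting drifts (B1)--(B3), combined with independence of $Y$ and $Y'$ and a union bound. The starting point is that (B1)--(B2) give a drift of magnitude at least $d$ toward the origin once $|Y_k|>b$, while \eqref{e:4.20} forces the transition density to have sub-Gaussian tails of variance $O(1/a)$ around the drifted location, so $|Y_k|$ behaves like a random walk with negative drift when far from zero and exponential return-time bounds are then classical.

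Concretely, I would take $V(y):=\rme^{\lambda|y|}$ with the optimal choice $\lambda:=2ad$. Integrating the upper bound $\varphi'_{k,v}(v+u)\leq l_{a^{-1},\ol{d}_v}(u)$ starting from the mode gives a sub-Gaussian upper bound on the transition density, from which a direct computation yields
\[
\rmE\!\left[\rme^{\lambda(Y_{k+1}-v)}\mid Y_k=v\right]\leq C\exp\!\left(\lambda\,\ol{d}_v^+ - \lambda\,\ul{d}_v^- + \lambda^2/(4a)\right).
\]
Combining with (B1)--(B2), this gives $\rmE[V(Y_{k+1})\mid Y_k=v]\leq \rho\,V(v)$ for $|v|>b$ with $\rho:=C\rme^{-\lambda d + \lambda^2/(4a)}=C\rme^{-ad^2}$, which is strictly less than $1$ once $ad^2$ exceeds a universal constant; for $|v|\leq b$, condition (B3) together with the Laplace-transform estimate above gives $\rmE[V(Y_{k+1})\mid Y_k=v]\leq K$ for some $K=K(a,b,d,D)$. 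Iterating yields $\rmE_v V(Y_k)\leq \rho^k V(v) + K/(1-\rho)$, and by Markov's inequality and independence,
\[
\rmP_{v,v'}(\tau_w>k)\leq \rmP_v(|Y_k|>w) + \rmP_{v'}(|Y'_k|>w)\leq 2\rme^{-\lambda w}\!\left(\rho^k\rme^{\lambda M}+K/(1-\rho)\right),
\]
where $M:=|v|\vee|v'|$. Choosing $w$ so large that $2K\rme^{-\lambda w}/(1-\rho)\leq 1$, the residual term takes the form $C\rme^{-\lambda(w-M)+k\log\rho}$; a short case analysis (splitting $ck\leq M$ vs.\ $ck>M$, with $c$ proportional to $-\log\rho$ so that exponents match up to absorbable constants, and using the trivial bound $\rmP\leq 1$ in the former regime) then gives the target $C\rme^{-(ck-M)^+}$ with $C,c$ depending on $a,b,d,D$.

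For the addendum, I would condition on $(Y_1,Y'_1)$ and apply the Markov property: after time $1$ the two chains evolve independently, and (B3) together with the Gaussian tail in \eqref{e:4.20} ensures $|Y_1|\vee|Y'_1|\leq M+D+G$ with $G$ having exponential tails uniformly in $(v,v')$; applying the independent bound to the remaining $k-1$ steps and integrating out $G$ only inflates the multiplicative constant. The main subtlety lies in reconciling the natural Lyapunov exponent $\lambda=2ad$ (not $1$ in general) with the coefficient $1$ on $M$ in the target form $(ck-M)^+$; this is resolved by the freedom to let $C,c$ depend on all of $a,b,d,D$, so that $\rme^{-\lambda(ck-M)}$ can be absorbed into $C\rme^{-(c'k-M)^+}$ in both regimes $\lambda\geq 1$ (taking $c'=c$ directly) and $\lambda<1$ (taking $c'=\lambda c$, at the cost of a weaker rate).
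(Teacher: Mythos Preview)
There is a genuine gap in your union-bound step. You write
\[
\rmP_{v,v'}(\tau_w > k)\ \leq\ \rmP_v(|Y_k|>w) + \rmP_{v'}(|Y'_k|>w),
\]
which is a valid inclusion (since $\{\tau_w>k\}\subseteq\{|Y_k|\vee|Y'_k|>w\}$), but the right-hand side does \emph{not} decay in $k$. Your Lyapunov iteration $\rmE_v V(Y_k)\leq \rho^k V(v)+K/(1-\rho)$ carries the additive constant $K/(1-\rho)$, reflecting the fact that $Y$ has a nontrivial invariant measure; consequently $\rmP_v(|Y_k|>w)\to \text{(positive constant)}$ as $k\to\infty$. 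Your bound thus reads $\rmP_{v,v'}(\tau_w>k)\leq C\rme^{-\lambda(w-M)}\rho^k + \text{const}$, and the constant piece cannot be absorbed into $C\rme^{-(ck-M)^+}$, which must tend to $0$ as $k\to\infty$ for fixed $M$. Making the constant $\leq 1$ by choosing $w$ large, as you do, does not help.

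A natural attempted repair --- replacing $\rmP_v(|Y_k|>w)$ by the single-chain hitting-time bound $\rmP_v(\tau^Y_w>k)\leq \rho^k\rme^{\lambda(|v|-w)}$ obtained from the supermartingale $\rho^{-(k\wedge\tau^Y_w)}V(Y_{k\wedge\tau^Y_w})$ --- also fails, because $\{\tau_w>k\}$ is \emph{not} contained in $\{\tau^Y_w>k\}\cup\{\tau^{Y'}_w>k\}$: it can happen that $|Y|$ dips below $w$ at some time while $|Y'|$ is above, and vice versa at another time, so neither single chain ever hits $[-w,w]$ simultaneously with the other, yet both hit it individually. The paper resolves this by working with the \emph{joint} process $W_k:=|Y_k|\vee|Y'_k|$ directly: via Lemma~\ref{l:4.7a} each $|Y_{k+1}|-|Y_k|$ is stochastically dominated by $-d+N_{a,b}$ when $|Y_k|$ is large, and a union bound on the two independent increments (Lemma~\ref{l:103.5}) shows $W$ itself satisfies a one-sided drift condition of the type~\eqref{e:4.32}. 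Then $\tau_w$ is the genuine hitting time of $W$, and Lemma~\ref{l:4.3} (a Chernoff bound for a dominating random walk with strictly negative drift, which \emph{does} go to $-\infty$) gives the exponential decay. Your argument for the addendum (condition on the first step, use (B3) and Gaussian tails) is essentially what the paper does and is fine once the main statement is established.
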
	
\begin{lemma}
\label{l:4.7}
Let $Y$ be as above. Then for all $w > 0$ large enough there exists $\epsilon > 0$ such that for all $v,v'$ with $|v| \vee |v'| \leq w$ and all $k \geq 0$,
\begin{equation}
\label{e:4.60}
	\big\|\rmP(Y_{k+1} \in \cdot\,|\, Y_k = v) - 
		\rmP(Y_{k+1} \in \cdot\,|\, Y_k = v')\big\|_{\rm TV} < 1-\epsilon \,.
\end{equation}
In particular for all such $v, v'$, there exists a coupling $\wh{\rmP}_{v,v'}$ of $Y_{k+1}$, $Y'_{k+1}$ such that they have laws $\rmP(Y_{k+1} \in \cdot|Y_k=v)$ and $\rmP(Y_{k+1} \in \cdot|Y_k=v)$ respectively and  such that
\begin{equation}
\wh{\rmP}_{v,v'} \big(Y_{k+1} = Y'_{k+1} \big) > \epsilon \,.
\end{equation}
\end{lemma}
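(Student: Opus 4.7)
The plan is to show that, uniformly in $k\ge 0$ and $v,v'$ with $|v|\vee|v'|\le w$, the transition densities $p_{k,v}$ and $p_{k,v'}$ are both bounded below by a positive constant $c_w$ on a fixed compact interval $J=J(w)\subset\bbR$. The TV bound~\eqref{e:4.60} then follows from the elementary identity $1-\|\mu-\nu\|_{\rm TV} = \int\min(\mu,\nu)$, and the coupling claim from the classical maximal coupling construction.

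The first step uses~\eqref{e:4.20} together with (B3) to locate the mode of $p_{k,v}$ and to control its height. From the definition of $l_{a,\ul{d}_v}$ and $l_{a^{-1},\ol{d}_v}$, the derivative $\varphi'_{k,v}(v+\cdot)$ is nonnegative on $(-\infty,\ul{d}_v]$ and nonpositive on $[\ol{d}_v,\infty)$, so $\varphi_{k,v}$ attains its supremum $M$ at some point $u^\ast\in[v+\ul{d}_v,v+\ol{d}_v]$, which by (B3) lies inside $J := [-(2w+D),\,2w+D]$ whenever $|v|\le w$. Integrating the quadratic-decay consequences of~\eqref{e:4.20} on the outer branches and using $\varphi_{k,v}\le M$ on $[v+\ul{d}_v,v+\ol{d}_v]$, the normalization $\int e^{\varphi_{k,v}}\,du=1$ forces
\begin{equation*}
e^M \;\ge\; \bigl(\ol{d}_v-\ul{d}_v + \sqrt{\pi/a}\bigr)^{-1} \;\ge\; \bigl(2(w+D) + \sqrt{\pi/a}\bigr)^{-1}.
\end{equation*}

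The second step is a pointwise derivative bound: for $s\in J$ and $|v|\le w$, \eqref{e:4.20} yields $|\varphi'_{k,v}(s)|\le 2a^{-1}\max\{|s-v-\ul{d}_v|,\,|s-v-\ol{d}_v|\}\le 4a^{-1}(2w+D)$. Hence the dip from $M$ anywhere on $J$ is at most $4a^{-1}(2w+D)\cdot|J| = 8a^{-1}(2w+D)^2$, giving $p_{k,v}(u)\ge c_w$ on $J$ with $c_w := e^{-8a^{-1}(2w+D)^2}\bigl(2(w+D)+\sqrt{\pi/a}\bigr)^{-1}$. Integrating $\min(p_{k,v},p_{k,v'})\ge c_w$ over $J$ yields $1-\|p_{k,v}-p_{k,v'}\|_{\rm TV}\ge c_w|J|$, which is~\eqref{e:4.60} for any $\epsilon<c_w|J|$. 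The coupling $\wh{\rmP}_{v,v'}$ is then the standard maximal coupling: with probability $R:=\int\min(p_{k,v},p_{k,v'})\,du$, draw the common value of $Y_{k+1}=Y'_{k+1}$ from the law $R^{-1}\min(p_{k,v},p_{k,v'})$; otherwise draw $Y_{k+1},Y'_{k+1}$ independently from the disjoint residual densities $(1-R)^{-1}(p_{k,v}-\min(p_{k,v},p_{k,v'}))$ and $(1-R)^{-1}(p_{k,v'}-\min(p_{k,v},p_{k,v'}))$, which have disjoint support up to a null set.

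The only real delicacy is confirming that $c_w$ is genuinely uniform in $v,v',k$ rather than degrading as the mode of $p_{k,v}$ drifts toward the boundary of $J$; this is exactly what (B3) guarantees, by locking both the mode location and the window width $\ol{d}_v-\ul{d}_v$ inside quantities depending only on $w$ and $D$. Conditions (B1)--(B2) play no role here — they were needed for Lemma~\ref{l:4.5} but not for the short-time mixing statement of this lemma.
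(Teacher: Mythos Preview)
Your proof is correct and follows essentially the same approach as the paper: both use the identity $\|\mu-\nu\|_{\rm TV}=1-\int\min(\mu,\nu)$ and then establish a uniform pointwise lower bound on the transition density $p_{k,v}$ via~\eqref{e:4.20}, (B3), and the normalization constraint. The only packaging difference is that the paper invokes Lemma~\ref{l:4.6} to obtain a Gaussian-type lower bound $p_{k,v}(u)\ge c\,\rme^{-Cu^2}$ on all of $\bbR$, whereas you derive the constant lower bound on the compact interval $J$ directly---your argument is thus self-contained and your observation that (B1)--(B2) play no role here is also correct.
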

Let us first prove the proposition.
\begin{proof}[Proof of Proposition~\ref{p:4.2}]
Fix some $w > 0$ to be chosen later. We introduce the following coupling between two copies $Y$, $Y'$ of $Y$. Initially $Y_0 = v$ and $Y_0' = v'$ and we set $\tau_0 = -1$. For each round $m = 0, 1, 2, \dots $, if $Y_{\tau_m+1} = Y'_{\tau_m+1},$ we draw the two processes so that they stay equal from time $k=\tau_m+1$ on. Otherwise, we evolve the processes independently until the first time $\tau_{m+1}$ where
\begin{equation}
\tau_{m+1} := \inf \big\{k > \tau_m :\: |Y_k| \vee |Y'_k| \leq w \big\} \,.
\end{equation}
We now draw $Y_{\tau_{m+1}+1}$ and $Y'_{\tau_{m+1}+1}$ according to $\wh{\rmP}_{Y_{\tau_{m+1}},Y'_{\tau_{m+1}}}$ from Lemma~\ref{l:4.7}, set $m \leftarrow m+1$ and continue to the next round.

Let $M$ be the first $m \geq 0$ such that $Y_{\tau_m+1} = Y'_{\tau_m+1}$ (or infinity if no such first time exists). Set $\tau := \tau_M+1$ and observe that by construction $Y_k = Y'_k$ for all $k \geq \tau$. Now thanks to Lemmas~\ref{l:4.5} and~\ref{l:4.7}, if $w$ is large enough then
\begin{equation}
	\tau = 1 + \sum_{m=1}^M \big(\tau_m - \tau_{m-1}\big) \leq_s C(|v|\vee |v|' + 1) + \frac{1}{c} \sum_{k=1}^{\cG} \cE_k
\end{equation}
where $\cG$ is Geometric with parameter $\epsilon > 0$, $(\cE_k)_{k=1, \dots}$ are i.i.d 
with an exponentially decaying right tail, and $C,c \in (0,\infty)$ are constants -- all of which depend on $w$. We stress that $\cG$ and $(\cE_k)_{k \geq 0}$ are dependent. 
Then, with $n' := n - C(|v|\wedge |v|' + 1)$ we have
\begin{equation}
\begin{split}
\Big\|\rmP_v \big(Y_n \in \cdot)  - \rmP_{v'} & \big(Y'_n \in \cdot)  \Big\|_{\rm TV} 
\leq 
\rmP_{v,v'} \big(Y_n \neq Y'_n) \leq
\rmP_{v,v'} \big(\tau > n) 
\leq \rmP \Big(\sum_{k=1}^{\cG} \cE_k > c n'\Big) \\
& \leq \rmP \big(\cG > cn'/2\big) + \rmP \Big(\sum_{k=1}^{\lceil cn'/2 \rceil} \cE_k > c n'\Big)
\leq \rme^{-c\epsilon n'/2} + C\rme^{-c' n'/2} 
\leq C\rme^{-c''n'}
\end{split}
\end{equation}
as desired.
\end{proof}

Turning to the proof of the lemmas, we shall need the following two standard results. 
For what follows for $a,b > 0$, we denote by $N_{a,b}$ a random variable whose law is given by
\begin{equation}
	N_{a,b} \eqd (2a)^{-1} (N^+ + b) \,,
\end{equation}
where $N$ is a standard Gaussian. In particular, there exists $C >0$ such that for all $t\geq 0,$ $\rmP(N_{a,b} > t) \leq C \rme^{-a((t-b)^+)^2}.$ We shall write $N_{a}$ as a short for $N_{a,0}$.
\begin{lemma}
\label{l:103.5}
For all $b > 0$, $k \geq 1$, there exists $b' > 0$ such that for all $a > 0$ and all $t \geq 0$, 
\begin{equation}\label{eq:104.20}
	1 \wedge \big(k \rmP(N_{a,b} > t)\big)  \leq \rmP(N_{a,b'} > t) \,.
\end{equation}
\end{lemma}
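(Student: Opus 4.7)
The plan is to reduce the inequality to a tail statement about a single standard Gaussian and then pick the shift $b'-b$ so that Mills-type estimates close the gap. First I would unfold the definition: since $N_{a,b}\eqd (2a)^{-1}(N^+ + b)$ with $N$ standard Gaussian, we have $\rmP(N_{a,b}>t)=1$ for $t<b/(2a)$, while for $t\geq b/(2a)$, setting $s:=2at-b\geq 0$ one obtains $\rmP(N_{a,b}>t)=\rmP(N>s)$. Applying the same computation with $b'$ in place of $b$, the quantity $\rmP(N_{a,b'}>t)$ equals $1$ for $s<b'-b$ and equals $\rmP(N>s-\Delta)$ for $s\geq\Delta$, where $\Delta:=b'-b$. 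Crucially, $\Delta$ does not depend on $a$, so a choice of $b'$ that works for the standard Gaussian will work uniformly in $a$.

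I would then set $\Delta:=\Phi^{-1}(1-1/(2k))$ (so that $\rmP(N>\Delta)=1/(2k)$) and take $b':=b+\Delta$. The case $t<b/(2a)$ is trivial since both sides equal $1$, so the reduction leaves us with proving
\begin{equation*}
1\wedge\bigl(k\,\rmP(N>s)\bigr)\;\leq\;\rmP(N>s-\Delta)\,\mathbf 1_{\{s\geq\Delta\}}+\mathbf 1_{\{s<\Delta\}}\qquad(s\geq 0).
\end{equation*}
For $s<\Delta$ the right-hand side equals $1$, so there is nothing to do. For $s\geq\Delta$ the choice of $\Delta$ forces $\rmP(N>s)\leq\rmP(N>\Delta)=1/(2k)$, hence $k\,\rmP(N>s)\leq 1/2<1$, so the minimum equals $k\,\rmP(N>s)$ and the inequality becomes
\begin{equation*}
\log k\;\leq\;\log\rmP(N>s-\Delta)-\log\rmP(N>s)\;=\;\int_{s-\Delta}^{s}h(u)\,du,
\end{equation*}
with $h(u)=\phi(u)/\Phi(-u)$ the standard Gaussian hazard rate.

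The final step uses the well-known fact that the Gaussian density is log-concave, so $h$ is monotonically increasing on $\bbR$; consequently the integral above is monotone in $s$ and minimized at $s=\Delta$, where it equals $-\log\rmP(N>\Delta)+\log\rmP(N>0)=\log(2k)-\log 2=\log k$ by construction. This yields the desired inequality (with equality at the boundary $s=\Delta$). I do not see a genuine obstacle; the only point one must be careful about is choosing $\Delta$ large enough so that $s\geq\Delta$ already forces $k\rmP(N>s)<1$, which is exactly why the threshold $1/(2k)$ rather than $1/k$ appears in the definition of $\Delta$.
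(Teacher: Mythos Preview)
Your proof is correct and follows essentially the same reduction as the paper: both observe that $\rmP(N_{a,b}>t)=\rmP(N^+>2at-b)$, so the dependence on $a$ disappears after the substitution $s=2at-b$, and it suffices to choose a shift $\Delta=b'-b$ that works for the standard Gaussian. The paper picks $t_0$ with $\rmP(N^+ +b>2t_0)=1/k$ and sets $b'=2t_0+c$, then manipulates ratios of tail probabilities; your route via the increasing hazard rate $h(u)=\phi(u)/\bar\Phi(u)$ makes the needed monotonicity of $s\mapsto \rmP(N>s)/\rmP(N>s-\Delta)$ explicit and yields a clean, essentially sharp choice $\Delta=\Phi^{-1}(1-1/(2k))$. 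The two arguments are the same idea expressed differently; yours is arguably more transparent, and your remark about needing the threshold $1/(2k)$ (so that $\int_0^\Delta h=\log k$ rather than $\log k-\log 2$) is exactly the right diagnosis.
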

\begin{proof}
For any $a,b>0$, $t\geq 0$, $\{N_{a,b}>t\}=\{N^+ -b>2at\}$, and thus we may assume without loss of generality that $a=1$ (as we are still free to choose $t\geq 0$).
Given $b$, $k$ as in the conditions of the lemma, let
\begin{equation}
	t_0:= \inf \{ t\geq 0:\, \rmP\big(N^+ +b>2t\big)\leq 1/k \} \,,
\end{equation}
and note that in particular $2t_0-b\geq 0$. Let $c>0$ and set
\begin{equation}
	b^\prime := 2 t_0+c.
\end{equation}
With this choice of parameters \eqref{eq:104.20} holds trivially for all $0\leq t\leq t_0$ as the right-hand side equals $\rmP(N^+>2t-b^\prime)=1.$
For $t>t_0$, we wish to prove
\begin{equation}\label{eq:104.24}
	\frac{\rmP \big(N^+>2t-b\big)}{\rmP \big(N^+>2t-b^\prime\big)}\leq 1/k.
\end{equation}
Now,
\begin{equation}
	\rmP \big(N^+>2t-b\big)= \rmP \big(N^+>2t_0-b\big)\frac{\rmP \big(N^+>2t-b\big)}{\rmP \big(N^+>2t_0-b\big)} 
\end{equation}
and similarly for $b^\prime$.
Thus, the left-hand side in \eqref{eq:104.24} is equal to
\begin{equation}\label{eq:104.26}
	\frac{\rmP\big(N^+>2t_0-b\big)}{\rmP \big(N^+>2t_0-b^\prime\big)} \frac{\rmP\big(N^+>2t-b\big)}{\rmP\big(N^+>2t-b^\prime\big)}\frac{\rmP\big(N^+>2t_0-b^\prime\big)}{\rmP\big(N^+>2t_0-b\big)}
\end{equation}
By the choice of $t_0$ and $b^\prime$,
\begin{equation}
	\frac{\rmP\big(N^+>2t_0-b\big)}{\rmP \big(N^+>2t_0-b^\prime\big)} \leq \frac{1}{k},
\end{equation}
\begin{equation}
	\frac{\rmP\big(N^+>2t-b\big)}{\rmP\big(N^+>2t-b^\prime\big)}=\rmP\big(N^+>2t-b\big| N^+>2t-b^\prime\big)\leq 1,
\end{equation}
and
\begin{equation}
	\frac{\rmP\big(N^+>2t_0-b^\prime\big)}{\rmP\big(N^+>2t_0-b\big)}=\rmP\big(N^+>2t_0-b^\prime\big| N^+>2t_0-b \big)=1.
\end{equation}
Plugging these back into~\eqref{eq:104.26} shows \eqref{eq:104.24} and concludes the proof.
	
\end{proof}

For the second result, let $W = (W_k)_{k \geq 0}$ be a non-negative process which is adapted to the filtration $(\cF_k)_{k \geq 0}$. We shall write $\rmP$ for the underlying probability measure and add the subscript $w \in \bbR$ to denote the case when $W_0 = w$. Suppose that for some $a,d, B > 0$ we have
\begin{equation}
\label{e:4.32}
	\big(W_{k+1} - W_k\big) \,\big|\, \cF_k \,\leq_s -d + N_a
	\quad \text{on} \quad \{W_k \geq B\} \,,
\end{equation}
and for $w > 0$, let
\begin{equation}
\label{e:103.23}
	T_w := \inf \{ k \geq 0 :\: W_k \leq w \} \,.
\end{equation}
Then,
\begin{lemma}
\label{l:4.3}
Suppose that $ad^2 > \log 2$. Then for all $w > B$, there exist $C,c \in (0,\infty)$ which may (and will) depend only on $w, a,d,B$ such that for all $z$ and all $k > 0$,
\begin{equation}
	\rmP_z \big(T_w > k) \leq C \rme^{-(ck-z)^+} .
\end{equation}
\end{lemma}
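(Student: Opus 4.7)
The plan is to use a Cram\'er-type exponential supermartingale argument: find $\lambda, \gamma > 0$ such that $M_k := \rme^{\lambda W_{k \wedge T_w} + \gamma (k \wedge T_w)}$ is a non-negative $(\cF_k)$-supermartingale, and then extract the tail estimate via Markov's inequality.

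The first step is to establish the Cram\'er contraction. Combining the stochastic domination \eqref{e:4.32} with the Gaussian tail $\rmP(N_a > t) \leq C\rme^{-at^2}$ and a standard integration by parts with completion of squares yields $\rmE[\rme^{\lambda N_a}] \leq C_0 \rme^{\lambda^2/(4a)}$ for all $\lambda \geq 0$; so on $\{W_k \geq B\}$,
\begin{equation*}
\rmE\big[\rme^{\lambda(W_{k+1} - W_k)} \,\big|\, \cF_k\big] \leq C_0 \,\rme^{-\lambda d + \lambda^2/(4a)}.
\end{equation*}
Choosing $\lambda := 2ad$ makes the exponent equal to $-ad^2$; the hypothesis $ad^2 > \log 2$ (absorbing $C_0$) then yields a bound of the form $\rme^{-\gamma}$ for some $\gamma > 0$ depending on $a$ and $d$. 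Since $w > B$ and $W_k > w$ on $\{T_w > k\}$ by the minimality of $T_w$, the Cram\'er inequality applies at every step prior to the stopping time, so $(M_k)$ is a non-negative supermartingale with $\rmE_z[M_k] \leq M_0 = \rme^{\lambda z}$.

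To conclude, note that on $\{T_w > k\}$ we have $M_k > \rme^{\lambda w + \gamma k}$, so Markov's inequality yields $\rmP_z(T_w > k) \leq \rme^{\lambda(z-w) - \gamma k}$. Combined with the trivial bound $\rmP_z(T_w > k) \leq 1$, a short case analysis (using $z \geq 0$ since $W$ is non-negative, and setting $c := \gamma/\max(\lambda, 1)$ together with $C := \rme^{-\lambda w} \vee 1$) yields the stated form $\rmP_z(T_w > k) \leq C\rme^{-(ck-z)^+}$, with constants depending only on $w, a, d, B$. The one delicate point is the Cram\'er step: one needs a sufficiently sharp mgf bound for $N_a$ (exploiting its one-sided Gaussian nature) in order for the contraction exponent to align with the quadratic hypothesis $ad^2 > \log 2$; cruder bounds such as $\rmE[\rme^{\lambda N_a}] \leq 2\rme^{\lambda^2/(8a^2)}$ would require a stronger hypothesis like $2a^2 d^2 > \log 2$. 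Everything else reduces to standard supermartingale manipulations.
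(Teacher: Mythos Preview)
Your approach is essentially the same as the paper's: both are Cram\'er/Chernoff arguments with the choice $\lambda = 2ad$, yielding the bound $\rmP_z(T_w > k) \leq \exp\big(-(ad^2-\log 2)k + 2ad(z-w)\big)$. The paper packages this by first coupling $W$ (up to time $T_B$) with an i.i.d.\ random walk $W'$ with step law $-d + N_a$, then applying Chernoff to $\rmP'_z(W'_k > w)$; your stopped exponential supermartingale does the same computation directly on $W$.

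One point deserves tightening. You write $\rmE[\rme^{\lambda N_a}] \leq C_0\,\rme^{\lambda^2/(4a)}$ and then say that ``absorbing $C_0$'' together with $ad^2 > \log 2$ gives a contraction. This works only if $\log C_0 \leq \log 2$, i.e.\ $C_0 \leq 2$. A generic $C_0$ obtained from a tail bound $\rmP(N_a > t) \leq C\rme^{-at^2}$ via integration by parts will typically depend on $a$ and can exceed $2$, in which case the stated hypothesis $ad^2 > \log 2$ would be insufficient. The paper uses precisely $\rmE\,\rme^{\lambda N_a} \leq 2\,\rme^{(4a)^{-1}\lambda^2}$, and the threshold $\log 2$ in the lemma is calibrated to that exact constant. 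Your final paragraph correctly flags this as the delicate point; to close it you should state and use the bound with the explicit constant $2$ rather than an unspecified $C_0$.
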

\begin{proof}
Let $W' = (W'_k)_{k=0}^\infty$ be a random walk with steps whose law is equal to that of $-d + N_a$ and with $W'_0 = z$. Denoting the probability measure on the space on which this process is defined by $\rmP'_z$. it follows from~\eqref{e:4.32}, that we may couple $W$ under $\rmP_z$ and $W'$ under $\rmP'_z$ such that $W'_l \geq W_l$ for all $l \leq T_B$. It follows that whenever $w > B$, we have
\begin{equation}
	\rmP_z (T_w > k) \leq \rmP'_z (T'_w > k) \,,
\end{equation}
where $T'$ is defined as in~\eqref{e:103.23} only with $W'_k$ in place of $W_k$.
At the same time
\begin{equation}
	\rmP'_z (T'_w > k) \leq \rmP'_z (W'_k > w) \leq 
	\rmP_z \Bigg(\sum_{l=1}^k \big(W'_l-W'_{l-1}+d\big) > w-z+kd\Bigg) .
\end{equation}
Multiplying both sides in the last probability by $\lambda = 2da$, exponentiating and using Markov's inequality, the last probability is at most
\begin{equation}
	\exp \Big(-k\Big(\lambda\big(k^{-1}(w-z)+d) - (4a)^{-1}\lambda^2 - \log 2 \Big)\Big)
	\leq \exp \Big(-k(ad^2 - \log 2) + 2ad(z - w)\Big) \,,
\end{equation}
where we used that for any $\lambda \in \bbR$, 
\begin{equation}
	\rmE \rme^{\lambda N_a} \leq 2\rme^{(4a)^{-1}\lambda^2}	 \,.
\end{equation}
This shows the desired statement.
\end{proof}

The next lemma shows that the process $|Y| = (|Y_k|)_{k \geq 1}$ satisfies the conditions imposed on $W$ above.
\begin{lemma}
\label{l:4.7a}
Let $Y$ be as above. There exist $D', B' \in (0,\infty)$ and a universal $b > 0$, such that for all $v \in \bbR$, under $\rmP(\cdot|Y_k = v)$, 
\begin{equation}
\label{e:4.51}
|Y_{k+1}| - |v| \leq_s d_v + N_{a,b} \,,
\end{equation}
where
\begin{equation}
\label{e:4.52}
d_v := -d1_{|v| > B'} + D'1_{|v| \leq B'} \,.
\end{equation}
\end{lemma}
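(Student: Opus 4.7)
My plan is to derive one-step tail estimates on $Y_{k+1}-v$ conditional on $Y_k=v$ using the differential inequality~\eqref{e:4.20}, and then translate these into stochastic domination for $|Y_{k+1}|-|v|$.

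First, I would show that the conditional density $p_{k,v}(v+\cdot)=\rme^{\varphi_{k,v}(v+\cdot)}$ is sandwiched, in the sense of monotone likelihood ratio, between two explicit asymmetric-Gaussian densities $q^\pm$ defined by $\log (q^+)'(u)=l_{a^{-1},\ol{d}_v}(u)$ and $\log (q^-)'(u)=l_{a,\ul{d}_v}(u)$. Integrating, $q^+$ has mode at $\ol{d}_v$ with Gaussian right tail of parameter $a$ and wider left tail of parameter $a^{-1}$, while $q^-$ is the mirror image with mode at $\ul{d}_v$ and Gaussian left tail of parameter $a$. Since $\varphi'_{k,v}(v+u)\leq l_{a^{-1},\ol{d}_v}(u)$, the ratio $p_{k,v}(v+u)/q^+(u)$ is nonincreasing in $u$, which yields $Y_{k+1}-v\leq_s Q^+$ under $\rmP(\cdot|Y_k=v)$ for $Q^+\sim q^+$; normalization is controlled by comparing the mass of $q^+$ near its mode to the integral condition, giving $\rmP(Y_{k+1}-v>t\mid Y_k=v)\leq C\rme^{-a(t-\ol{d}_v)^2}$ for $t\geq\ol{d}_v$, with $C$ depending only on $a$. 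The mirror argument with $q^-$ and the lower bound on $\varphi'$ gives $\rmP(Y_{k+1}-v<-t\mid Y_k=v)\leq C\rme^{-a(t+\ul{d}_v)^2}$ for $t\geq-\ul{d}_v$.

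Next, I would split into cases based on whether $|v|>B'$ or $|v|\leq B'$, where $B'>b$ is to be chosen; by the symmetry $v\mapsto -v$, which swaps $\ul{d}_v\leftrightarrow -\ol{d}_{-v}$, I may assume $v\geq 0$. In the regime $v>B'$, (B2) gives $\ol{d}_v\leq -d$, and the union bound
\begin{equation*}
\rmP\bigl(|Y_{k+1}|-v>t\,\big|\,Y_k=v\bigr)\leq \rmP\bigl(Y_{k+1}-v>t\bigr)+\rmP\bigl(Y_{k+1}-v<-2v-t\bigr)
\end{equation*}
applies. The first term is $\leq C\rme^{-a(t+d)^2}$ by Step~1. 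The second, a ``bounce-back'' term, uses (B3) through $\ul{d}_v\geq -(v+D)$ so that $2v+t+\ul{d}_v\geq B'-D+t$; it is $\leq C\rme^{-a(B'-D+t)^2}$ and becomes negligible once $B'\geq d+D+O(1)$. In the moderate regime $|v|\leq B'$, the modes $\ol{d}_v,\ul{d}_v$ lie in $[-(B'+D),B'+D]$ by (B3), and an identical union bound produces $\rmP(|Y_{k+1}|-v>t)\leq C'\rme^{-a((t-D')^+)^2}$ for $D'=2B'+D+O(1)$.

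Finally, I would fix the universal $b$ (independent of $k,v$) large relative to the constants from Steps~1--2, so that the tail bounds above are pointwise dominated by the tail of $d_v+N_{a,b}$. Using $\rmP(N_{a,b}>s)=1$ for $s\leq 0$ and $\rmP(N_{a,b}>s)\leq C\rme^{-a((s-b)^+)^2}$ otherwise, the shift $b$ provides the slack needed to absorb any prefactor $C'$ from the density estimates: for small $s$ the right-hand side is close to $1$, while for large $s$ the Gaussian exponent $a$ matches ours and the gap $b$ ensures $\rmP(d_v+N_{a,b}>t)\geq C'\rme^{-a((t-d_v)^+)^2}$ uniformly in $t$. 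The main obstacle is precisely this uniform tail-matching: in the transition region where $C'\rme^{-a(t-d_v)^2}$ is of order $1$ but below~$1$, we must verify the inequality does not fail. Choosing $b$ sufficiently large makes $\rmP(N_{a,b}>t-d_v)$ remain near $1$ throughout the transition, while the constant $C$ in the sandwich bounds from Step~1 depends only on $a$, so $C'$ depends only on $a,d,D,B'$; this closes the argument with $b$ depending on these parameters but uniform in $k$ and $v$.
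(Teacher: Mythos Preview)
Your proof is correct and follows essentially the same route as the paper's. The paper packages Step~1 as an almost-sure coupling: from the bounds on $\varphi'_{k,v}$ and FKG on $\bbR$ (Remark~\ref{r:2.2}) it couples $Y_{k+1}$ with asymmetric-Gaussian variables $Z_a,Z'_a$ so that $v+\ul d_v-Z'_a\le Y_{k+1}\le v+\ol d_v+Z_a$ holds pointwise, then bounds $|Y_{k+1}|=Y_{k+1}^+\vee Y_{k+1}^-$ directly; you instead carry the same information as tail inequalities on $Y_{k+1}-v$ and combine via the union bound $\{|Y_{k+1}|>v+t\}\subset\{Y_{k+1}-v>t\}\cup\{Y_{k+1}-v<-2v-t\}$. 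The case split on $|v|$ using B1--B3, and the absorption of prefactors into the shift $b$ (which the paper does via Lemma~\ref{l:103.5}), are identical in substance.
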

\begin{proof}
By~\eqref{e:4.20} and FKG (see Remark~\ref{r:2.2}), for each $v$, we may couple $Z_a$, $Z'_a$ and $Y_{k+1}$ such that $Y_{k+1}$ has the same law as that under $\rmP(\cdot|Y_k = v)$ and $Z_a$ and $Z'_a$ are two random variables with laws $\rmP(Z_a \in \rmd u) = \rmP(Z'_a \in \rmd u) = C \rme^{-a^{-1}{u^-}^2-a {u^+}^2} \rmd u$ and such that
\begin{equation}
\label{e:4.38a}
v + \ul{d}_v - Z'_a \leq Y_{k+1} \leq v + \ol{d}_v + Z_a 
\end{equation}
holds almost-surely. It is easy to verify that $\rmP(Z_a > t) \leq \rmP\big(|N| > \sqrt{2a}\, t\big)$ for all $t > 0$, with $N$ a standard Gaussian, so that by the Union Bound and Lemma~\ref{l:103.5},
\begin{equation}
\label{e:103.35}
Z_A^+ \leq_s (2a)^{-1/2} |N| \leq_s N_{a,b'}\,,
\end{equation}
for some universal $b' > 0$.

Now if $v > B' := b \vee (D+d)$, then by the assumptions \textrm{B1}-\textrm{B3} on $\ol{d}_v$, $\ul{d}_v$,
\begin{equation}
-D - Z'_a \leq Y_{k+1} \leq v - d + Z_a 
\end{equation}
so that,
\begin{equation}
Y_{k+1}^+ \leq v - d + Z^+_a 
\quad, \qquad
Y_{k+1}^- \leq D + Z'^+_a \leq v - d + Z'^+_a   \,,
\end{equation}
and
\begin{equation}
	|Y_{k+1}| = Y_{k+1}^- \vee Y_{k+1}^+ \leq v-d + Z_a^+ \vee Z_a'^+ \,.
\end{equation}
It follows from~\eqref{e:103.35}, the Union Bound and Lemma~\ref{l:103.5} again that
\begin{equation}
|Y_{k+1}| - |v| \leq_s -d + N_{a,b}
\end{equation}
for some universal $b > 0$. A similar argument shows the same for $v < -B'$.

If $|v| \leq B'$, it follows from~\eqref{e:4.38a} that
\begin{equation}
-2v - D - Z'_a \leq Y_{k+1} \leq 2v + D +  Z_a 
\end{equation}
so that proceeding as before we get
\begin{equation}
|Y_{k+1}| - |v| \leq_s |v| + D + (2a)^{-1/2}(N^+ + C) \leq_s B + D + N_{a,b} \,,
\end{equation}
for $b$, $d$, $B$ as above and $D' := B+D$.
\end{proof}

We can now give
\begin{proof}[Proof of Lemma~\ref{l:4.5}]
For all $k \geq 0$, let $W_k := |Y_k| \vee |Y'_k|$. We wish to show that $W_k$ satisfies the conditions for Lemma~\ref{l:4.3} to hold. Indeed, conditioned on $Y_k = v$ and $Y_{k+1} = v'$, by Lemma~\ref{l:4.7a} we have
\begin{equation}
	\begin{split}
	W_{k+1} = |Y_{k+1}| \vee |Y'_{k+1}|
		& \leq_s \big(v + d_v + N_{a,b}\big) \vee \big( v' + d_{v'} + N'_{a,b}\big) \\
		& \leq_s (v+d_v) \vee (v'+d_{v'}) + N_{a,b} \vee N_{a,b}'
\end{split}
\end{equation}
where $N_{a,b}'$ has the same law as $N_{a,b}$ and is independent of it.

If $v \vee v' > B'+D'+d$ then the first maximum on the right hand side above is at most 
$v \vee v' - d$. Otherwise, it is at most $v \vee v' + D'$. Using also the Union Bound and Lemma~\ref{l:103.5} to handle the second maximum, we this get
\begin{equation}\label{eq:3.44}
	W_{k+1} - W_k \leq_s - d 1_{\{v > B\}} + D' 1_{\{v \leq B\}} + N_{a,b'}
\end{equation}	
with $B := B'+D'+d$ and some universal $b' > 0$. The first statement of the lemma then follows immediately from Lemma~\ref{l:4.3}, once $ad^2$ is large enough so that 
$a(d-b')^2 > \log 2$.

For the second statement, let $\rmE'$ be the expectation functional associated with the given coupling of $Y$ and $Y'$. Then, conditioning on $Y_1, Y'_1$, the left hand side in~\eqref{e:4.43a} is equal to
\begin{equation}
	\rmE'_{v,v'} \big(\rmP_{Y_1, Y'_1} \big(\tau_w > k-1\big)\big)
	\leq C\rme^{-(c(k-1) - |v|\vee |v'|)^+} \rmE'_{v,v'} \rme^{(|Y_1|-|v|)^+ \vee (|Y'_1|-|v'|)^+} \,,
\end{equation}
where we used the first part of the lemma. 
Then using~\eqref{e:4.51} and~\eqref{e:4.52}, the Union Bound and Lemma~\ref{l:103.5} the last mean is at most
\begin{equation}
C \rmE \rme^{N_{a,b'}} \,,
\end{equation}
for some $C < \infty$ and universal $b$. The result follows as the Gaussian tails of $N_{a,b'}$ ensure that the last mean is finite.
\end{proof}

To prove Lemma~\ref{l:4.7} we shall need the following lower bounds on the density of  a random variable whose log density has derivative which is bounded from above and from below by $l_{a,w}$.
\begin{lemma}
\label{l:4.6}
Let $X$ have law $\rmP(X \in \rmd u) = \rme^{\varphi(u)} \rmd u$, where $\varphi: \bbR \to \bbR$ satisfies
\begin{equation}
\label{e:4.56}
l_{a, \ul{w}}(u) \leq 
	\varphi'(u) \leq l_{a^{-1}, \ol{w}}(u) \,,
\end{equation}
for $\ul{w} \leq \ol{w}$ and $a \in (0,1)$. 
Then,
\begin{equation}
	\frac{\rmP(X \in \rmd u)}{\rmd u} \geq \frac{1}{\sqrt{\pi a}} 	 \rme^{-a^{-1} \Delta_w^2} \,
	\rme^{-\frac{(u-u_w)^2}{a}} \,,
\end{equation}
where $u_w = (\ol{w} + \ul{w})/2$ and $\Delta_w := \ol{w} - \ul{w}$.
\end{lemma}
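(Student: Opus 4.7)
The plan is to sandwich $\varphi$ between the integrated upper and lower envelopes of $\varphi'$, and then exploit the normalization $\int\rme^\varphi\,\rmd u=1$. I would first locate the maximizer: since $l_{a,\ul w}>0$ on $(-\infty,\ul w)$ and $l_{a^{-1},\ol w}<0$ on $(\ol w,\infty)$, the hypothesis forces $\varphi'>0$ to the left of $\ul w$ and $\varphi'<0$ to the right of $\ol w$, so $M:=\sup_u\varphi(u)$ is attained at some $u^*\in[\ul w,\ol w]$.

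Next, integrating $\varphi'(t)\le l_{a^{-1},\ol w}(t)$ outward from $u^*$, the total positive contribution (accrued where the integrand is positive, i.e.\ on $[\ul w,\ol w]$) is bounded by $\Delta_w^2/a$, while outside $[\ul w,\ol w]$ the upper envelope equals $-2a(t-\ol w)$ or $-2a(\ul w-t)$ and produces a Gaussian correction. This gives
\begin{equation}
\varphi(u)\le M+\frac{\Delta_w^2}{a}-a\bigl((u-\ol w)^+\bigr)^2-a\bigl((\ul w-u)^+\bigr)^2.
\end{equation}
Substituting into $1=\int\rme^\varphi\,\rmd u$ and evaluating the resulting Gaussian tail integral yields
\begin{equation}
\rme^M\ge\frac{\rme^{-\Delta_w^2/a}}{\Delta_w+\sqrt{\pi/a}}.
\end{equation}

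Third, to lower-bound $\varphi(u)$ away from the maximizer, I would integrate $\varphi'(t)\ge l_{a,\ul w}(t)$ on $[u^*,u]$ when $u>u^*$ (and symmetrically use $\varphi'\le l_{a^{-1},\ol w}$ on $[u,u^*]$ when $u<u^*$). Piecewise integration, combined with $u^*,u_w\in[\ul w,\ol w]$, the triangle inequality $|u-\ul w|\vee|\ol w-u|\le|u-u_w|+\Delta_w/2$ and the elementary bound $(x+y)^2\le 2x^2+2y^2$, gives
\begin{equation}
\varphi(u)\ge M-\frac{2(u-u_w)^2}{a}-\frac{\Delta_w^2}{2a}.
\end{equation}
Combining the two displayed bounds then yields a lower bound of the form $c\,\rme^{-C\Delta_w^2/a}\rme^{-C(u-u_w)^2/a}$ for absolute constants $c,C>0$, matching the structure of the stated bound (up to absolute constants in the exponents and prefactor).

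The main obstacle is the case analysis in the last step: the natural lower bound on $\varphi(u)$ depends on which side of the unknown maximizer $u^*$ the point $u$ lies, and the resulting piecewise expression must be collapsed into a single clean quadratic in $u-u_w$. The triangle-inequality estimate above, which exploits only $u^*,u_w\in[\ul w,\ol w]$, handles this uniformly and is the source of the $\Delta_w^2/a$ correction in the exponent.
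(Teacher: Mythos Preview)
Your strategy is correct and is essentially the same as the paper's: bound $\varphi$ above and below by integrating the envelopes on $\varphi'$, then use the normalization $\int \rme^{\varphi}=1$. The one substantive difference is your choice of reference point. You integrate outward from the (unknown) maximizer $u^*\in[\ul w,\ol w]$, which forces the case analysis you flag and, after the triangle-inequality step, costs you a factor of $2$ in the Gaussian exponent and a worse $\Delta_w$-correction; as you note, you only recover the stated bound up to absolute constants. The paper instead integrates from the explicit midpoint $u_w$, writing $F(u):=\varphi(u)-\varphi(u_w)$ and $\rme^{\varphi(u)}=\rme^{F(u)}/\int \rme^{F}$. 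Because $u_w$ sits symmetrically between $\ul w$ and $\ol w$, both the lower bound $F(u)\ge -a^{-1}(u-u_w)^2-\tfrac12 a^{-1}\Delta_w^2$ and the upper bound on $\int \rme^{F}$ come out cleanly with no case split, and the constants are exactly those in the statement. For the application (Lemma~\ref{l:4.7}) your weaker constants would suffice, but if you want to prove the lemma as written, switching the base point from $u^*$ to $u_w$ is the fix.
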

\begin{proof}
Let $F(u) := \varphi(u) - \varphi(u_w) = \int_{u_w}^u \varphi'(u') \rmd u'$. Since
\begin{equation}
1 =  \int \rme^{\varphi(u)} \rmd u = \rme^{\varphi(u_w)} \int \rme^{F(u)} \rmd u \,,
\end{equation}
we must have 
\begin{equation}
	\rme^{\varphi(u)} = \rme^{\varphi(u_w)} \rme^{F(u)} =
	\frac{\rme^{F(u)}}{\int \rme^{F(u)} \rmd u} \,.
\end{equation}
We therefore need upper and lower bounds on $F(u)$. Suppose first that $u > u_w$. Then, by~\eqref{e:4.56},
\begin{equation}
\begin{split}
	F(u) & \geq \int_{u_w}^u l_{a,\ul{w}}(u') \rmd u'
	\geq \int_{u_w}^u (-2)a^{-1}(u'-\ul{w}) \rmd u'
	= -a^{-1}\Big((u-\ul{w})^2 - (u_w - \ul{w})^2 \Big) \\
	& \geq -a^{-1}(u-u_w)^2 - \frac12 a^{-1}\Delta_w^2\,.
\end{split}
\end{equation}
At the same time also,
\begin{equation}
\begin{split}
	F(u) & \leq \int_{u_w}^u l_{a^{-1},\ol{w}}(u') \rmd u'
	\leq \int_{u_w}^u \big(2a^{-1} \big(\ol{w} - u_w) - 2a^{-1}(u' - u_w)\big) \rmd u' \\
	& \leq -a^{-1} (u-u_w)^2 + a^{-1} \Delta_w(u-u_w) 
	\leq - a^{-1} (u-u_w)^2 + \frac12 a^{-1} \Delta_w^2 \,.
\end{split}
\end{equation}
A symmetric argument shows that the same bounds also hold when $u < u_w$.
The result follows since the upper bound on $F(u)$ implies that,
\begin{equation}
	\int \rme^{F(u)} \rmd u = \int \rme^{F(u+u_w)} \rmd u
	\leq \sqrt{\pi}\sqrt{a}\, \rme^{\frac12 a^{-1} \Delta_w^2} \,.
\end{equation}
\end{proof}

We are now ready for
\begin{proof}[Proof of Lemma~\ref{l:4.7}]
The left hand side of~\eqref{e:4.60} is equal to
\begin{equation}
	1 - \int \big(p_{k,v}(u) \wedge p_{k,v'}(u)\big)\rmd u  \,.
\end{equation}
It follows from~\eqref{e:4.20} and the assumptions on $\ul{d}_v$ and $\ol{d}_v$ that for all $w > 0$ there exist $C > 0$ such that 
$|\ul{d}_v| \vee |\ol{d}_v| \leq C$ whenever $|v| < w$. Then by Lemma~\ref{l:4.6}, there exist further $c, C \in (0,\infty)$ such that 
\begin{equation}
	p_{k,v}(u) \wedge p_{k,{v'}}(u) \geq c \rme^{-Cu^2} \,.
\end{equation}
\end{proof}

\section{Sharp control of the right tail of the minimum}
\label{s:5}
In this section we prove Theorem~\ref{t:1.1}. This will give us sharp estimates on the right tail of the minimum and its derivative. These estimates, particularly the one involving the derivative, will be crucially used in the sequel to derive the required structural results for the BRW when conditioned on $\Omega_n^+$. The proof relies on to key ingredients, which are given in the next two subsections.

\subsection{Existence of a free energy}
As a first ingredient in the proof of Theorem~\ref{t:1.1}, we shall need the following result, which shows the existence of a ``free energy'' for a specific family of Hamiltonians. This will be used to show that the error term in the theorem is $o(u)$.
\begin{proposition}
\label{l:3.2}
Let $g$, $(g_n)_{n \geq 1}$ be functions from $\bbR$ to $\bbR$ such that
\begin{description}
    \item[C1] $g_n(u) \to g(u)$ as $n \to \infty$ uniformly on compact subsets of $\bbR$.
    \item[C2] $g$ is continuous.
    \item[C3] $\lim_{u \to \pm \infty} \sup_n g_n(u) = -\infty$.
\end{description}
Then there exists $G^* \in (-\infty, \infty)$ which depends only on $g$, such that
\begin{equation}
\label{e:4.19}
\lim_{n \to \infty} \frac{1}{2^n} \log \Big[\rmE \exp \Big(\sum_{x \in \bbL_n} g_n \big(h(x)\big) \Big)\Big] = G^* \,.
\end{equation}

Moreover, let $\{(g^{(\iota)}, (g^{(\iota)}_n)_{n \geq 1}) :\: \iota \in \cI\}$ be a family of pairs of a function $g$ and a sequence $(g_n)_{n \geq 1}$ as above. Suppose that the convergence in \textrm{(C1)} is uniform also with respect to $\iota \in \cI$, that the family of functions $\{g^{(\iota)} :\: \iota \in \cI\}$ is equicontinuous on compacts and that the limit in \textrm{(C3)} is uniform with respect to $\iota \in \cI$. Then~\eqref{e:4.19} holds uniformly in $\iota \in \cI$ with $G^* = (G^{(\iota)})^*$ and $|(G^{(\iota)})^*|$ are uniformly bounded for all $\iota$.
\end{proposition}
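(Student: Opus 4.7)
Set $L_n:=2^{-n}\log\rmE\exp(\sum_{x\in\bbL_n}g_n(h(x)))$; the goal is $L_n\to G^*(g)$. My approach is to reduce to a fixed limit function $g_n\equiv g$, then analyze a nonlinear tree recursion for the resulting iterates.

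For the \textbf{reduction}, \textrm{(C1)}--\textrm{(C3)} yield $e^{g_n}\to e^g$ uniformly on $\bbR$: \textrm{(C1)}+\textrm{(C2)} give it on compacts, \textrm{(C3)} off compacts (where both quantities are uniformly small). A perturbative estimate --- e.g., controlling the ratio $\prod_xe^{g_n(h(x))}/\prod_xe^{g(h(x))}$ under the tilted law with density $\propto\prod_xe^{g(h(x))}$, which concentrates on configurations whose leaves lie mostly in bounded regions --- gives $|L_n[g_n]-L_n[g]|\to 0$. Uniformity over the family $\{(g^{(\iota)},g_n^{(\iota)}):\iota\in\cI\}$ is automatic since all estimates are quantitative in the uniform inputs, and equicontinuity of $\{g^{(\iota)}\}$ controls the rates uniformly in $\iota$.

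For the \textbf{recursion}, tree self-similarity (conditioning on the two children of the root and using independence of the subtrees) gives $F_k(v):=\rmE\exp(\sum_{y\in\bbL_k}g(v+h(y)))=(\rmE F_{k-1}(v+N))^2$ with $N$ standard Gaussian and $Z_n=F_n(0)$. Writing $U_k(v):=2^{-k}\log F_k(v)$ this becomes $U_k=\mathcal{A}_{k-1}(U_{k-1})$, $U_0=g$, with $\mathcal{A}_j(f)(v):=2^{-j}\log\rmE\exp(2^jf(v+N))$. The operator $\mathcal{A}_j$ is monotone, sup-norm non-expansive, preserves constants, sandwiches $\rmE[f(\cdot+N)]\leq\mathcal{A}_j(f)\leq\sup f$, and approximates the supremum operator as $j\to\infty$ on compact $v$-sets for functions decaying at infinity.

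For \textbf{convergence}, the upper bound $\sup U_k\leq\sup g<\infty$ follows from \textrm{(C3)}, and a uniform lower bound $\inf_n U_n(0)>-\infty$ comes from a Jensen-type argument combined with the localization provided by \textrm{(C3)}. The crucial input is the Cauchy estimate
\[
U_{n+1}(0)-U_n(0)=\frac{1}{2^n}\log\rmE\exp\bigl(2^n(U_n(N)-U_n(0))\bigr),
\]
for which one needs summable control of the oscillation $U_n(v)-U_n(0)$ on the effective support of $N$. The toy case $g(v)=-v^2/2$ --- where one computes $U_n(v)=-v^2/2^{n+1}+c_n$ with $c_n\to-\log 2$ --- exemplifies the general picture: the shape of $U_n$ flattens on compacts at a rate compensating the growing exponent $2^n$ in $\mathcal{A}_n$, so the right-hand side above is summable. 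The \textbf{main obstacle} is making this quantitative: since $\mathcal{A}_j$ is only non-expansive (not strictly contractive) in sup-norm, summability hinges on two coupled effects --- the growing exponent $2^j$ inside $\mathcal{A}_j$ (which sharpens the recursion toward a supremum operation, forcing the iterates to flatten on compacts) and the tail decay propagated from \textrm{(C3)} (which keeps the iterates' maxima localized on a fixed compact throughout the iteration). Uniform boundedness of $|(G^{(\iota)})^*|$ is inherited from the uniform a-priori bounds on $L_n^{(\iota)}$.
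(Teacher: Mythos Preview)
Your proposal has two genuine gaps, and the paper takes a quite different route that circumvents both.

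\medskip
\textbf{Gap 1: the reduction.} You claim $|L_n[g_n]-L_n[g]|\to 0$ by controlling the ratio $\prod_x e^{g_n(h(x))-g(h(x))}$ under the tilted law. While $e^{g_n}\to e^g$ uniformly on $\bbR$ does follow from (C1)--(C3), this does \emph{not} give pointwise control on $g_n-g$ where $g$ is very negative. The product has $2^n$ factors, and on leaves with $|h(x)|$ large the individual ratio $e^{g_n-g}$ is uncontrolled. Your remedy (``the tilted law concentrates on configurations whose leaves lie mostly in bounded regions'') is itself a large-deviation statement about the conditioned BRW that would need proof; it is essentially as hard as the proposition itself.

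\medskip
\textbf{Gap 2: the Cauchy estimate.} You correctly identify the crux --- summability of
\[
U_{n+1}(0)-U_n(0)=2^{-n}\log\rmE\exp\bigl(2^n(U_n(N)-U_n(0))\bigr)
\]
requires $U_n$ to flatten at rate $2^{-n}$ on (effectively) all of $\bbR$ --- and you say outright that this is the ``main obstacle''. But you do not resolve it. Non-expansiveness of $\mathcal A_j$ gives nothing here, and the Laplace heuristic (``$\mathcal A_j$ approximates the supremum'') only tells you the limit should be a constant, not the rate of approach. The quadratic toy case works because the recursion is exactly solvable; for general $g$ you have provided no mechanism.

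\medskip
\textbf{What the paper does instead.} The paper avoids both issues with a two-scale argument. It introduces
\[
G_{k,n}(u):=2^{-k}\log\rmE\exp\Bigl(\sum_{x\in\bbL_k}g_n(h(x)+u)\Bigr),\qquad G_k(u)\ \text{defined with }g\text{ in place of }g_n,
\]
and sets $G_k^*:=\max_u G_k(u)$. Conditioning on $h$ at depth $n-k$ gives the upper bound $G_{n,n}(0)\le\sup_{u'}G_{k,n}(u')$, and for each \emph{fixed} $k$ one has $\sup_{u'}G_{k,n}(u')\to G_k^*$ as $n\to\infty$ by a straightforward truncation (this is where (C1)--(C3) enter, and only at the level of a single generation-$k$ functional). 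For the lower bound, one restricts to the event that every value at depth $n-k$ lies within $\delta$ of the optimizer $u_k^*$; the probability of this event is at least $e^{-C_\delta 2^{n-k}}$, so its cost at the $2^{-n}$ scale is $O(2^{-k})$. This sandwiches $G_{n,n}(0)$ between $G_k^*-O(2^{-k})-o_n(1)$ and $G_k^*+o_n(1)$, making both $(G_{n,n}(0))_n$ and $(G_k^*)_k$ Cauchy with the same limit. No flattening rate is needed, and the passage $g_n\to g$ is never performed globally --- only inside $G_{k,n}\to G_k$ for fixed $k$, which is soft.
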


\begin{proof}
For $n,k \geq 1$ and $u \in \bbR$, let 
\begin{equation}
\label{e:3.4}
G_{k,n} (u) := \frac{1}{2^k} \log \Big[\rmE \exp \Big(\sum_{x \in \bbL_k} g_n\big(h(x) + u\big) \Big)\Big] \ , 
\quad
G_{k} (u) := \frac{1}{2^k} \log \Big[\rmE \exp \Big(\sum_{x \in \bbL_k} g\big(h(x) + u\big) \Big)\Big] \,.
\end{equation}

The assumptions guarantee that $\lim_{u \to \pm \infty} g(u) = -\infty$ so that 
$\sup_{u\in \bbR} g(u) < \infty$ and thus there exists $M < \infty$ which bounds from above $g$ and $g_n$ for all $n$ large enough and hence also $G_k$ and $G_{k,n}$ for all $n$ large enough. From the bounded convergence theorem, we get the continuity of $G_k(u)$ and that $G_k(u)$ tends to $-\infty$ when $ u\to \pm \infty$. It follows that $G_k$ is bounded from above and attains its maximum. We therefore set $G_k^* := \max_{u \in \bbR} G_k(u)$ and let $u_k^*$ be such that $G_k^* = G_k(u_k^*)$. 

As an a priori uniform lower bound on $G_k^*$, we observe that for $u=0$, there exists $m > -\infty$ such that
\begin{equation}
G_k^* \geq G_k(0) \geq \min_{u\in[0,1]} g(u) + \frac{1}{2^k} \log \rmP \big(h(x) \in [0,1],\, \forall x \in \bbL_k \big) \geq m \,.
\end{equation}
To lower bound the probability above, we replace the event therein by that in which $h(x) \in [0,1]$ for all $x \in \bbT_k$. It is not difficult to see that this probability decays exponentially in the size of the tree with a rate that does not depend on $k$, e.g. by requiring all $2(2^k-1)$ increments to be such that for all generations $m=1,\dotsc, k-1$, starting from the first generation, $h(x) \in [-1,2]$ for all $x\in \bbL_m$.

We first claim that for all $k$,
\begin{equation}
\label{e:3.5}
	\lim_{n \to \infty} \sup_u G_{k,n}(u) = G_k^* \,.
\end{equation}
Indeed, define the event 
\begin{equation}
	A_{k,r}(u) := \Big\{ h(x) + u \in [-r, r] \,,\,\, \forall x \in \bbL_k \Big\}.
\end{equation}
Then, $\big|\rme^{2^k G_{k,n}(u)} - \rme^{2^k G_{k}(u)}\big|$ is at most 
\begin{multline}
	\rmE \bigg( \Big| \exp \Big(\sum_{x \in \bbL_k} g_n\big(h(x) + u\big)\Big) - \exp \Big(\sum_{x \in \bbL_k} g\big(h(x) + u\big) \Big) \Big| \,;\,\, A_{k,r}(u) \bigg) \\
	+
	\rmE \bigg( \exp \Big(\sum_{x \in \bbL_k} g_n\big(h(x) + u\big) \Big) \,;\, A^\rmc_{k,r}(u) \bigg)
	+ \rmE \bigg( \exp \Big(\sum_{x \in \bbL_k} g\big(h(x) + u\big) \Big) \,;\, A^\rmc_{k,r}(u) \bigg) .
\end{multline}		
The second expectation	 is bounded from above by
\begin{equation}
\exp \Big(2^k  \textstyle\sup_{|s| > r} g_n(s) \Big) \,,
\end{equation}
and thus tends to $0$ as $r \to \infty$ uniformly in $n$ and $u$. 
The same arguments, with $g_n$ replaced by $g$, shows that the last term goes to $0$ as $r \to \infty$.
At the same time, the uniform convergence of $g_n$ to $g$ on $[-r, r]$ and the boundedness of $g$ and $g_n$ again, ensure that the first term tends to $0$ as $n \to \infty$ uniformly in $u$ for all $r$. All together the above shows that
\begin{equation}
\label{e:3.9}
\lim_{n \to \infty} \sup_{u\in \bbR} \big| \rme^{2^k G_{k,n}(u)} - \rme^{2^k G_{k}(u)}\big| = 0 \,.
\end{equation}
In particular, 
\begin{equation}
\lim_{n \to \infty} \sup_{u\in \bbR} \rme^{2^k G_{k,n}(u)} = \rme^{2^k G_k^*} \,,
\end{equation}
which gives~\eqref{e:3.5}.

Next, we claim that for all $\delta > 0$,
\begin{equation}
\label{e:3.10}
\liminf_{k \to \infty} \liminf_{n \to \infty}
\min_{|u'-u_k^*| < \delta} G_{k,n}(u') > G_k^*-5\delta \,.
\end{equation}
\
To see this, we condition on the first increments, $Z$, which are standard Gaussian to write $\rme^{2^{k-1} G_k(u+\delta)}$ as
\begin{multline}
\rmE \rme^{2^{k-1} G_{k-1}(u-(Z-\delta))} 
=
\rme^{-\delta^2/2} \rmE \rme^{2^{k-1} G_{k-1}(u-Z) - \delta Z} 
\geq 
\rme^{-\delta^2/2 - (2^k) \delta} \rmE \big[\rme^{2^{k-1} G_{k-1}(u-Z)} ; |Z| < 2^k \big] \\
\geq 
\rme^{-\delta^2/2 - (2^k) \delta} \rmE \big[\rme^{2^{k-1} G_{k-1}(u-Z)} \big] - C \rme^{2^{k-1}M - 2^{2k-1}}
\geq 
\rme^{-2^{k+1} \delta} \rme^{2^{k-1} G_k(u)} - \rme^{-3^k} \, .
\end{multline}
This implies 
\begin{equation}
G_k(u+\delta) \geq G_k(u) - 4 \delta 
- \rme^{-3^k + 2^{k} (G^-_k(u))} \,,
\end{equation}
with $G^-_k(u)=\max(-G_k(u),0)$.
Repeating the derivation with $-\delta$ in place of $\delta$, gives
\begin{equation}
\min_{|u' - u| < \delta} G_k(u') \geq G_k(u) - 4 \delta 
- \rme^{-3^k + 2^{k} (G^-_k(u))} \,.
\end{equation}
Taking $u = u^*_k$, using the uniform lower bound on $G_k(u)$ and 
recalling~\eqref{e:3.9} shows~\eqref{e:3.10}.

Next, we shall show that for any $u$,
\begin{equation}
\label{e:3.6}
\lim_{k \to \infty}\lim_{n \to \infty}
\big|G_{n,n}(u) - G^*_k \big| = 0 \,.
\end{equation}
This then implies that both sequences $(G_{n,n}(u))_{n \geq 1}$ and $(G_k^*)_{k\geq 1}$ are Cauchy and therefore tend to the same finite limit which we denote by $G^*$. Plugging in $u=0$ we obtain the desired result.

We prove~\eqref{e:3.6} by upper and lower bounding the difference inside the absolute value separately. For an upper bound, by conditioning on the value of $h$ on $\bbL_{n-k}$, we get
\begin{equation}
\begin{split}
G_{n,n}(u) & = \frac{1}{2^n} \log \rmE \bigg[\exp \Big(\sum_{x \in \bbL_{n-k}} 2^k G_{k,n}\big(h(x)+u\big) \Big)\bigg]
\leq 
\frac{1}{2^n} \log \rmE \bigg[\exp \Big(2^{n-k} 2^k \sup_{u'\in \bbR} G_{k,n}(u') \Big)\bigg] \\
& \leq \sup_{u' \in \bbR} G_{k,n}(u') .
\end{split}
\end{equation}
For any $k$, the last supremum tends to $G_k^*$ as $n \to \infty$, in light of~\eqref{e:3.5}.

For a matching lower bound, let $\delta > 0$ and for any $n \geq k \geq 1$, define the event
\begin{equation}
	B_{k,n,\delta}(u) := \Big\{ \big|h(x) + u - u_k^*\big| < \delta \,,\,\, \forall x \in \bbL_{n-k} \Big\}.
\end{equation}
Conditioning again on $\bbL_{n-k}$, we then use~\eqref{e:3.10} to 
 lower bound $G_{n,n}(u)$ by
\begin{equation}
\label{e:3.19}
\frac{1}{2^n} \log \rmE \bigg[\exp \Big(\sum_{x \in \bbL_{n-k}} 2^k G_{k,n}\big(h(x)+u\big) \Big) \,;\,\, B_{k,n,\delta}(u) \bigg] 
\geq G_k^*-5 \delta + \frac{1}{2^n} \log \rmP \big(B_{k,n,\delta}(u)\big) \,,
\end{equation}
whenever $k$ and then $n$ are chosen large enough.

To lower bound the probability of $B_{k,n,\delta}(u)$, we assume without loss of generality that $u-u_k^*$ is non-negative and condition first on the event
\begin{equation}
\Big\{ h(x) \in j \wedge (u-u^*_k) + [-1,1] \,,\,\, \forall x \in \bbL_j \,,\, j = 1, \dots, n-k-1 \Big\} \,.
\end{equation}
It is not difficult to see that whenever $n-k > u-u_k^*-1$, the probability of the last event is at least $\rme^{-C|\bbT_{n-k-1}|} \geq \rme^{-C 2^{n-k}} $ for some constant $C>0$, as then each step size cannot exceed $3$ and there are $|\bbT_{n-k-1}|$ many steps. At the same time, on this event, the probability of $B_{k,n,\delta}(u)$ is at least $\rme^{-C_\delta |\bbL_{n-k}|} = 
	\rme^{-C'_\delta 2^{n-k}}$, with $C'_\delta = - \log \rmP\big(\cN(0,1) \in [1,1+2\delta]\big)$. Altogether we get for all $n$ large enough,
\begin{equation}
\frac{1}{2^n} \log \rmP \big(B_{k,n,\delta}(u)\big) 
\geq -2^{-k} (C+C'_\delta) \,.
\end{equation}
For any $\delta > 0$, the right hand side is at least $-\delta$ once $k$ is chosen large enough. 

Altogether this shows that for all $\delta > 0$, the right hand side in~\eqref{e:3.19} is at least $G_k^* - 6\delta$ provided first $k$ and then $n$ are chosen sufficiently large. In particular, this shows the desired lower bound in~\eqref{e:3.6}.

The second part of the Theorem, which is that the statement \eqref{e:4.19} in case of a family of functions $\{(g^{(\iota)}, (g^{(\iota)}_n)_{n \geq 1}) :\: \iota \in \cI\}$ holds uniformly in $\cI$, follows from the fact that by assumption all bounds above can be performed uniformly in $\iota \in \cI$.
Moreover, by Assumption {\rm (C3)} it follows first that $(G^{\iota})^*$ are bounded uniformly from above as {\rm (C3)} holds also uniformly in $\iota \in \cI$. At the same time the uniformity of the Assumption {\rm (C3)} implies that $u^*_k(\iota)$ are bounded from below and above uniformly in both $k$ and $\iota$, and thus by the equicontinuouity of $ \{(g^{(\iota)}:\: \iota \in \cI\}$, $(G^{\iota})^*$ are also bounded from below uniformly in $\iota \in \cI$.
\end{proof}

\subsection{Control of the conditional mean}
The second ingredient in the proof of Theorem~\ref{t:1.1} is the following control over the mean value of $h$ conditional on the event $\Omega_n(u)$. The more difficult proof of the upper bound uses the representation of $\rmP_n(-|\Omega_n(u))$ as a random walk under a pinning potential. Here we appeal to the the localization statements from Subsection~\ref{ss:3.1}, which are in force, thanks to the a priori estimates on the right tail of the minimum, as stated in Subsection~\ref{s:2.5.2}.
\begin{proposition}
\label{p:3.1}
For all $\epsilon > 0$ there exists $C > 0$ such that such that for all $n \geq 1$, $1 \leq u \leq 2^{\sqrt{n}}$, $x \in \bbL_{l_u}$, with $l_u = \lfloor \log_2 u \rfloor$,
\begin{equation}
	\Big| \rmE_n \big(h(x) \,\big|\, \Omega_n(u) \big) - (u - c_0 \log_2 u)\Big| \leq C \,.
\end{equation}
\end{proposition}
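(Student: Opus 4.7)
Write $u' := u - c_0 l_u$, so $u - c_0 \log_2 u - u' = O(1)$ uniformly in $u \ge 1$; the target becomes $\rmE_n(h(x)\,|\,\Omega_n(u)) = u' + O(1)$ for $x \in \bbL_{l_u}$. I will prove matching lower and upper bounds separately, each relying on one of the two representations from Subsection~\ref{s:2.6.2}, together with FKG (Lemma~\ref{lemma:FKG}), the a priori tail bounds on $p_k$ (Lemma~\ref{l:4.2a}), the Cameron-Martin Formula (Lemma~\ref{l:1}), and the localization estimate for random walks under a pinning potential (Proposition~\ref{p:2.14}). Throughout write $u_k := u - m_n + m_{n-k}$; since $m_n - m_{n-k} = c_0 k + O(k/n)$ and $l_u \le \sqrt{n}$, we have $u_k = u - c_0 k + O(1)$ uniformly in $0 \le k \le l_u$.

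\textbf{Lower bound.} Use the Doob-type representation \eqref{e:2.32d} at level $k = l_u$,
\begin{equation*}
\rmE_n\bigl(h(x)\,\bigl|\,\Omega_n(u)\bigr) = \rmE^{\varphi}_{l_u}\bigl(h(x)\bigr), \qquad \varphi(h) = \prod_{y \in \bbL_{l_u}} p_{n-l_u}\bigl(u_{l_u} - h(y)\bigr).
\end{equation*}
The derivative upper bound in Lemma~\ref{l:4.2a} implies that, for $C$ large enough and independent of $k$, the function $v \mapsto \log p_k(v) + \tfrac{1}{8}(v-C)^2$ is non-increasing. Setting $\psi(h) := \prod_{y \in \bbL_{l_u}} \exp\bigl(-\tfrac{1}{8}(u_{l_u} - h(y) - C)^2\bigr)$, this says $\varphi/\psi$ is a non-decreasing function of $h$. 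Now $\rmP^{\psi}_{l_u}$ is itself a Gaussian measure---exactly the DGFF on $\bbT_{l_u}$ augmented by a single virtual vertex pinned at $u_{l_u} - C$ and joined to each leaf by an edge of conductance $\tfrac{1}{4}$---so it has non-negative covariances and is FKG. Hence $\rmE^{\varphi}_{l_u}(h(x)) \ge \rmE^{\psi}_{l_u}(h(x))$, and by \eqref{e:2.7}--\eqref{e:2.3a} the latter equals the discrete-harmonic extension on the augmented graph. The ansatz $\mu(y) = a(1-2^{-|y|})$ is harmonic on the interior and the Robin-type condition at a leaf reduces to $a \cdot 2^{-l_u} = \tfrac{1}{4}\bigl(u_{l_u} - C - a(1-2^{-l_u})\bigr)$, giving $a = (u_{l_u} - C)/(1 + 3\cdot 2^{-l_u})$ and hence $\rmE^{\psi}_{l_u}(h(x)) = u_{l_u} - C + O(1) = u' + O(1)$.

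\textbf{Upper bound.} Use the random-walk-on-a-branch representation \eqref{e:2.43a}: for $x \in \bbL_{l_u}$,
\begin{equation*}
\rmE_n\bigl(h(x)\,\bigl|\,\Omega_n(u)\bigr) = \frac{\ol{\rmE}_{l_u}\bigl(h(l_u)\,\varphi(h)\bigr)}{\ol{\rmE}_{l_u}\,\varphi(h)}, \qquad \varphi = \varphi_{n,l_u,u} \text{ as in \eqref{e:2.38a}}.
\end{equation*}
Using Lemma~\ref{l:4.2a} together with the FKG property of one-dimensional measures (Remark~\ref{r:2.2}), construct a dominating product potential $\psi(h) = \prod_{k=1}^{l_u} \tilde{p}^{1+\1_{l_u}(k)}_{n-k}\bigl(u_k - h(k)\bigr)$, with $\tilde p_k$ as in \eqref{e:104.4}, such that $\psi/\varphi$ is non-decreasing in $h$ for $C$ large; this substitution only enlarges the ratio. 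Now apply the Cameron-Martin Formula (Lemma~\ref{l:1}) on the linear graph $[0,l_u]$ with the tree-harmonic tilt $\mu(k) := (1-2^{-k}) u'$, simultaneously to numerator and denominator. Careful bookkeeping shows that the quadratic pieces of $\log\psi$ combine with $\tfrac{1}{2}\langle \mu, \Delta \mu\rangle + \langle h, \Delta \mu\rangle$ to leave a residual density $\chi(h) = \exp\bigl(-\sum_{k=1}^{l_u} g_{n,k,u}(h(k))\bigr)$ relative to $\ol{\rmP}_{l_u}$, with $g_{n,k,u}$ satisfying the localization assumptions \textbf{(A1)}--\textbf{(A2)} of Subsection~\ref{ss:3.1} uniformly in $n,k,u$. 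The ratio thereby becomes $\mu(l_u) + \ol{\rmE}^{\chi}_{l_u}(h(l_u)) = u' + O(1) + \ol{\rmE}^{\chi}_{l_u}(h(l_u))$, and Proposition~\ref{p:2.14} yields uniformly exponential tails for $h(l_u)$ under $\ol{\rmP}^{\chi}_{l_u}$, so the last mean is $O(1)$.

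\textbf{Main obstacle.} The technical heart is the Cameron-Martin computation in the upper bound: one must calibrate $\mu$, the constant $C$, and the coefficients $(4-2^{-k+2})^{-1}$ so that, after the shift, the quadratic parts of $\log\psi$ cancel $\tfrac{1}{2}\langle\mu,\Delta\mu\rangle + \langle h,\Delta\mu\rangle$ up to a residual $\sum g_{n,k,u}(h(k))$ each of whose summands is bounded on $[-b,b]$, grows at least linearly outside it, and is monotone there, with parameters independent of $n,k,u$. The uniformity is sensitive: the effective variances $1-2^{-(n-k)}$ in the a priori bounds drift with $k$, the correction $m_n - m_{n-k} - c_0 k = O(k/n)$ has to be absorbed into the bounded piece of each $g_{n,k,u}$ throughout the product, and the $\log(v \vee \rme)$ slack in Lemma~\ref{l:4.2a} must be dominated uniformly across $k \in [1,l_u]$. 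In contrast, the lower bound reduces to a single closed-form harmonic computation on the augmented tree.
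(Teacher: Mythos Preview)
Your overall strategy matches the paper's: lower bound via the Doob representation~\eqref{e:2.32d} and comparison with an explicit DGFF on an augmented tree (this part is correct and essentially identical to the paper's argument), upper bound via the branch representation~\eqref{e:2.43a}, FKG replacement of $\hat p_{n-k}$ by the explicit $\tilde p_{n-k}$, Cameron--Martin tilt, and Proposition~\ref{p:2.14}. The structure is right, but the tilt you specify in the upper bound does not produce the cancellation you claim.

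The problem is the choice $\mu(k) = (1-2^{-k})u'$. After the shift, the argument of $\tilde p_{n-k}$ is $u_k - \mu(k) - h(k)$, and with your $\mu$ one has
\[
u_k - \mu(k) \;=\; 2^{-k}u' + c_0(l_u - k) + O(1).
\]
Expanding the quadratic part of $-\log\tilde p_{n-k}$ then yields a term linear in $h(k)$ with coefficient $\approx -\tfrac12\bigl(2^{-k}u' + c_0(l_u-k)\bigr)$, while $(\Delta\mu)(k) = 2^{-k-1+\1_{l_u}(k)}u'$ exactly. The net linear coefficient in $g_k$ is therefore $-\tfrac{c_0}{2}(l_u-k) + O(1)$, so $g_k$ is minimized near $c_0(l_u-k)$, which ranges up to $c_0 l_u = \Theta(\log u)$. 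Assumptions \textbf{(A1)}--\textbf{(A2)} then fail with \emph{uniform} $b,D$, and Proposition~\ref{p:2.14} only delivers constants that depend on $\log u$---not the $O(1)$ you need.

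The paper instead takes the $k$-dependent tilt $\mu(k) = (1-2^{-k})u_k$. Then $u_k - \mu(k) = 2^{-k}u_k$ exactly, and a short computation gives $(\Delta\mu)(k) = 2^{-k-1+\1_l(k)}u_k + O(1)$, so the linear-in-$h(k)$ pieces cancel up to a uniformly bounded remainder; the resulting $g_k$ are genuinely centered near $0$ (see~\eqref{e:4.16a}). Since $\mu(l_u) = (1-2^{-l_u})u_{l_u} = u' + O(1)$ anyway, the endpoint is unchanged. A second small point: the paper first runs the argument up to $l = l_u - C_1$ (to keep $2^{-k}u_k$ bounded away from the constants in $g_k$) and then extends to $l_u$ via Proposition~\ref{l:2.8a}. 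The fix to your proof is thus minor---replace $u'$ by $u_k$ in the tilt---but it is exactly the ``careful bookkeeping'' you defer, and it does not go through with the tilt as written.
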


\begin{proof}
In view of Proposition~\ref{l:2.8a} we can assume that $u > u_0$ for some arbitrary but fixed $u_0$.
Starting with the lower bound on the mean, using representation~\eqref{e:2.32d}, the conditional mean in the statement of the proposition can be written as $\rmE_l^{\varphi} h(x)$, where $\varphi$ is equal to $\varphi_{n,l,u}$ as defined in~\eqref{e:2.40d}. We wish to claim that the conditional mean only decreases if we consider $\rmE_l^{\psi} h(x)$ instead, with the function $\psi$ defined as
\begin{equation}
\psi(h) \equiv \psi_{n,l,u}(h) := \prod_{x \in \bbL_l} \tilde{p} \big(-h(x) + u'\big) \ , \quad \tilde{p}(s) := \rme^{-\frac18 (s-C_0)^2} \,,
\end{equation}
for some sufficiently large $C_0 \in (0,\infty)$, $s\in \bbR$ and with
\begin{equation}
u' = u - (m_n - m_{n-l}) = u - c_0 l + O(l/n) \,.	
\end{equation}

Indeed, since $\rmP_l^+$ is FKG by Lemma~\ref{lemma:FKG}, it is sufficient to show that $\psi(h)/\varphi(h)$ is decreasing in $h$. Checking one coordinate at a time, taking the logarithm and differentiating, this would be the case if for all $l \geq 1$ and all $s \in \bbR$,
\begin{equation}
	\frac{\rmd}{\rmd s} \log  p_l(s) \leq \frac{\rmd}{\rmd s} \log  \tilde{p}(s) =
	-\frac14 (s - C_0) \,.
\end{equation}
But this follows for $C_0$ large enough from the upper bound in Lemma~\ref{l:4.2a}.

Now, let $\wt{\bbT}_{l+1}$ be the tree $\bbT_{l+1}$ with one child of each vertex at depth $l$ removed and denote by $\wt{\bbL}_{l+1}$ its leaves. Consider a DGFF on $\wt{\bbT}_{l+1}$ where all conductances are $1$ except on the edges leading to vertices in $\wt{\bbL}_{l+1}$ where they are equal to $1/4$, and boundary values $\psi :\: \{0\}\cup \wt{\bbL}_{l+1} \to \bbR$, where $\psi(0) = 0$ and $\psi(z) = u' - C_0$ for $z \in \wt{\bbL}_{l+1}$. Denoting the law of such field by $\wt{\rmP}_{l+1}$ we thus have,
\begin{equation}
	\rmP_l^{\psi}(-) = \wt{\rmP}_{l+1} (-) \,.
\end{equation}
It then follows from~\eqref{e:2.7} that 
\begin{equation}
	\rmE_n \big(h(x) \,\big|\, \Omega_n(u)\big) \geq \ol{\psi}(x) \,,
	\end{equation}
where $x \in \bbL_l$. As in~\eqref{e:2.8b} the right hand side above is equal to $(u'-C_0)$ times the probability that a random walk on $\wt{\bbT}_{l+1}$ (with conductances as above) starting from $x$ reaches $\wt{\bbL}_{l+1}$ before reaching the root. A similar Gambler-Ruin-type calculation as in~\eqref{e:2.9b}, shows that this probability is at least $1-C/2^l$, so that
\begin{equation}
\ol{\psi}(x) \geq (u'-C_0) (1-C2^{-l})-C' \geq u' - C_0 - C2^{-l} u' - C' \,.
\end{equation}
Plugging in $l = l_u$, we obtain the desired lower bound.

Turning to the upper bound, recalling~\eqref{e:2.43a}, the law of $h$ on the branch $[x]_l$ for $k \leq l$ and $x \in \bbL_l$ can be written as
\begin{equation}
\label{e:4.21}
\rmP_n \Big(\big(h([x]_k)\big)_{k=0}^l \in \cdot \,\Big|\, \Omega_n(u) \Big) =	\ol{\rmP}_l^{\varphi} (\cdot) \,,
\end{equation}
where $\varphi$ is equal to $\varphi_{n,l,u}$ from~\eqref{e:2.38a} (with $l$ in place of $k$).
As in the proof for the lower bound, we start by replacing $\ol{\rmP}_k^{\varphi}$ with a more explicit measure $\ol{\rmP}_l^{\psi}$, where $\psi$ is now defined as
\begin{equation}
	\label{e:4.4}
	\psi(h) \equiv \psi_{n,l,u}(h) :=  \prod_{k=1}^l \tilde{p}_{n-k}(-h(k) + u - m_n + m_{n-k})^{1+\1_l(k)}
\end{equation}
and
\begin{equation}
\label{e:3.13}
	\tilde{p}_k(s) := \exp \Big(-\frac{(s^+)^2 + 2C_0 s}{4-2^{-k+2}} \Big) \,,
\end{equation}
for some constant $C_0 > 0$. The constant $C_0$ is chosen large enough so that, by Lemma~\ref{l:4.2a}, 
\begin{equation}
	\frac{\rmd}{\rmd s} \log \hat{p}_k(s)
		= \frac12 \frac{\rmd}{\rmd s} \log p_k(s) \geq 
		-\frac{1}{2-2^{-k+1}}\big( s^+  + C_0\big) = \frac{\rmd}{\rmd s} \log \tilde{p}_k(s) \,.
\end{equation}
As $\ol{\rmP}_l^\varphi$ is FKG by Lemma~\ref{lemma:FKG}, 
the mean of $h(x)$ only increases if we replace $\ol{\rmP}_l^\varphi$ by $\ol{\rmP}_l^\psi$. 

Next, we consider a tilted version of the law $\ol{\rmP}_l$. To this end, we define $\mu \equiv \mu_{n,l,u}$ via
\begin{equation}
	\mu(k) := (1-2^{-k}) (u - m_n + m_{n-k})
	\quad, \quad k \in [0,l] \,.
\end{equation}
Then, an easy calculation shows that for $k \in [1,l]$,
\begin{equation}
\label{e:3.16}
		\big(\Delta \mu\big)(k) = \big(\mu(k)-\mu(k+1)\big)\1_{k<l}+\left(\mu(k)-\mu(k-1)\right) =
	2^{-k-1+1_{\{l=k\}}} (u-m_n + m_{n-k}) + C_k\,,
\end{equation}
where $\Delta$ is the Laplacian on the linear graph $[0,l]$, and $C_k \equiv C_{n,l,u,k}$ are uniformly absolutely bounded for all $k,n,l$ and $u$.

Abbreviating $u_k \equiv u - m_n + m_{n-k} = u - c_0 k + O(k/n) $, for all $k \in [1,l]$, 
\begin{equation}
\label{e:4.16a}
\begin{split}
- \log \tilde{p}_{n-k}  \big(& -h(k)   - \mu(k) + u_k\big)^{1+\1_l(k)}  + h(k) (\Delta \mu)(k)  \\
	& = \tfrac{1+\1_l(k)}{4-2^{-n+k+2}} \Big( \big( (-h(k) + 2^{-k} u_k )^+\big)^2 - 2C_0 h(k)) \Big) + \big(2^{-k-1+\1_l(k)} u_k+C_k\big)h(k) + C  \\
	& = \tfrac{1+\1_l(k)}{4-2^{-n+k+2}} \Big( h(k)^2 - 2^{-n+1} u_k h(k) - \big((-h(k) + 2^{-k} u_k )^-\big)^2\Big) + C'_k h(k) + C' \\
	& = \tfrac{1+\1_l(k)}{4-2^{-n+k+2}} \Big( \big(h(k) - C''_k \big)^2 - \big((h(k) - 2^{-k} u_k )^+\big)^2\Big) + C'' \,,
\end{split}
\end{equation}
where all constants may depend on $n,k,l,u$ (but non $h$), and $C_k''$ are bounded in absolute value uniformly in these parameters. Above we have absorbed all terms which do not involve $h$ into the constants $C$, $C'$ and $C''$. Such constants will anyhow get cancelled out, once we consider the tilted (and hence normalized) version of $\rmP_k$. Crucial in the above computation is the cancellation of terms of the form $O(u_k h(k))$. This will be important below and is due to the careful choice of $\mu$. 

Then by Lemma~\ref{l:1}, for any $F: \bbR^{[0,l]} \to \bbR$ we have
\begin{equation}
	\label{e:4.18}
	\ol{\rmE}_l^{\psi} F(h) = \frac{\ol{\rmE}_l \big( F(h) \psi(h) \big)}{\ol{\rmE}_l \big( \psi(h) \big)} 
	= \frac{\ol{\rmE}_l \Big( F(h+\mu) \psi(h+\mu) \big) \rme^{-\langle h, \Delta \mu \rangle} \Big)}
	{\ol{\rmE}_l \Big( \psi(h+\mu) \rme^{-\langle h, \Delta \mu \rangle} \Big)} 
	= \ol{\rmE}_l^{\chi} F(h+\mu) \,,
\end{equation}
where $\chi \equiv \chi_{n,l,u} : \bbR^{[0,l]} \to \bbR$ is defined as 
\begin{equation}
\chi(h) :=  \exp \Big( -\sum_{k=1}^l g_{k} \big((h(k)\big) \Big) \,,
\end{equation}
with
\begin{equation}
	g_k(s) \equiv g_{n,k,l,u}(s) := \tfrac{1+\1_l(k)}{4-2^{-n+k+2}} \Big( \big(s - C''_k \big)^2 - \big((s - 2^{-k} u_k )^+\big)^2\Big) \,.
\end{equation}
(Notice that the constant $C''$ from~\eqref{e:4.16a} does not appear in the definition of $g_k$.)

Since all constants are bounded uniformly, there exists $C_1 < \infty$ such that with $l := l_u - C_1$ for all $k \in [1,l]$ we have $2^{-k} u_k \geq 2^{C_1}/4 > C_k''$. 
It follows that the functions $(g_k)_{k=1}^l$ satisfy the conditions of Proposition~\ref{p:2.14} with some universal $a,b,d$ and thus that under $\ol{\rmE}_l^{\chi}$ that random variable $h(l)$ has (at least) an exponentially decaying upper tail with universal constants. Altogether we obtain
\begin{equation}
\rmE_n \big(h(x_l) \,\big| \Omega_n(u) \big) 
\leq
	\ol{\rmE}_l^{\psi} \big(h(l)\big) =
	\ol{\rmE}_l^{\chi} \big(h(l) + \mu(l) \big) \leq
	u' + C\,,
\end{equation}
with $l = l_u - C_1$. By conditioning on $h(x_{l_u - C_1})$ and using Proposition~\ref{l:2.8a}, this upper bound extends to $l=l_u$ as well.
\end{proof}

\subsection{Proof of main theorem}
We can now give
\begin{proof}[Proof of Theorem~\ref{t:1.1}]
Let $u \geq 0$ be large, and, for $0 \leq k \leq n$, set $u_k = u - m_n + m_{n-k} = u - c_0 k + O(k/n)$. Recall from~\eqref{e:2.41e} that the probability in question can be recast as
\begin{equation}
\label{e:4.38}
	p_n(u) = \rmE_{k} \varphi_{n,k,u}(h)
\end{equation}
where
\begin{equation}
\varphi_{n,{k},u}(h) := \prod_{x \in \bbL_{k}} p_{n-{k}} \big(-h(x) + u_k \big) \,.
\end{equation}
Next, define $\mu : \bbT_{k} \to \bbR$ such that $\mu(0) = 0$, $\mu_{{\bbL_{k}}} = u_k$ and such that $\mu$ is discrete harmonic on $\bbT_{k} \setminus \{0\} \setminus \bbL_{k}$. As in~\eqref{e:2.8b}, we have
$\mu([x]_{{k}-1}) = u_k(1-(2^{k}-1)^{-1})$ for $x \in \bbL_{k}$. It follows that,
\begin{equation}
\begin{split}
\frac12 \langle \mu, \Delta \mu \rangle + \langle h, \Delta \mu \rangle & =
\frac12 \sum_{x \in \bbL_{k}} \mu(x) \big(\mu (x) - \mu ([x_{{k}-1}) \big) +
\sum_{x \in \bbL_{k}} h(x) \big(\mu(x) - \mu ([x]_{{k}-1}) \big) \\
& =  \frac{u_k^2}{2(1-2^{-{k}})}  + \frac{u_k}{2^{{k}}-1} \sum_{x \in \bbL_{k}} h(x)  \,.
\end{split}
\end{equation}

We may then use Lemma~\ref{l:1} to rewrite the mean in~\eqref{e:4.38} as
\begin{equation}
\label{e:3}
\rme^{-\frac{u_k^2}{2-2^{-{k}+1}}}
	\rmE_{k} \prod_{x \in \bbL_{k}} p_{n-{k}} (-h(x)) \rme^{-\frac{u_k}{2^{k} - 1} h(x)}\,,
\end{equation}
We now pick $k = {l_u} := \lfloor \log_2 u \rfloor = \log_2 u - [u]_2$, where $[u]_2$ was defined in~\eqref{e:1.7} and set 
\begin{equation}
\label{e:4.42a}
	u' := u - c_0 \lfloor \log_2 u \rfloor \,,
\end{equation}
Then $u_{l_u} = u' + O(n^{-1}\log u)	$, the first exponent in~\eqref{e:3} is equal to
\begin{equation}
\label{e:4.42}
	-\frac{u_{l_u}^2}{2}\big(1+2^{-l_u} + O\big(2^{-2l_u}\big) \big) 
	= -\frac{u'^2}{2} - 2^{[u]_2-1} u + o(u) \,,
\end{equation}
and the second exponent therein is equal to
\begin{equation}
-\big(2^{[u]_2} + f(u)\big) h(x) \,,
\end{equation}
with $f(u) = O(u^{-1}\log u)$ as $u \to \infty$.

Consider now the functions $g^{\delta, a, b},\, g_n^{\delta, a, b} :\ \bbR \to \bbR$ for $n \ge 1$, $\delta \in (0,1)$, $a \in [1,\infty)$, $b \in [-1,1]$, defined for all $s \in \bbR$, via
\begin{equation}
	g^{{\delta, a, b}}_n(s) := \log \Big(p_{\lfloor a n \rfloor}(-s) \rme^{-(2^{\delta}+b/n) s}\Big) \,,
\end{equation}
and
\begin{equation}
	g^{{\delta, a, b}}(s) := \log \Big(p_\infty(-s) \rme^{-2^{\delta} s}\Big) \,.
\end{equation}
Then the product in~\eqref{e:3} can be written as
\begin{equation}
	\exp \Big\{ \sum_{x \in \bbL_{l_u}} g^{{\delta, a, b}}_{l_u} \big(h(x)\big) \Big\} \,,
\end{equation}
where $\delta = [u]_2$ and $a$ and $b$ are chosen so that $a l_u  = n-l_u$ and $b/l_u = f(u)$. This is always possible for all $u$ large enough and $n \geq 1$, by the restriction on $u$ in the statement of the theorem. We claim that the functions in the family above satisfy the assumptions in Proposition~\ref{l:3.2} uniformly as required therein. Indeed, Assumption \textrm{(C1)} and~\textrm{(C2)} follow as argued below~\eqref{e:2.32}. Assumption \textrm{(C3)} follows thanks to Lemma~\ref{l:4.2a}, which shows that $\log p_{n}(s) \leq -(s^+)^2/2 + C$ for all $s$ and $n$. It then follows from Proposition~\ref{l:3.2} and~\eqref{e:4.42} that the logarithm of~\eqref{e:3} is equal to
\begin{equation}
	-\frac{u'^2}{2} - 2^{[u]_2-1} u + (G^{\delta,a,b})^* 2^{l_u} + o(2^{l_u})
	=  -\frac{(u - c_0 \log_2 u)^2}{2} - \theta_{[u]_2}u + e_n(u) \,,
\end{equation}
where $\theta : [0,1) \to \bbR$ is some bounded function and where $e_n(u)/u \to 0$ as $u \to \infty$ uniformly in $n$ as required.

Next, we turn to the second statement in the theorem. In view of Lemma~\ref{l:2.9a} we may assume that $u$ is large enough as needed. We now take the logarithm and differentiate~\eqref{e:3} with respect to $u$. This gives
\begin{equation}\label{eq:4.43}
\begin{split}
	\frac{\rmd}{\rmd u} \big(-\log p_n(u)\big) & = 
	\frac{u_k}{1-2^{-{k}}} + p_n(u)^{-1} \rme^{-\frac{u_k^2}{2-2^{-{k}+1}}} \frac{1}{2^{k}-1} \sum_{x \in \bbL_{k}}  \rmE_{k} \Big[ h(x) 
	\prod_{y \in \bbL_{k}} p_{n-{k}} (-h(y)) \rme^{-\frac{u_k}{2^{k} - 1} h(y)}
	\Big] \\
	& = \frac{u_k}{1-2^{-{k}}}
		+ \frac{1}{1-2^{-{k}}} \rmE_n \big(h(x) - u_k \,\big|\, 
		\Omega_n(u) \big) = \frac{1}{1-2^{-{k}}} \rmE_n \big(h(x)  \,\big|\, 
		\Omega_n(u)\big) \,.
\end{split}
\end{equation}
Above, $x \in \bbL_{k}$ and we used the symmetry of leaves to replace the sum with $2^{k}$ times one term, followed by a tilting back by $\mu$ above.
Choosing $k = l_u$ and using Proposition~\ref{p:3.1} we get
\begin{equation}\label{eq4.44}
	\frac{\rmd}{\rmd u} \big(-\log p_n(u)\big) = u' + O(1)
\end{equation}
with $u'$ as in~\eqref{e:4.42a}.
\end{proof}

\section{Reduction to a random walk subject to a localization force}
\label{s:6}
Given the sharp estimates on the right tail of the minimum, we can express the law of $h$ on a branch from the root under the conditioning on $\Omega_n(s)$ as the law of a random walk with attraction to zero, of the sort considered in Section~\ref{s:3}. 
We shall recast the law of $h$ as both a random walk subject to a pinning potential and a random walk with a localizing drift. 
Proofs in this section are deferred to the end.

We begin with the pinning potential case. Such a reduction was already done in the proof of Proposition~\ref{p:3.1}, albeit using coarser estimates for the right tail of the minimum. 
\begin{proposition}
\label{p:4.1}
There exist $C,c \in (0,\infty)$ such that the following holds. Let $n \geq C$, $u \in (C,2m_n]$, $v \in \bbR$, $x \in \bbL_n$ and set $l_u := \lfloor \log_2 u \rfloor$ and $u' := u-c_0 l_u$. Let also $l \leq l_u - C$. Then,
\begin{equation}
\label{e:4.1}
\rmP_n \Big( \Big(h\big([x]_k\big) - u'(1-2^{-k}) + v
 \: : \: k \in [0, l] \Big) \in \cdot \,\Big|\, 
\Omega_n(u-v) \Big) = \ol{\rmP}_l^\chi \big(h \in \cdot\,\big|\, h(0) = v \big) 
\end{equation}
where
\begin{equation}
\label{e:5.2b}
	\chi(h) := \exp \Big(-\sum_{k=1}^l f_k\big(h(k)\big) \Big) \,,
\end{equation}
and $f_k \equiv f_{n,l,u,k} : \bbR \to \bbR$ satisfy
\begin{equation}
\label{e:4.3}
c\big(|s| - (s - 2^{-k-1}u')^+\big) - C \, \leq \,
\sgn(s) f'_k(s) \, \leq \, C\big(|s| - (s - 2^{-k-1}u')^+\big) + C \,
\end{equation}
for all $k=1, \dots, l$ and all $s \in \bbR$.
\end{proposition}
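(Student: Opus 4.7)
The plan is to follow the strategy of the upper bound in Proposition~\ref{p:3.1} (also used in~\eqref{eq4.44} in the proof of Theorem~\ref{t:1.1}), but replacing the coarser estimates of Lemma~\ref{l:4.2a} by the \emph{sharp} derivative asymptotic from the second part of Theorem~\ref{t:1.1}. First I would apply representation~\eqref{e:2.43a} from Subsection~\ref{s:2.6.2} to express the law of the branch $(h([x]_k))_{k=0}^l$ under $\rmP_n(\cdot\,|\,\Omega_n(u-v))$ as $\ol{\rmP}_l^{\varphi_{n,l,u-v}}(\cdot\,|\,h(0)=0)$. A pointwise shift of the tree field by $v$ converts $\Omega_n(u-v)$ into $\Omega_n(u)$ and moves the starting value of the branch walk from $0$ to $v$, giving $\ol{\rmP}_l^{\varphi_{n,l,u}}(\cdot\,|\,h(0)=v)$. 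I would then invoke Lemma~\ref{l:1} on the linear graph $[0,l]$ with boundary $\{0\}$ and the tilt $\tilde\mu(k):=u'(1-2^{-k})$, which vanishes at $k=0$ and whose graph Laplacian satisfies $\Delta\tilde\mu(k)=u'2^{-k-1}$ for $1\le k<l$ and $\Delta\tilde\mu(l)=u'2^{-l}$. The resulting measure is $\ol{\rmP}_l^\chi(\cdot\,|\,h(0)=v)$ with
\[
\chi(h)\;\propto\;\varphi_{n,l,u}(h+\tilde\mu)\exp(-\langle h,\Delta\tilde\mu\rangle)\;=\;\exp\Bigl(-\sum_{k=1}^l f_k(h(k))\Bigr),
\]
where, writing $A_k:=u_k-\tilde\mu(k)=c_0(l_u-k)+u'2^{-k}+O(k/n)$ and $B_k:=\Delta\tilde\mu(k)$,
\[
f_k(s)\;=\;-\tfrac{1+\1_l(k)}{2}\log p_{n-k}(A_k-s)\;+\;B_k\,s.
\]

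To verify~\eqref{e:4.3} I would differentiate $f_k$ and insert the sharp asymptotic $\tfrac{d}{du}\log p_m(u)=-(u^+-c_0\log_2(u\vee 1))+O(1)$ from Theorem~\ref{t:1.1}, falling back on Lemma~\ref{l:4.2a} only in the extreme regime $u>2^{\sqrt{m}}$, which forces $|s|$ super-polynomial in $n$ and contributes only lower-order corrections. For $k<l$ and $A_k-s>1$ this yields
\[
f'_k(s)\;=\;\tfrac{s}{2}-\tfrac{c_0(l_u-k)}{2}+\tfrac{c_0}{2}\log_2(A_k-s)+O(1),
\]
after using the algebraic identity $B_k=A_k/2-c_0(l_u-k)/2+O(k/n)$; a parallel calculation treats the boundary case $k=l$. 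The crucial step is the cancellation $\tfrac{c_0}{2}\log_2 A_k = \tfrac{c_0(l_u-k)}{2}+O(1)$, which follows because $A_k\asymp u'2^{-k}$ uniformly in $k\in[0,l]$ (since $u'\asymp u=2^{l_u+[u]_2}$ and $l\le l_u-C$). Checking~\eqref{e:4.3} then reduces to a four-region case analysis in $s$: on $(-\infty,0]$ one has $f'_k(s)\approx s/2$, giving linear pushback; on $[A_k,\infty)$ one has $f'_k(s)=B_k+O(1)\approx u'2^{-k-1}$; and on the middle regions $[0,B_k]$ and $[B_k,A_k]$ the expression interpolates continuously, bounded above and below by multiples of $|s|-(s-2^{-k-1}u')^+$ as required.

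The main obstacle is precisely the logarithmic cancellation above. Without the sharp $-c_0\log_2(u\vee 1)$ correction in the derivative of $-\log p_{n-k}$, $f'_k(0)$ would be of order $\log u$ rather than $O(1)$, contradicting the uniform upper bound in~\eqref{e:4.3} at $s=0$. It is exactly the matching of this logarithmic correction with the $-c_0(l_u-k)/2$ shift produced by the choice of centering $\tilde\mu$ that yields functions $f_k$ whose derivatives obey the template (A1)--(A2) of Subsection~\ref{ss:3.1} with uniform constants, thereby enabling the random-walk-with-localizing-potential estimates of Section~\ref{s:3} to be applied downstream.
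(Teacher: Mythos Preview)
Your proposal is correct and follows essentially the paper's approach: branch representation~\eqref{e:2.43a}, centering by $u'(1-2^{-k})$, and then the sharp derivative asymptotic of Theorem~\ref{t:1.1} to verify~\eqref{e:4.3} via case analysis in $s$. The only cosmetic difference is that the paper carries out the tilt by introducing an auxiliary quadratic potential $\psi$ and interpreting $\ol{\rmP}_l^{\psi_{u'}}$ as a DGFF on an augmented graph (for which $\mu(k)=u'(1-2^{-k})$ is the harmonic extension of the boundary data), whereas you apply Cameron--Martin directly on the linear graph; the resulting $f_k$ agree up to additive constants that disappear in the normalization.
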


Next we consider the localizing drift case. As the proof shows, this is in fact a consequence of the previous proposition.
\begin{proposition}
	\label{p:4.2a}
	There exists $C_0 \in (0,\infty)$ such that the following holds.
	Let $n \geq C_0$, $u \in (C_0,2m_n]$ and $x \in \bbL_n$. Set  $l_u := \lfloor \log_2 u \rfloor$, 
	$l'_u := l_u - C_0$, $u' = u - c_0 l_u$, and for all $k \in [0,l'_u]$ also 
	\begin{equation}
		Y_k := h\big([x]_{k}\big) - (1-2^{-k})u' \,.
	\end{equation}
	Then there exist $a, d, D, b \in (0,\infty)$ such that under $\rmP_n\big(-\,\big|\, \Omega_n(u)\big)$ the process $Y = (Y_k)_{k=0}^{l_u'}$ is of the type described in Subsection~\ref{ss:3.2} with $Y_0 = 0$. Moreover, $ad^2$ can be made arbitrarily large by	 choosing $C_0$ arbitrarily large.
\end{proposition}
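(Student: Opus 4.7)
The plan is to exploit Proposition~\ref{p:4.1} twice: once with $l = l'_u$ to establish the Markov property of $Y$, and once with $l = k+1$ for each $k \in [0, l'_u - 1]$ to extract a clean one-step transition kernel. First, Proposition~\ref{p:4.1} (applicable once $C_0$ exceeds the constant appearing therein) will identify the joint law of $(Y_k)_{k=0}^{l'_u}$ under $\rmP_n(\cdot \mid \Omega_n(u))$ with $\ol{\rmP}_{l'_u}^{\chi}(\cdot \mid h(0) = 0)$, where $\chi(h) = \exp(-\sum_k f_k(h(k)))$ factorizes over single sites. Since the base measure $\ol{\rmP}_{l'_u}$ is that of a Gaussian random walk and the tilt is a product of on-site weights, the joint density factors into terms each involving only consecutive pairs of coordinates; this immediately yields that $Y$ is Markov with $Y_0 = 0$.

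For the one-step transition kernel I plan to apply Proposition~\ref{p:4.1} a second time, now with $l = k+1$ in place of $l'_u$. This remains admissible for the same reason on $C_0$, and yields the joint law of $(Y_0, \ldots, Y_{k+1})$ as $\ol{\rmP}_{k+1}^{\tilde\chi}(\cdot \mid h(0) = 0)$ with $\tilde\chi(h) = \exp(-\sum_{j=1}^{k+1} \tilde f_j(h(j)))$ and each $\tilde f_j$ satisfying~\eqref{e:4.3}. Because $k+1$ is terminal in this second representation, there is no future partition function to integrate out, and the conditional density of $h(k+1)$ given $h(k) = v$ is simply proportional to $\exp(-\tilde f_{k+1}(w) - (w-v)^2/2)$. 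By the Markov property just established together with the consistency of the two representations on the joint law of $(Y_0, \ldots, Y_{k+1})$, this is precisely the transition kernel of $Y$; differentiating its log-density in $w = v + u$ will give
\begin{equation*}
\varphi'_{k,v}(v+u) = -\tilde f'_{k+1}(v+u) - u.
\end{equation*}

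The quantitative step will then be to verify~\eqref{e:4.20} and~\textrm{B1}--\textrm{B3} directly from~\eqref{e:4.3} at sub-index $k+1$ (where the plateau of $\tilde f'_{k+1}$ sits at $2^{-k-2}u'$). Writing $s := v+u$: in the linear regime $|s| \leq 2^{-k-2}u'$ one has $\tilde f'_{k+1}(s) = \kappa s + O(1)$ with effective slope $\kappa \in [c, C]$ (of the appropriate sign), so the drift is $-\kappa v - (\kappa+1)u + O(1)$, linear in $u$ with zero at $u^\ast = -\kappa v/(\kappa+1) + O(1)$; in the plateau regime $s > 2^{-k-2}u'$ the drift acquires slope $-1$. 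Choosing $a := 2/(C+1)$, the envelopes $l_{a, \ul{d}_v}$ and $l_{a^{-1}, \ol{d}_v}$ will bracket the drift because its slope throughout lies in $[-(C+1), -1]$, with $2a$ on the gentle side and $2a^{-1}$ on the steep side of $u^\ast$. One then reads off $\ul{d}_v \geq c|v|/(c+1) - D$ for $v < -b$ (giving~\textrm{B1}), the symmetric statement for $v > b$ (\textrm{B2}), and $|\ul{d}_v| \vee |\ol{d}_v| \leq |v| + D$ (\textrm{B3}), provided $b$ is chosen large enough relative to the constants in~\eqref{e:4.3}.

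For the final assertion that $ad^2$ can be made arbitrarily large, $a$ is a fixed constant depending only on $c$ and $C$, while $d$ grows linearly with $b$. The admissible range of $b$ is limited only by the plateau threshold $2^{-k-2}u'$, which for $k$ near $l'_u = l_u - C_0$ is of order $2^{C_0}$; hence $b \asymp 2^{C_0}$ is available, giving $d \asymp 2^{C_0}$ and $ad^2 \to \infty$ as $C_0 \to \infty$. The principal technical obstacle is the quantitative step: tracking $u^\ast$ uniformly in $v$ across the regime change of $\tilde f'_{k+1}$, handling the asymmetry of the potential (the plateau exists only on the positive side), and confirming that the bounded error terms in~\eqref{e:4.3} translate into only $O(1)$ corrections to $\ul{d}_v, \ol{d}_v$, which get absorbed into $D$.
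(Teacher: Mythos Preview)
Your approach is correct and essentially the same as the paper's: both extract the one-step transition kernel via Proposition~\ref{p:4.1} and then verify \textrm{B1}--\textrm{B3} and the $ad^2$ claim from the bounds~\eqref{e:4.3}. The only difference is bookkeeping in how the kernel is obtained: the paper invokes the Gibbs--Markov property of $h$ to pass to the subtree of depth $n-k$ and applies Proposition~\ref{p:4.1} there with $l=1$ and shifted parameters $w,w'$ (incurring the small change of variables $s\to s'$ in~\eqref{e:5.25}), whereas you apply Proposition~\ref{p:4.1} on the original tree with $l=k{+}1$ so that the terminal index carries no future partition function; both routes produce the same density $\varphi'_{k,v}(v+u)=-\tilde f'_{k+1}(v+u)-u$ with threshold $2^{-k-2}u'\,(\approx w'/4)$, and the remainder of the argument coincides.
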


\begin{proof}[Proof of Proposition~\ref{p:4.1}]
Suppose first that $v = 0$, so that $w=u$. Abbreviate, as before, 
\begin{equation}
u_k \equiv u - m_n + m_{n-k} = u - c_0 k + O(k/n)\,,
\end{equation}
and let 
\begin{equation}
\mu(k) := u'(1-2^{-k}) \,.	
\end{equation}
As in the proof of Proposition~\ref{p:3.1},  we use the representation in Subsection~\ref{s:2.6.2} to write the left hand side in~\eqref{e:4.1} as
\begin{equation}
	\ol{\rmP}_l^{\varphi}\big( h - \mu \in \cdot \big)
\end{equation}
where
\begin{equation}
	\varphi(h) := \exp \Big(-\sum_{k=1}^l -\log \hat{p}_{n-k} \big(u_k - h(k)\big) \big(1+\1_l(k)\big) \Big) \,.
\end{equation}
Setting for $t \in \bbR$, 
\begin{equation}
	\psi(h) := \exp \Big(-\frac{1}{4} \sum_{k=1}^l \big(t - h(k)\big)^2 \big(1+\1_l(k)\big) \Big) \,,
\end{equation}
we may write
\begin{equation}
\label{e:4.22}
	\ol{\rmP}_l^{\varphi}\big( h - \mu \in \cdot \big) = \Big(\ol{\rmP}_l^{\psi_{u'}}\Big)^{\varphi/\psi_{u'}} \big( h - \mu \in \cdot \big)\,.
\end{equation}

The advantage of the last rewrite is that $\ol{\rmP}_l^{\psi_{u'}}$ can be seen as the law of a DGFF on a graph $\bbG'$ with vertices $\bbV' = \{0,\dots, l\} \cup \{1', \dots, l'\}$ and edges $\bbE' = \big\{\{k,k-1\}, \{k, k'\} :\: k =1, \dots, l\big\}$. This graph has conductances which are $1$ on $\{k,k-1\}$ and $\frac12(1+\1_l(k))$ on $\{k, k'\}$ for all $k \in [1,l]$. Furthermore, this DGFF is taken to have boundary conditions $\varphi$ on $\{0\} \cup \{k' :\: k \in [1,l]\}$, given by $\varphi(0) = 0$ and $\varphi(k') \equiv u'$ for all $k$.

It is not difficult to check that $\mu = \ol{\varphi}$ and therefore, by~\eqref{e:2.7}, we may take boundary conditions $\varphi \equiv 0$ at the cost of adding $\mu$ the resulting field. This equates the laws in~\eqref{e:4.22} with
\begin{equation}
\label{e:4.11}
	\Big(\ol{\rmP}_l^{\psi}\Big)^{(\varphi/\psi_{u'})(\cdot + \mu)} \big(h \in \cdot \big)
	= \ol{\rmP}_l^{\xi} \big(h \in \cdot\big)
\end{equation}
where
\begin{equation}
\xi(h) = \psi_0(h) \psi(h+\mu)/ \psi_{u'}(h+\mu) =
\exp \Big(-\sum_{k=1}^l f_k \big(h(k)\big) \big(1+\1_l(k)\big) \Big) \,,
\end{equation}
\begin{equation}
\label{e:4.12}
f_k \big(u\big) = \tfrac14 u^2  + g_k(u)
\end{equation}
and
\begin{equation}
\label{e:4.13}
g_k(s) := -\log \hat{p}_{n-k} \big(u_k - (s+\mu(k)) \big) - 
	\frac{1}{4} \big(u' - (s+\mu(k))\big)^2 \,.
\end{equation}
Observe that 
\begin{equation}
\label{e:105.14}
s' - \mu(k) = 2^{-k} s' 
\quad , \qquad 
	u_k - \mu(k) = 2^{-k} u' + c_0 (\lfloor \log_2 u\rfloor - k) + O(k/n) \,,
\end{equation}

To show~\eqref{e:4.3}, whenever $s \leq 2^{-k-1} u'$, we have
\begin{equation}
	u_k - \mu(k) - s \geq 2^{-k-1} u' - 1 \,,
\end{equation}
so that by the Mean Value Theorem, 
\begin{equation}
\frac12 c_0 \Big| \log_2 \big(u_k - (s - \mu(k)\big) - \big(\lfloor \log_2 u\rfloor  - k\big) \Big|
	\leq \frac12 c_0 \Big(1+\frac{|s|}{2^{-k-1} u'-1} \Big) 
	\leq \frac14 |s| + C \,,
\end{equation}
assuming that $k \leq l \leq l_s - C'$ for $C'$ large enough and also that $u$ is large enough.

In this range of $s$ we can use Theorem~\ref{t:1.1} and Relation~\eqref{e:102.45} to claim that
\begin{equation}
	\bigg|\frac{\rmd}{\rmd s} g_k(s) \bigg| \leq \frac14 |s| + C \,.
\end{equation}

On the other hand, if $s > 2^{-k-1} u'$, we lower bound the derivative of the first term in~\eqref{e:4.13} by $0$ and upper bound it by $C < \infty$, using Lemma~\ref{l:2.9a}, so that
\begin{equation}
	\bigg|\frac{\rmd}{\rmd s} g_k(s) - \frac{1}{2} \big(2^{-k}u' - s \big)\bigg| \leq  C \,.
\end{equation}
Combining these two bounds and using that the derivative of the first term of $f_k(s)$ is $\frac12 s$, we thus get~\eqref{e:4.3}.

Now if $v \neq 0$, we write the left hand side of~\eqref{e:4.1} as 
\begin{equation}
\rmP_n \Big( \Big(h\big([x]_k\big) - u'(1-2^{-k}) - v
 \: : \: k \in [0, l] \Big) \in \cdot \,\Big|\, 
\Omega_n(u),\, h(0) = v \Big) \,,
\end{equation}
with the formal conditioning used to specify boundary conditioning at the root. We now proceed exactly as before. In particular, we use that $\ol{\rmP}_l^{\psi_{u'}}(\cdot\,|\,h(0) = v)$ is now the law of a DGFF on $\bbG'$ with $\varphi$ equal to $v$ at $0$ and $u'$ at $k'$ for all $k \in [1,l]$. Again, we replace the boundary conditions $u'$ with that of $0$, keeping the value $v$ as the height of the field at $0$, at the cost of adding $\mu(k)$ to $h(k)$ for all $k \in [1,l]$. All together this equates the probability in question with
\begin{equation}
	\ol{\rmP}_l^{\xi} \big(h \in \cdot \,\big|\, h(0) = v \big)
\end{equation} 
where $\xi$ is defined exactly as before.
\end{proof}

\begin{proof}[Proof of Proposition~\ref{p:4.2a}]
The Markovian property of the process $Y$ follows from Gibbs-Markov structure of $h$. 
Now, by definition, for any $k \in [0,l'_u]$, $s,t \in \bbR$, the density 
\begin{equation}
\rme^{\varphi_{t,k}(s)} := \frac{\rmP(Y_{k+1} \in \rmd s | Y_k = t)}{\rmd s} 
\end{equation}
is equal to 
\begin{equation}
\label{e:105.22}
\begin{split}
\rmP_n& \Big(h\big([x]_{k+1}\big) - (1-2^{-k-1})u'  \in \rmd s \,\Big|\, 
		h\big([x]_{k}\big)  = t + (1-2^{-k})u' \,,\,\, \Omega_{n}(u)\Big)/\rmd s \\
& =
\rmP_{n-k}\Big(h\big([x']_1\big) - 2^{-k-1}u'\ + t  \in \rmd s \,\Big|\, 
\Omega_{n-k}(w-t)\Big)/\rmd s \\
& = 
\rmP_n\Big(h\big([x']_1\big) - w'/2 + t  \in \rmd s' \,\Big|\, 
\Omega_{n-k}(w-t) \Big) /\rmd s'
\,,
\end{split}
\end{equation}
where $x' \in \bbL_{n-k}$, 
\begin{equation}
w := u - (1-2^{-k})u' - m_n + m_{n-k} = 2^{-k}u' + c_0 l_u - c_0 k + O(k/n)\,,
\end{equation}
\begin{equation}
w' := w - c_0 l_w
\ , \quad l_w := \lfloor \log_2 w \rfloor \,,
\end{equation}
and
\begin{equation}
\label{e:5.25}
	s' = s - (w'/2 - 2^{-k-1} u') = s - O\Big(\log \big(2^{-k} u\big)/ \big(2^{-k} u\big) + k/n\Big) \,,
\end{equation}
with the $O$ terms made arbitrarily small by choosing $C_0$ large enough.

Thanks to Proposition~\ref{p:4.1}, we therefore have
\begin{equation}
\varphi_{t,k}(s) = -\tfrac12(s'-t)^2 - f_1(s') + C\,,
\end{equation}
so that,
\begin{equation}
	\varphi'_{k,t}(s) = 
	-(s'-t) - f'_1(s') \,,	
\end{equation}
It then follows from~\eqref{e:4.3} that 
\begin{equation}
\ul{l}(s') \leq \varphi'_{k,t}(s) \leq
\ol{l}(s')
\end{equation}
where 
\begin{equation}
\ul{l}(s') := -(1+c) s' 1_{(-\infty, 0]}(s') - (1+C) s' 1_{(0, w'/4]}(s') - 
(C w'/4 + s') 1_{(w'/4, \infty)}(s') + 
t - C  
\end{equation}
and
\begin{equation}
\ol{l}(s') := -(1+C) s' 1_{(-\infty, 0]}(s') - (1+c) s' 1_{(0, w'/4]}(s') - 
(c w'/4 + s') 1_{(w'/4, \infty)}(s') + t + C \,.
\end{equation}
Thanks to the bounded derivative of $\ul{l}$ and $\ol{l}$ and~\eqref{e:5.25}, by potentially increasing $C$, we can ensure that also
\begin{equation}
\ul{l}(s) \leq 	\varphi'_{k,t}(s) \leq 
\ol{l}(s) \,.
\end{equation}

Now, both $\ol{l}$ and $\ul{l}$ are piecewise linear functions which are decreasing and have minimal and maximal slopes $-(1+C)$ and $-1$ respectively. Letting $\ul{s}$, $\ol{s}$ be the unique values of $s$ such that $\ul{l}(\ul{s}) = \ol{l}(\ol{s}) =0$, it follows that both
\begin{equation}
	\ul{l}(s) \geq (s-\ul{s})^- -(1+C)(s-\ul{s})^+
\end{equation}
and
\begin{equation}
	\ol{l}(s) \leq (1+C)(s-\ol{s})^- -(s-\ol{s})^+\,.
\end{equation}
Moreover, it is not difficult to see that 
\begin{equation}
\ul{s} \geq -\frac{(t-C)^-}{1+c} + \frac{(t-C)^+}{1+C}
\end{equation}
and
\begin{equation}
\ol{s} \leq -\frac{(t+C)^-}{1+C} + \frac{(t+C)^+ \wedge (1+c)w'/4}{1+c} + 
	\big((t+C)^+ - (1+c)w'/4\big)^+ \,.
\end{equation}

Setting $\ul{d}_t := \ul{s}-t$, $\ol{d}_t := \ol{s}-t$ and plugging in $s+t$ in place of $s$, we thus get
\begin{equation}
	(s-\ul{d}_t)^- -(1+C)(s-\ul{d}_t)^+
	\leq \varphi'_{k,t}(t+s) \leq
		(1+C)(s-\ol{d}_t)^- -(s-\ol{d}_t)^+\,,
\end{equation}
with all constants universal, as long as $C_0$ is chosen large enough. Setting $a := (1+C)^{-1}$ we thus get~\eqref{e:4.20} using Remark~\ref{r:2.2}.

It is easy to verify that $\ul{d}_t$, $\ol{d}_t$ satisfy both
\begin{equation}
	\ul{d}_t \geq 
	\Big(-\frac{c}{1+c}t + \frac{C}{1+c}\Big) 1_{\{t \leq 0\}} + \
	\Big(-\frac{C}{1+C}t - \frac{C}{1+c}\Big) 1_{\{t > 0\}} 
\end{equation}
and 
\begin{multline}
	\ol{d}_t \leq
	\Big(-\frac{C}{1+C}t + \frac{C}{1+c}\Big) 1_{\{t \leq 0\}} + \
	\Big(-\frac{c}{1+c}t + \frac{C}{1+c}\Big) 1_{\{t \in [0, (1+c)w'/4-C]\}} \\
 	+\Big(C-cv'/4\Big) 1_{\{t \in [(1+c)w'/4-C, \infty)\}} \,.
\end{multline}

For any $d > 0$, by taking $C_0$ large enough in the statement of the proposition, we may ensure that $w'/4$ is so large, that there exists $w_0 > 0$ for which $\ol{d}_t \leq -d$ whenever $t > w_0$ and $\ul{d}_t > d$ whenever $t < -w_0$. Since also $\ol{d}_t \vee \ul{d}_t \leq |t| + D$ for all $t$ and some $D > 0$, 	the conditions in Subsection~\ref{ss:3.2} are satisfied. Moreover, since $a$ does not depend on $C_0$, and $d$ can be made arbitrarily large by choosing $C_0$ properly, the same applies to $ad^2$.
\end{proof}

\section{Mean, fluctuations and covariances}
\label{s:7}
In this section we prove Theorems~\ref{t:1.3}, Corollary~\ref{c:1.4} and Theorem~\ref{t:1.4}.  For what follow, we abbreviate,
\begin{equation}
	\label{e:102.1}
	\hat{h}(x) \equiv h(x) - \mu_n(x) \,,
\end{equation}
with $\mu_n$ as in~\eqref{e:1.10a}. 

\begin{proof}[Proof of Theorem~\ref{t:1.3}]
Let $u > 0$, $x \in \bbT_n$, $k := |x|$ and suppose first that 
$k \leq l'_n := l_n - C_0$, for some $C_0$ large enough to be determined later.  Set
\begin{equation}
\ul{k}_u := \big(\big\lfloor l_n - \log_2 (u\vee 1) \big\rfloor\big) \wedge k \vee 0
\quad , \qquad
\ol{k}_u := \tfrac12 \big(k+\ul{k}_u\big) \,.
\end{equation}
Thanks to Proposition~\ref{p:4.1} with $r=0$ and $s = m_n$, the law $\big(h\big([x]_i\big) - m_{n'}(1-2^{-i}) :\: i \leq k\big)$ is $\ol{\rmP}_l^\chi$, namely a random walk $(h(i) :\: i \leq k)$ with standard Gaussian steps, starting from $0$ and measure changed by
\begin{equation}
	\chi(h) := \exp \Big(-\sum_{i=1}^k f_i\big(h(i)\big) \Big) \,,
\end{equation}
with $f_k$ satisfying~\eqref{e:4.3}. 

By choosing $f_i(0)$ large enough for all $i \leq k$, as we are free to do, and also 
taking $C_0$ large, we can guarantee, via~\eqref{e:4.3}, that the function $v \mapsto \sgn(v)f_i(v)$ is increasing whenever $|v| > C$, and that for all $v \in \bbR$, 
\begin{equation}
	f_i(v) \in \big(c(|v|(v^-+1)\,,\,\, C(|v|^2 + 1)\big) ,\, \forall i \in [0, k] \,,
\end{equation}
and 
\begin{equation}
	f_{\ul{k}_u}(v) > c v (v \wedge u) 
	\ , \quad 
	f_{i}(v) < C\big(v(v \wedge u 2^{-\frac12(k-\ul{k}_u)}
	2^{-(i-\ol{k}_u)}+1)\big) ,\, \forall i \in \big[\ol{k}_u, k\big] \,.
\end{equation}
% Need - f_i-s non-negative, upper bound and lower bound increasing/decreasing away sufficiently away zero.
	
Given $l \in [1, k)$, we now split $\chi$ into $\chi'_l$ and $\chi''_l$ where
\begin{equation}
	\chi'(h) := \exp \Big( - \tfrac12 f_l\big(h(l) \big)-\sum_{i=1}^{l-1} f_i\big(h(i)\big)\Big) \,,
\end{equation}
and
\begin{equation}
	\chi''(h) := \exp \Big(- \tfrac12 f_l\big(h(l)\big) -\sum_{i=l+1}^k f_i\big(h(i)\big)\Big)  \,,
\end{equation}
so that $\chi(h) = \chi'(h) \chi''(h)$. In particular, conditioning on $h(i)$ for $i \leq l$,  we may write the probability in~\eqref{e:1.9a} as 
\begin{equation}
\label{e:4.43}
	\frac{\ol{\rmE}_{l}^{\chi'} \Big[ \ol{\rmE}_k \big(\chi''(h) ; \; h(k) > u  \, \big|\, h(l) = v \big)\big|_{v=h(l)} \Big]}
{\ol{\rmE}_{l}^{\chi'} \Big[ \ol{\rmE}_k \big(\chi''(h) \, \big|\, h(l) = v\big)\big|_{v=h(l)} \Big]}.
\end{equation}

Now, for an upper bound on the upper tail, we choose $l = \ul{k}_u$ in the above decomposition. Then for $A > 0$ large enough, the denominator is at least
\begin{equation}
\label{e:4.46}
\ol{\rmP}_{\ul{k}_u}^{\chi'} \big( |h(\ul{k}_u)| < A \big)
c\rme^{-C(k-\ul{k}_u)} \ol{\rmP}_{k-\ul{k}_u} \Big( \max_{0 \leq i \leq k-\ul{k}_u} |h(i)| < A \Big) \,,
\end{equation}
where $C := \max_{i \leq k} \|f_i\|_{\bbL^\infty([-2A,2A])}$ depends on $A$, but uniformly bounded in $n$. Thanks to Proposition~\ref{p:2.14}, for $A$ large enough we may bound from below the first probability in the right hand side by a positive constant. By a standard random walk estimate, the last probability is at least $c'\rme^{-C'(k-\ul{k}_u)}$ for some $C',c' \in (0, \infty)$ which depend on $A$. The denominator in~\eqref{e:4.43} can thus be lower bounded by $c''\rme^{-C''(k-\ul{k}_u)}$.

At the same time, the conditional mean in the numerator is at most 
\begin{equation}
\label{e:4.27}
	\rme^{- cv (v \wedge u)/2}
	\ol{\rmP}_{k-\ul{k}_u} \big( h(k-\ul{k}_u) > u-v \big)
\leq \exp \Big(- c'v (v \wedge u) -c\frac{\big((u-v)^+\big)^2}{\ol{\sigma}_n(x,u)}\Big) \,,
\end{equation}
where the first exponent accounts for the contribution of $\tfrac12 f_{\ul{k}_u}(h(\ul{k}_u))$ to the sum in the exponent and the second is due to the Gaussian tail of $h(k-\ul{k}_u)$, which under $\ol{\rmP}_{k-\ul{k}_u}$ has variance $k-\ul{k}_u \leq \big(\log_2 (u \vee 1) - 
(l_n - k)\big)^+ + C \leq C\ol{\sigma}_n(x,u)$.

If $v < u/2$, the second exponent is at most $-cu^2/\ol{\sigma}_n(x,u)$. In the case when $v > u/2$, we upper bound the the first by $-c'u^2/4 < -cu^2/\ol{\sigma}_n(x,u)$. 
Combined with the lower bound on the denominator of~\eqref{e:4.43}, this gives the desired upper bound on the upper tail.

For a matching lower bound, we take $l = \ol{k}_u$ in~\eqref{e:4.43}. We then upper bound the denominator trivially by $1$, as all $f_i$-s are non negative. The numerator on the other hand is at least
\begin{multline}
\label{e:5.44}
\ol{\rmP}_{\ol{k}_u}^{\chi'} \big( |h(\ol{k}_u)| < A \big) 
\exp \Bigg(-C u^2 2^{-\frac12(k-\ul{k}_u)} \sum_{i = 0}^{k-\ol{k}_u} 2^{-i} \Bigg) \\
\rmP_{k-\ol{k}_u} \Big(h(k-\ol{k}_u) \geq u + A \,,\,\,
 |h(i)| \in [-A, 2u + A] :\: i \in [0, k-\ol{k}_u]
  \Big)\,,
\end{multline}
where $C$ depends on $A$. Again by Proposition~\ref{p:2.14}, the first probability above is at least $c > 0$ for $A$ large enough, and the second is at least
\begin{equation}
c'\rme^{-C'u^2/(k-\ol{k}_u)} 
= c'\rme^{-2C'u^2/(k-\ul{k}_u)} \geq c'\rme^{-C''u^2/\ol{\sigma}_n(x,u)} \,,
\end{equation}
for some $C',c' \in (0,\infty)$ which depend on $A$, by a standard random walk estimate. As the sum in the exponent in~\eqref{e:5.44} is uniformly bounded, and $2^{1/2(k-\ul{k}_u)} > c(k-\ul{k}_u + 1) > c'\ol{\sigma}_n(x,u)$, combined with the upper bound on the denominator, we get the desired lower bound on the upper tail.

For the lower tail the argument is similar and even simpler. We take $l = k$ in~\eqref{e:4.43}. A lower bound on the denominator is obtained exactly as in the case of the upper bound, and now amounts to $c' \rme^{-C'(k-l)} > c > 0$.
The numerator is at most $\sup \big\{\rme^{-f_k(-v)/2} :\: v > u\big\} \leq C\rme^{-cu^2}$.
For the opposite bound, we upper bound the denominator by $1$ and lower bound the numerator by
\begin{equation}
\ol{\rmP}_{k}^{\chi'} \big( |h(k)| < A \big)	
c\rme^{-Cu^2}
\ol{\rmP}_{1} \big( h(1) \in [-u-1, -u] \big)	
\geq 
c'\rme^{-C' u^2} \,,
\end{equation}
where we used that $h(1)$ is a standard Gaussian under $\ol{\rmP}_1$ and that $f_k(v) < C(|v|^2+1)$.

We turn to the case when $k > l'_n$. Conditioning on $h(y)$ with $y=[x]_{l'_n}$, using the just established bounds, FKG, Lemma~\ref{l:2.4} and since $\inf_{v \geq 0} \rmP_{n-l_n'}(\Omega_{n-l_n'}(-v)) > 0$, by the tightness of the minimum, we may upper bound the upper tail by
\begin{equation}
\label{e:5.52a}
\begin{split}	
 \rmP_{n-l'_n} & \big(h(x') > u \,\big|\, \Omega_{n-l_n'}(0)\big)
	+
	\sum_{v = 1}^\infty 
		\rmP_n^+ \big(\hat{h}(y) \geq v-1\big)		\rmP_{n-l_n'} \Big(h(x') > u-v \,\big|\, \Omega_{n-l_n'}(-v)\Big) \\
& \leq C\rme^{-u^2/|x'|}
	+ C \int_{v = 1}^\infty \rme^{-cv^2/(\log_2 v + 1)}  \rme^{-(u-v)^2/2|x'|} \rmd v
\leq C' \rme^{-c'u^2/(\log_2 (u \vee 1) + |x'|)} \,, 
\end{split}
\end{equation}
where $x' \in \bbL_{k - l'_n}$ and where the last inequality follows from a standard computation.
For the other direction, we use FKG to lower bound the desired probability by
\begin{equation}
\rmP_n^+ \bigg(\hat{h}(y) \geq \frac{\ol{\sigma}_n(y,u)/2C}{\ol{\sigma}_n(y,u)/2C + |x'|}u \bigg)
	\rmP_{n-l_n'}  \bigg(h(x') \geq \frac{|x'|}{\ol{\sigma}_n(y,u)/2C + |x'|} u\bigg)
		 \,,
\end{equation}
Then, by the just established lower bound on the upper tail and since $h(x')$ is a centered Gaussian with variance $|x'|$, the latter is at least
\begin{equation}
	c \exp \bigg(-\frac{u^2}{\ol{\sigma}_n(y,u)/2C + |x'|} \bigg)
	\geq c'\exp \bigg(-C'\frac{u^2}{\ol{\sigma}_n(x,u)} \bigg) \,.
\end{equation}
Lastly we treat the lower tail when $k > l_n'$. For an upper bound, we have as before,
\begin{equation}
\begin{split}	
 \rmP_{n-l'_n} & \big(h(x') < -u \,\big|\, \Omega_{n-l'_n}(0)\big)
	+
	\sum_{v = 1}^\infty 
		\rmP_n^+ \big(\hat{h}(y) \leq -v+1\big)
		\rmP_{n-l'_n} \Big(h(x') < -u+v \,\big|\, \Omega_{n-l'_n}(v)\Big) \\
& \leq C\rme^{-u^2/|x'|}
	+ C \int_{v = 1}^\infty \rme^{-cv^2} \rme^{-(u-v)^2/2|x'|} 
\leq C \rme^{-c'u^2/|x'|} \,, 
\end{split}
\end{equation}

For a matching lower bound, by Lemma~\ref{l:2.4}, we lower bound the desired probability by
\begin{equation}
\label{e:105.56}
P_n^+ \big(\hat{h}(y) \leq 0\big)
P_{n-l_n'} \big(h([x]) \leq -u \,|\, \Omega_{n-l_n'}(0)\big)
\, \geq \,
c P_{n-l_n'} \big(h([x]) \leq -u \,,\,\, \Omega_{n-l_n'}(0)\big)\,,
\end{equation}
where in the second inequality we used the already established lower bound on the lower tail.
At the same time, the last probability in~\eqref{e:105.56} is at least
\begin{multline}
\label{e:105.58}
	P_{n-l'_n} \Big(h(x') \in [-u-1, -u] \Big) \\
	P_{n-l'_n} \bigg(\min_{1 \leq i \leq k-l_n'} \Big(h([x']_i) + m_{i} - 
	 (i \wedge (k-l'_n))^{1/4} \Big) \geq 0 \,\bigg|\, h(x') = -u\bigg) \\
	\prod_{i \leq k-l_n'} 
	\bigg(P_{n'-i} 
	\bigg(\min_{\bbL_{n'-i}} h \geq -m_{n'} + m_{i}  -
	 (i \wedge (k-l'_n-i))^{1/4} \bigg)\bigg)^{\frac12(1+\1_{\{i=k-l_n'\}})}\,.
\end{multline}

The first term in the product above is lower bounded by $\rme^{-Cu^2/|x'|}$ by a standard Gaussian tail bound.
The second probability can be written as 
\begin{equation}
\label{e:105.59}
	P_{n-l'_n} \bigg(\min_{1 \leq i \leq k-l_n'} \bigg(h([x']_i) + m_{i} -
	\frac{i}{k-l'_n} u - (i \wedge (k-l'_n))^{1/4} \bigg) \geq 0 \,\bigg|\, h(x') = 0\bigg) \\
\end{equation}
If $u \leq m_{k-l_n}/2$, then  
\begin{equation}
m_{i} - \frac{i}{k-l'_n} u - (i \wedge (k-l'_n))^{1/4} \geq 	ci-C \,,
\end{equation}
so that the last probability can be bounded from below by a uniform constant $c > 0$.
Otherwise, since $u \leq m_{k-l_n}$ as in the statement of the theorem, we lower bound the last left hand side by
\begin{equation}
m_{i} - \frac{i}{k-l'_n} m_{k-l_n} - (i \wedge (k-l'_n))^{1/4} \geq 	
-c(i \wedge (k-l'_n))^{1/4} - C\,.
\end{equation}
In this case, the probability in~\eqref{e:105.59} is at least $c/(|x'|+1)$ by a standard Ballot-Type estimate (c.f.,~\cite{CHL17Supp}). 

Finally, since
\begin{equation}
	-m_{n'} + m_{i}  -
	 (i \wedge (k-l'_n-i))^{1/4}
	 \leq -m_{n'-i} - c(i \wedge (k-l'_n-i))^{1/4} + C,
\end{equation}
we may use Lemma~\ref{l:2.10} to lower bound the probability in the product in~\eqref{e:105.58} by
\begin{equation}
	1 - C\rme^{-c(i \wedge (k-l'_n-i))^{1/4}} \leq 	\exp \Big(-C\rme^{-c(i \wedge (k-l'_n-i))^{1/4}}\Big) \,.
\end{equation}
Since the last exponent is summable in $i$, this shows that the product in~\eqref{e:105.58} can be lower bounded by a constant $c > 0$.

Collecting all bounds, the lower tail probability in~\eqref{e:1.9b} is lower bounded by 
\begin{equation}
	c\rme^{-Cu^2/|x'|} \Big((1+|x'|)^{-1} + \1_{\{u \leq m_{|x'|	}/2\}}\Big) 
	\geq c'\rme^{-C'u^2/|x'|} \,.
\end{equation}
\end{proof}

Combining Theorem~\ref{t:1.3} and Proposition~\ref{l:2.8a}, the proof of Corollary~\ref{c:1.4} is straightforward.
\begin{proof}[Proof of Corollary~\ref{c:1.4}]
Let $x \in \bbT_n$. When $|x| \leq l_n$, we have $\ul{\sigma}_n(x) \leq 1$ and $\ol{\sigma}_n(x) \leq \log_2 u + 1$, so that 
 $(h(x) - \mu_n(x) :\: n \geq 0, |x| \leq l_n)$ is uniformly integrable in $u$ by Theorem~\ref{t:1.3}  and hence~\eqref{eq:1.14} follows.
Turning to the case $|x| \geq l_n$, for $0 \leq k \leq n$, with $x_k \in \bbL_k,$ set
\begin{equation}
	\nu_{n,k}(u) := \rmE_{n} \big(u + h(x_k) \,\big|\, \Omega_{n}(-u)\big) \,.
\end{equation}
By Proposition~\ref{l:2.8a}, we have $u \leq \nu_{n,k}(u) \leq u + u^- + C \leq u^+ + C$.
Therefore, 
\begin{equation}
 \rmE_n^+ \big(h([x]_{l_n}) - m_{n'}\big) \leq  \rmE_n^+ \nu_{n', k-l_n} \big(h([x]_{l_n}) - m_{n'}\big) \leq \rmE_n^+ \big(h([x]_{l_n}) - m_{n'}\big)^+ + C\,.
\end{equation}
Since the first and last mean are bounded by a constant, thanks to the first part of the proof, it remains to observe that the middle expectation is precisely the left hand side of~\eqref{eq:1.14} when conditioning on $h([x]_{l_n})$ and using the Tower Property.
\end{proof}

For the covariances, we have
\begin{proof}[Proof of Theorem~\ref{t:1.4}]
We shall prove~\eqref{e:1.14} with $h$ replaced by $\hat{h}$. Let $k := |x\wedge y|$ and suppose first that $k \geq l'_n := l_n - C_0$ for some $C_0 > 0$ to be chosen later.
Then conditioning on $\hat{h}([x]_k) = \hat{h}([y]_k)$ and using the total covariance formula, symmetry and the Gibbs-Markov property of $h$, 
\begin{equation}
\label{e:6.64}
	\Cov_n^+ \big(\hat{h}(x), \hat{h}(y)\big) = 
	\Var_n^+ \big(\rmE_n^+ \big(\hat{h}(x)\,\big|\ \hat{h}([x]_k) \big)\big) \,.
\end{equation}	
Conditioning further on $\hat{h}([x]_{l'_n})$ the latter is equal to
\begin{equation}
\label{e:1.16}
	\Var_n^+ \big(\rmE_n^+\big(\hat{h}(x)\,\big|\ \hat{h}([x]_{l'_n} \big)\big) +
	\rmE_n^+ \Big(\rmVar_n^+ \Big(\rmE_n^+\big(\hat{h}(x)\,\big|\ \hat{h}([x]_{k}\big) \big)\,\big|\, \hat{h}([x]_{l'_n})\Big) \Big) \,.
\end{equation}	

We wish to show that the first term in~\eqref{e:1.16} is $O(1)$ and the second is $O(k-l'_n)$. To this end by Proposition~\ref{l:2.8a} with $x' \in \bbL_{n'}$ we have
\begin{equation}
\Big|\rmE_n^+\big(\hat{h}(x)\,\big|\ \hat{h}([x]_{l'_n}) = v \big)\Big|
= \Big|v+\rmE_{n'} \big(h(x')\,\big|\, \Omega_{n-l'_n}(-v)\big) \Big|
\leq |v| + C\,.
\end{equation}
 Thus, the first term in~\eqref{e:1.16} is at most 
\begin{equation}
	C \Big(\rmE_n^+ \big(\hat{h}([x]_{l'_n})\big)^2 + 1\Big)
	\leq C' \,,
\end{equation}
by the tails of $\hat{h}([x]_{l'_n})$ under $\rmE_n^+$, as given by Theorem~\ref{t:1.3}.

Turning to the second term in~\eqref{e:1.16}, we note that for $v \in \bbR$, 
\begin{equation}
\label{e:1.19}
\rmE_n^+\big(\hat{h}(x)\,\big|\ \hat{h}([x]_k) = v \big)
= v + \rmE_{n-k}\Big(h(x'')\,\big|\ \Omega_{n-k}\big(-v - \tilde{c}_{n,k} \big)\Big) \,,
\end{equation}
with $x'' \in \bbL_{n-k}$ and $\tilde{c}_{n,k} := \mu_n([x]_k) - m_{n-k} = m_{n-l'_n} - m_{n-k} + O(1)$.
Therefore, by Proposition~\ref{l:2.8a}, 
\begin{equation}
\hat{h}([x]_k) \leq \rmE_n^+\big(\hat{h}(x)\,\big|\ \hat{h}([x]_k) \big)
\leq \hat{h}([x]_k) + \big(\hat{h}([x]_k) + \tilde{c}_{n,k})^- + C \,.
\end{equation}

Using that if $X,Y,Z$ are random variables such that a.s. $Y \leq X \leq Y + Z$ and $Z \geq 0$, then $\rmVar\, X \leq \rmVar\, Y + \rmE Z^2 + 2\sqrt{(\rmVar\, Y)\rmE Z^2}$, we thus have
\begin{multline}
\rmVar_n^+ \Big(\rmE_n^+\big(\big(\hat{h}(x) \,\big|\ \hat{h}([x]_{k}) \big)\,\big|\, \hat{h}([x]_{l'_n})=v\Big) \\ \leq 
\rmVar_n^+ \big( \hat{h}([x]_k) \,\big|\,\hat{h}([x]_{l'_n})=v\big) 
+ C\rmE_n^+ \Big(\big(\big(\tilde{c}_{n,k} + \hat{h}([x]_k))^-\big)^2 + 1 
 \,\big|\,\hat{h}([x]_{l'_n})=v\Big)  \\
 + 
C'\sqrt{\rmVar_n^+ \big( \hat{h}([x]_k) \,\big|\,\hat{h}([x]_{l'_n})=v\big) 
\rmE_n^+ \Big(\big(\big(\tilde{c}_{n,k} + \hat{h}([x]_k))^-\big)^2 + 1 
 \,\big|\,\hat{h}([x]_{l'_n})=v\Big) } \,.
\end{multline}
Letting $x''' \in \bbL_{k - l'_n}$, for the mean above we bound
\begin{multline}
	\rmE_n^+ \Big(\big(\big(\tilde{c}_{n,k} + \hat{h}([x]_k))^-\big)^2 
 \,\big|\,\hat{h}([x]_{l'_n}) = v\Big) \leq
	\rmE_n \Big(\big(\big(\tilde{c}_{n,k} + \hat{h}([x]_k))^-\big)^2 
 \,\big|\,\hat{h}([x]_{l'_n}) = v\Big) \\
  = 	\rmE_{n - l_n'} \big(\big(\tilde{c}_{n,k} + v + h(x'''))^-\big)^2 
\leq
C \Big(\big((k-l'_n + v)^-\big)^2 + (k-l'_n) \Big) \exp \bigg(-c\frac{\big((k-l'_n + v)^+\big)^2}{k-l'_n} \bigg) \,,
\end{multline}
where we used that $x'''$ is a centered Gaussian with variance $k-l'_n$ under $\rmP_{n'}$
and that $\tilde{c}_{n,k} \asymp (k-l'_n)$. 
At the same time, by Proposition~\ref{l:2.13},
\begin{equation}
\rmVar_n^+ \big( \hat{h}([x]_k) \,\big|\,\hat{h}([x]_{l'_n}) = v\big) 
= \rmVar_{n-l_n'} \Big( h(x''') \,\big|\Omega_{n-l_n'}(-v +  O(1))\big)\Big)  =  k - l'_n + O \big(v^2 + 1\big) \,.
\end{equation}

Altogether, for the second variance in~\eqref{e:1.16} we have
\begin{multline}
\Big|\rmVar_n^+ \Big(\rmE_n^+\big(\big(\hat{h}(x) \,\big|\ \hat{h}([x]_{k}) \big)\,\big|\, \hat{h}([x]_{l'_n}) = v\Big) \Big) - (k-l'_n) \Big| \\ 
\leq C \bigg(|v|^2 + (k-l'_n)^2 \exp \bigg(-c\frac{\big((k-l'_n + v)^+\big)^2}{k-l'_n} \bigg) + 1\bigg)\,.
\end{multline}

Since $\hat{h}([x]_{l'_n})$ has almost-Gaussian tails under $\rmP_n^+$ by Theorem~\ref{t:1.3}, integrating the right hand side w.r.t $\rmP_n^+ \big(\hat{h}([x]_{l'_n} \in \rmd v\big)$ yields, 
\begin{equation}
\Big|\rmE_n^+ \Big(\rmVar_n^+ \Big(\rmE_n^+\big(\big(\hat{h}(x)\,\big|\ \hat{h}([x]_{k}) \big)\,\big|\, \hat{h}([x]_{l'_n})\Big) \Big) - (k-l'_n)\Big| \leq C < \infty \,.
\end{equation}
Collecting all estimates we get,
\begin{equation}
	\Cov_n^+ \big(\hat{h}(x), \hat{h}(y)\big) = k-l'_n + O(1) = 
		|x\wedge y| -l_n + O(1) \,.
\end{equation}

Suppose now that $k = |x \wedge y| < l'_n$. Setting
\begin{equation} 
Z_n := \rmE_n^+ \big(\hat{h}(x)\,\big|\ \hat{h}([x]_{l'_n})\big) \,,
\end{equation}
for any $M > 0$ we may use the Tower Property of covariance and expectation to write
\begin{equation}
\label{e:6.77a}
\begin{split}
	\Cov_n^+ \big(\hat{h}(x), \hat{h}(y)\big) & = \Var_n^+ \big( \rmE_n^+\big(\hat{h}(x)\,\big|\, \hat{h}([x]_k)\big)\big)  = 
	\Var_n^+ \Big(\rmE_n^+ \big(Z_n\,\big|\, \hat{h}([x]_k)\big)\Big) \\	
	& \leq 2 \Var_n^+ \Big(\rmE_n^+ \big(Z_n \1_{\{|Z_n| \leq M\}}\,\big|\, \hat{h}([x]_k)\big)\Big) + 2 \Var_n^+ \big(Z_n \1_{\{|Z_n| > M\}}\big) \,.
\end{split}
\end{equation}
Now, by Proposition~\ref{l:2.8a} we have
\begin{equation}
\hat{h}([x]_{l'_n}) - C\leq Z_n
\leq \hat{h}([x]_{l'_n})^+ + C \,,
\end{equation}
so that the last variance in~\eqref{e:6.77a} is at most
\begin{equation}
	C \rmE_n^+ \Big(\hat{h}([x]_{l'_n})^2 \1_{\{|\hat{h}([x]_{l'_n})| > M \}}\Big) \,,
\end{equation}
which is itself at most $C \rme^{-cM^{3/2}}$, thanks to the almost-Gaussian tails of $\hat{h}([x]_{l'_n})$ as stated in Theorem~\ref{t:1.3}.

At the same time, the first variance in the second line of~\eqref{e:6.77a} is at most
\begin{equation}	
	\rmE_n^+ \Big(\rmE_n^+ \big(Z_n\1_{\{|Z_n| \leq M\}}\,\big|\ \hat{h}([x]_k) \big) - \rmE_n^+ \big(Z_n\1_{\{|Z_n| \leq M\}}\,\big|\ \hat{h}([x]_k)=0 \big)\Big)^2
	\leq \rmE_n^+ \Big(M^2 \rme^{-c(l'_n - k - |\hat{h}([x]_k)|)^+} \Big) \,,
\end{equation}
where we have appealed to Proposition~\ref{p:4.2} for the process $Y_m = \hat{h}([x]_m)$ for $m \leq l'_n$, which satisfies the requirements in the proposition, thanks to Proposition~\ref{p:4.2a} with $u = m_n$ and $C_0$ chosen large enough.
Again due to the almost-Gaussian tails of $\hat{h}([x]_k)$ by Theorem~\ref{t:1.3}, the last mean is at most $C M^2 \rme^{-c(l'_n-k)}$. Combining this with the bound on the second variance in~\eqref{e:6.77a}, the desired covariance is at most
\begin{equation}
	C\big(\rme^{-cM^{3/2}} + M^2 \rme^{-c(l'_n-k)}\big) \,.
\end{equation}
Taking, e.g., $M = (l'_n-k)$ and recalling that $l'_n - k = l_n - |x \wedge y| + O(1)$, this gives the desired bound.
\end{proof}

\section*{Acknowledgments}
This research was supported through the programme "Research in Pairs"  by the Mathematisches Forschungsinstitut Oberwolfach in 2020 and also partly funded by the Deutsche Forschungsgemeinschaft (DFG, German Research Foundation) under Germany's Excellence Strategy - GZ 2047/1, Projekt-ID 390685813 and GZ 2151 - Project-ID 390873048,
through the Collaborative Research Center 1060 \emph{The Mathematics 
of Emergent Effects}. 
The research of M.F. was also partly supported by a Minerva Fellowship of the Minerva Stiftung Gesellschaft fuer die Forschung mbH. The research of L.H. was supported by the Deutsche Forschungsgemeinschaft (DFG, German Research Foundation)  through Project-ID 233630050 - TRR 146, Project-ID 443891315 within SPP 2265, and Project-ID 446173099.
The research of O.L. was supported by the ISF grant no.~2870/21, and by the BSF award 2018330. 
The authors would like to thank Amir Dembo for useful discussions.

\bibliographystyle{abbrv}
\bibliography{bbm-AL-ref.bib}

\begin{thebibliography}{10}

\bibitem{AidekonBRW13}
E.~A\"{\i}d\'{e}kon.
\newblock Convergence in law of the minimum of a branching random walk.
\newblock {\em Ann. Probab.}, 41(3A):1362--1426, 2013.

\bibitem{BaiH22}
Y.~Bai and L.~Hartung.
\newblock Refined large deviation principle for branching {B}rownian motion
  conditioned to have a low maximum.
\newblock {\em ALEA Lat. Am. J. Probab. Math. Stat.}, 19(1):859--880, 2022.

\bibitem{BisLou16}
M.~Biskup and O.~Louidor.
\newblock Extreme local extrema of two-dimensional discrete {G}aussian free
  field.
\newblock {\em Comm. Math. Phys.}, 345(1):271--304, 2016.

\bibitem{BisLou18}
M.~Biskup and O.~Louidor.
\newblock Full extremal process, cluster law and freezing for the
  two-dimensional discrete {G}aussian free field.
\newblock {\em Adv. Math.}, 330:589--687, 2018.

\bibitem{BisLou20}
M.~Biskup and O.~Louidor.
\newblock Conformal symmetries in the extremal process of two-dimensional
  discrete {G}aussian free field.
\newblock {\em Comm. Math. Phys.}, 375(1):175--235, 2020.

\bibitem{bolthausen2001entropic}
E.~Bolthausen, J.-D. Deuschel, and G.~Giacomin.
\newblock Entropic repulsion and the maximum of the two-dimensional harmonic
  crystal.
\newblock {\em The Annals of Probability}, 29(4):1670--1692, 2001.

\bibitem{bramsondingoferbrw}
M.~Bramson, J.~Ding, and O.~Zeitouni.
\newblock Convergence in law of the maximum of nonlattice branching random
  walk.
\newblock {\em Ann. Inst. Henri Poincar\'{e} Probab. Stat.}, 52(4):1897--1924,
  2016.

\bibitem{BraDiZei16}
M.~{Bramson}, J.~{Ding}, and O.~{Zeitouni}.
\newblock {Convergence in law of the maximum of the two-dimensional discrete
  Gaussian free field}.
\newblock {\em Commun. Pure Appl. Math.}, 69:62--123, 2016.

\bibitem{Chen2018}
X.~Chen and H.~He.
\newblock Lower deviation and moderate deviation probabilities for maximum of a
  branching random walk.
\newblock {\em Ann. Inst. Henri Poincar\'e{} Probab. Stat.}, 56(4):2507--2539,
  2020.

\bibitem{CHM23}
X.~Chen, H.~He, and B.~Mallein.
\newblock Branching {B}rownian motion conditioned on small maximum.
\newblock {\em ALEA Lat. Am. J. Probab. Math. Stat.}, 20(2):905--940, 2023.

\bibitem{CHL17Supp}
A.~Cortines, L.~Hartung, and O.~Louidor.
\newblock The structure of extreme level sets in branching brownian motion:
  Supplementary material (online appendix).
\newblock 2017.

\bibitem{dembo2005survey}
A.~Dembo and T.~Funaki.
\newblock Stochastic interface models.
\newblock In {\em Lectures on Probability Theory and Statistics: Ecole
  d'Et{\'e} de Probabilit{\'e}s de Saint-Flour XXXIII-2003}, pages 103--274.
  Springer, 2005.

\bibitem{Deuschel1996}
J.-D. Deuschel.
\newblock Entropic repulsion of the lattice free field, ii. the 0-boundary
  case.
\newblock {\em Communications in Mathematical Physics}, 181:647--665, 1996.

\bibitem{DRZ17}
J.~Ding, R.~Roy, and O.~Zeitouni.
\newblock Convergence of the centered maximum of log-correlated {G}aussian
  fields.
\newblock {\em Ann. Probab.}, 45(6A):3886--3928, 2017.

\bibitem{giacomin2001survey}
G.~Giacomin.
\newblock Aspects of statistical mechanics of random surfaces.
\newblock {\em Notes of lectures given at IHP, fall}, 2001.

\bibitem{Work2}
L.~{Hartung}, M.~Fels, and O.~{Louidor}.
\newblock Gaussian free field on the tree subject to a hard wall {II}:
  Asymptotics.
\newblock {\em arXiv preprint}.

\bibitem{Herbst1991}
I.~Herbst and L.~Pitt.
\newblock Diffusion equation techniques in stochastic monotonicity and positive
  correlations.
\newblock {\em Probability Theory and Related Fields}, 87:275--312, 1991.

\bibitem{LP19}
G.~Lambert and E.~Paquette.
\newblock The law of large numbers for the maximum of almost {G}aussian
  log-correlated fields coming from random matrices.
\newblock {\em Probab. Theory Related Fields}, 173(1-2):157--209, 2019.

\bibitem{bolthausen2002survey}
T.~Li, editor.
\newblock {\em Proceedings of the {I}nternational {C}ongress of
  {M}athematicians. {V}ol. {III}}. Higher Education Press, Beijing, 2002.
\newblock Invited lectures, Held in Beijing, August 20--28, 2002.

\bibitem{Pitt82}
L.~D. Pitt.
\newblock {Positively Correlated Normal Variables are Associated}.
\newblock {\em The Annals of Probability}, 10(2):496 -- 499, 1982.

\bibitem{Roy_2024}
R.~Roy.
\newblock A branching random walk in the presence of a hard wall.
\newblock {\em Journal of Applied Probability}, 61(1):1--17, 2024.

\bibitem{SZ24}
F.~Schweiger and O.~Zeitouni.
\newblock The maximum of log-correlated {G}aussian fields in random
  environment.
\newblock {\em Comm. Pure Appl. Math.}, 77(5):2778--2859, 2024.

\bibitem{velenik2006survey}
Y.~Velenik.
\newblock Localization and delocalization of random interfaces.
\newblock {\em Probability Surveys}, 3:112--169, 2006.

\end{thebibliography}
\end{document}